\definecolor{gold}{rgb}{0.85,.66,0}
\definecolor{purple}{rgb}{.403,.067,.53}
\def\card{\#}
\providecommand{\Chi}{\raisebox{0.45ex}{\( \chi \)}}
\providecommand{\R}{\mathbb{R}}
\providecommand{\N}{\mathbb{N}}
\newtheorem{theorem}{Theorem}
\newtheorem{definition}{Definition}
\newtheorem{corollary}{Corollary}
\newtheorem{proposition}{Proposition}
\newtheorem{lemma}{Lemma}
\newtheorem{remark}{Remark}
\def\Lip{\mbox{\rm Lip}}\title{Compactness estimates \\ for Hamilton-Jacobi equations depending on space}
\author{
Fabio Ancona\footnote{
Dipartimento di Matematica,
Universit\`a degli Studi di Padova,
Via Trieste 63, 35121 Padova, Italy
} ,
Piermarco Cannarsa\footnote{
Dipartimento di Matematica,
Universit\`a di Roma 'Tor Vergata', 
Via della Ricerca Scientica 1, 00133 Roma, Italy} ,
Khai T. Nguyen\footnote{
Department of Mathematics, Penn State University,
University Park, Pa. 16802, U.S.A.}
}
\date{\vspace{5pt}{\small (Dedicated to Prof. Tai Ping Liu in the occasion of his $70^{th}$ birthday)}
\\
\vspace{25pt}
March 31, 2015}
\begin{document}
\maketitle
\begin{abstract}
We study quantitative  estimates of compactness in $\mathbf{W}^{1,1}_{loc}$
for the map $S_t$, $t>0$
that associates to every given initial data $u_0\in \Lip (\mathbb{R}^N)$ the corresponding solution
$S_t u_0$ of  a  Hamilton-Jacobi equation
$$
u_t+H\big(x, \nabla_{\!x}  u\big)=0\,,
\qquad t\geq 0,\quad x\in \mathbb{R}^N,
$$
with a  convex and coercive Hamiltonian $H=H(x,p)$.
We provide upper and lower bounds of order $1/\varepsilon^N$
on the the Kolmogorov $\varepsilon$-entropy in $\mathbf{W}^{1,1}$ of the image through the map $S_t$
of sets of bounded, compactly supported initial data.
Quantitative estimates of compactness, as suggested  by P.D. Lax~\cite{Lax02},
could provide a measure of the order of ``resolution'' and of 
``complexity'' of a numerical method implemented for this equation.
We establish these estimates deriving accurate a-priori bounds
on the Lipschitz, semiconcavity and semiconvexity constant of a viscosity solution
when the initial data is semiconvex.
The derivation of a small time controllability result is also fundamental to
establish the lower bounds on the $\varepsilon$-entropy.
\end{abstract}
%
%
%
%
\section{Introduction}
The theory of viscosity solutions to first-order 
Hamilton-Jacobi equations of the form 
\begin{equation}
\label{HJ}
u_t(t,x)+H\big(x,\nabla_{\!x}  u(t,x)\big)=0\,,
\qquad t\geq 0,\quad x\in \mathbb{R}^N,
\end{equation}
provides existence, uniqueness and stability results.
The concept of viscosity solution was introduced
by  M.G. Crandall and P.-L. Lions  in~\cite{CL} 
to cope with the lack of classical (smooth) solutions of the Cauchy problem for~\eqref{HJ}
 globally defined in time. In fact, for such equations 
singularities in the gradient of the solution may arise in finite time, no matter how smooth the initial datum
\begin{equation}
\label{in-data}
u(0,\cdot)= u_0
\end{equation}
is assumed to be. 
We refer to~\cite{BCD} for a review of the notion of viscosity solution and the related theory for equation
of type~\eqref{HJ}.
In the case where the Hamiltonian $H(x,p)$
is smooth in both variables 
and convex in the $p$-variable, the viscosity solution $u(t,x)$ of~\eqref{HJ}-\eqref{in-data}, 
with initial datum $u_0:\mathbb{R}^N\to\mathbb{R}$
Lipschitz continuous, can be represented as the value function of a classical problem 
in the calculus of variation: 
\begin{equation}\label{Value-function}
u(t,x)= \min_{\xi\in AC([0,t],\mathbb{R}^N)}\bigg\lbrace{u_{0}(\xi(0))+\int_{0}^{t}L(\xi(s),\dot{\xi}(s))ds\ \Big|\ \xi(t)=x\ \bigg\rbrace},
\end{equation}
where $AC([0,t],\mathbb{R}^{N})$ is the class of absolutely continuous functions from $[0,t]$ to $\mathbb{R}^{n}$
and $L$ denotes the Legendre transform of  $H$ with respect to the second group of variables:
\begin{equation}\label{Legendre-L}
L(x,q)\ \doteq \ \sup_{p\in\mathbb{R}^N}\big\{p\cdot q-H(x,p)\big\}\qquad \forall~x,q\in\mathbb{R}^N.
\end{equation}
Under appropriate regularity assumptions 
on the map $x\mapsto H(x,p)$,
this fact implies that $u(t,x)$ is
locally semiconcave in $x$,
which in turn ensures that $u(t,\cdot)$ is almost everywhere
twice differentiable and that $\nabla_{\!x} u(t,\cdot)$ has locally bounded variation
($\nabla_{\!x} u(t,\cdot)\in \text{BV}_{loc}$),
i.e. that the distributional Hessian $D^2_x u(t,\cdot)$ is a symmetric matrix of Radon measures.

There is a vast literature concerning the structure and the regularity of the gradient 
of a viscosity solution to~\eqref{HJ}, see for example
\cite{BT, BDR, CF, CMS, CSo, Fl}.
Instead, in  this paper we are interested in analyzing the regularizing effect of the whole
semigroup map
$$
S_t: \Lip_{loc} (\mathbb{R}^N)\to \Lip_{loc}(\mathbb{R}^N), \quad  t>0
$$
that associates to every  initial data $u_0\in \Lip (\mathbb{R}^N)$
the unique viscosity solution $S_t u_0 \doteq u(t,\cdot)$ of the corresponding Cauchy problem~\eqref{HJ}-\eqref{in-data},
evaluated at time $t$.
Namely, 
for $\mathbf{W}^{1,\infty}$-bounded subsets $\mathcal{L}$ of $\Lip (\mathbb{R}^N)$ of the form
\begin{equation}
\label{CLM-1}
\mathcal{L}_{[R,M]}\doteq
\Big\lbrace{u_0\in \Lip(\mathbb{R}^N)\ \big|\ \mathrm{supp}(u_0)\subset [-R,R]^N\,,\; 
\Lip[u_0]\leqslant M\Big\rbrace},
\end{equation}
the semiconcavity constant of $S_t u_0$, $u_0\in \mathcal{L}$, on every  bounded subset $\Omega\subset \R^N$,
depends only on $\Omega$, $t$ and $\mathcal{L}$.
Hence, thanks to the local uniform semiconcavity of $S_t(\mathcal{L})$,
applying Helly's compactness theorem and a Poincar\'e inequality for BV-functions,
one can show that the image set $S_t(\mathcal{L})$ is compact  with respect to
the  $\mathbf{W}^{1,1}_{loc}$-topology. This property reflects the irreversibility
feature of the equation~\eqref{HJ} when the Hamiltonian $H(x,p)$ is convex in the 
$p$-variable. 
Here, we are concerned with the compactifying effect of the map $S_t$ when the space $\Lip(\mathbb{R}^N)$
is endowed with the $\mathbf{W}^{1,1}_{loc}$-topology, rather than the classical  $\mathbf{L}^\infty$-topology,
having in mind the $\mathbf{L}^1$-stability theory 
and the $\mathbf{L}^1$-error estimates 
established for approximate solutions of Hamilton-Jacobi equations~\cite{Lin-Tadmor},
which turn out to be sharper than  the $\mathbf{L}^\infty$ ones.

Inspired by a question posed by P.D. Lax~\cite{Lax02} within the context of conservation laws,
we employed in~\cite{APN} the concept of Kolmogorov $\varepsilon$-entropy to provide a quantitative estimate 
of this regularizing effect of the semigroup map in the case where the Hamiltonian $H=H(\nabla_{\!x}  u)$ is a convex
function depending only on the spatial gradient of the solution.
We recall the notion of $\varepsilon$-entropy introduced by A.~Kolmogorov~\cite{KT}:
\begin{definition}\label{DefKE}
Let $(X,d)$ be a metric space and let $K$ be a totally bounded subset of $X$. For $\varepsilon>0$, let $\mathcal{N}_{\varepsilon}(K|X)$  be the minimal number of sets in a cover of $K$ by subsets of $X$ having diameter no larger than $2\varepsilon$. Then the $\varepsilon$-entropy of $K$ is defined as
\begin{equation} \nonumber
\mathcal H_{\varepsilon}(K|X) \doteq \log_{2} \mathcal{N}_{\varepsilon}(K|X).
\end{equation}
Throughout the paper, we will call {\em $\varepsilon$-cover} a cover of $K$ by subsets of $X$ having diameter no larger than $2\varepsilon$.
\end{definition}

Actually, since in general $S_t u_0$, $u_0\in\mathcal{L}$,  is not an element of $\mathbf{W}^{1,1}(\R^N)$,
we have analyzed in~\cite{APN} the Kolmogorov entropy of the translated set $S_t(\mathcal{L})-S_t \,0$
which is a subset of $\mathbf{W}^{1,1}(\R^N)$.
The main result of the present paper extends the estimates
on the Kolmogorov entropy established in~\cite{APN}
to the semigroup map generated by~\eqref{HJ},
for Hamiltonians satisfying
the \textbf{Standing Assumptions}:
\begin{itemize}
%
\item[{\bf (H1)}] $H\in C^2(\mathbb{R}^N\times \mathbb{R}^N)$  is 
a coercive and convex
map
with respect to the  second group of variables, i.e. it satisfies
%
\begin{equation}
\label{H-coercive}
\lim_{|p|\rightarrow\infty}\frac{H(x,p)}{|p|}=+\infty
\qquad\ \forall~x\in\mathbb{R}^N,
\end{equation}
\begin{equation}
\label{convex-H}
%
D^2_p H(x,p)>0
\qquad\ \forall~x, p\in\mathbb{R}^N,
\end{equation}
where $D^2_p H(x,p)$ denotes the Hessian of $H$ with respect to the  $p$ variables
and the inequality is understood in the sense that $D^2_p H(x,p)$
is a positive definite matrix. 
\item[{\bf (H2)}] $H$ and its gradient 
satisfy the inequalities:
\begin{equation}
\label{subl-H2}
\begin{aligned}
H(x,p)&\geqslant -c_1\big(1+|x|\big)
\\
\noalign{\medskip}
\big\langle p, D_p H(x,p)\big\rangle - H(x,p) &\geqslant c_2\big|D_p H(x,p)\big|^\alpha -c_3
\\
\noalign{\medskip}
\big|D_x H(x,p)\big|&\leqslant c_4 \big|D_p H(x,p)\big|^\alpha +c_5
\end{aligned}
 \qquad\quad\forall~x,p\in\mathbb{R}^N\,,
\end{equation}
for some constants $c_1, c_3, c_4, c_5\geq 0$, $c_2>0$ and $\alpha>1$. 
\end{itemize}
%
In fact, we shall provide upper bounds on the Kolmogorov entropy 
of $S_t(\mathcal{L}_{[R,M]})-S_t \,0$ at any time $t>0$ and lower bounds
for times $t$ smaller that a quantity depending on $R, M$.
Specifically, we prove the following
\begin{theorem}
\label{upper-lower-estimate}
Let $H:\R^N\times\R^N\to \R$ be  a function satisfying  the assumptions~{\bf (H1)-(H2)}
and $\{S_t\}_{t\geqslant 0}$ be the semigroup of viscosity solutions generated by~\eqref{HJ}
on the domain $\Lip (\mathbb{R}^N)$. 
Then, given  $R,M>0$,  
letting $\mathcal{L}_{[R,M]}$ be the set defined in~\eqref{CLM-1}
the following hold.
\begin{enumerate}  
\item[(i)] 
For any $T>0$ and for every $\varepsilon>0$ sufficiently small,
one has
\begin{equation}
\label{Upper-est-H}
\mathcal{H}_{\epsilon}\Big(S_T(\mathcal{L}_{[R,M]})-S_T \,0\ \big|\ \mathbf{W}^{1,1}(\mathbb{R}^N)\Big)\leqslant \Gamma^+_{[R,M,N,T]}\cdot\frac{1}{\varepsilon^N},
\end{equation} 
with 
\begin{align}
\label{G+def}
\Gamma^+_{[R,M,N,T]}&\doteq 
 \omega_N^N \cdot \Big(4N\cdot\big(1+ \mu_T+(\kappa_{\scriptscriptstyle{T}}+1)l_T\big)\Big)^{\!4N^2},
\end{align}
where  $\omega_N$ denotes the Lebesgue measure of the unit ball of $\mathbb{R}^N$
and $l_T,\,\mu_T,\kappa_T$  are constants depending on $R,M,N,T$
defined in~\eqref{lt-def}, \eqref{muT-def}, \eqref{kappaT-def}. 
\item[(ii)] 
For any $0<T<\tau_{\!_{R,M}}$ and for every $\varepsilon>0$ sufficiently small,
one has
\begin{equation}
\label{Lower-est-H}
\mathcal{H}_{\epsilon}\Big(S_T(\mathcal{L}_{[R,M]})-S_T \,0\ \big|\ \mathbf{W}^{1,1}(\mathbb{R}^N)\Big)\geqslant \Gamma^-_{[R,M,N,T]}\cdot\frac{1}{\varepsilon^N},
\end{equation} 
with 
\begin{align}
\label{G-def}
\Gamma^-_{[R,M,N,T]}&\doteq 
\frac{1}{8\cdot\ln 2}\cdot \bigg(\frac{K_{\!_{R,M}}\, \omega_N\, r_{\!\!_R}^{N+1}}{48(N+1)\,2^{N+1}}\bigg)^{\!\!N}
\end{align}
where $\tau_{\!_{R,M}}, r_{\!\!_R}, K_{\!_{R,M}}$ are 
constants depending on $R,M,N$ defined as in Section~\ref{concl-proof-mainthm}.
\end{enumerate}
\end{theorem}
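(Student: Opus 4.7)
My plan is to treat the upper and lower bounds separately, using the quantitative regularity of $S_T$ for (i) and a small-time controllability construction for (ii).

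For (i), I would first prove quantitative a priori estimates on any $u_0 \in \mathcal{L}_{[R,M]}$: a Lipschitz bound $\Lip[S_T u_0]\le l_T$ from the Hopf--Lax representation~\eqref{Value-function} combined with the superlinearity in~\eqref{subl-H2}; a uniform semiconcavity constant $\kappa_T$ obtained by differentiating the value function twice along optimal curves; and a support bound $\{S_T u_0 - S_T 0 \ne 0\} \subset [-\mu_T,\mu_T]^N$ by finite propagation speed (outside this box the optimal trajectories cannot reach $[-R,R]^N$ before time $T$). With these estimates in hand, I would partition $[-\mu_T,\mu_T]^N$ into subcubes of side $\sim \varepsilon$ and discretize the gradient of $w \doteq S_T u_0 - S_T 0$ on each subcube at resolution $\sim \varepsilon$. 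Uniform semiconcavity then controls the oscillation of $\nabla w$ within each subcube, so two $w$'s sharing a discretization are $O(\varepsilon)$-close in $\mathbf{W}^{1,1}$, while semiconcavity forces the admissible discrete gradient fields to behave like monotone-in-a-cone increments across neighbouring subcubes. Counting those admissible configurations gives $C^{(\mu_T/\varepsilon)^N}$ possibilities; taking $\log_2$ and tracking the explicit dependence on $l_T,\mu_T,\kappa_T$ produces $\Gamma^+_{[R,M,N,T]}/\varepsilon^N$.

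For (ii), the core ingredient is a small-time controllability result: for $0<T<\tau_{R,M}$ and any ``target'' $v$ in a suitable class of Lipschitz, semiconcave functions supported in $[-r_R,r_R]^N$ with amplitude bounded by $K_{R,M}$, there exists $u_0 \in \mathcal{L}_{[R,M]}$ with $S_T u_0 = v$. This is proved by inverting the forward characteristic flow: for $T$ small enough the map $x \mapsto x - T\, D_p H(x, \nabla v(x))$ is a bi-Lipschitz homeomorphism onto its image (thanks to the strict convexity~\eqref{convex-H} and $C^2$ regularity of $H$), so $u_0$ is recovered along characteristics by the Bellman relation $u_0(y(x)) = v(x) - \int_0^T L(\xi(s),\dot\xi(s))\,ds$. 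Using controllability I would then construct a separated family: on a regular $\varepsilon$-grid containing $N_\varepsilon \sim (r_R/\varepsilon)^N$ cells in $[-r_R,r_R]^N$, independently place or omit a tent bump of height $\sim K_{R,M}\varepsilon$ on each cell, producing targets $\{v_\sigma\}_{\sigma \in \{0,1\}^{N_\varepsilon}}$ with $\|v_\sigma - v_{\sigma'}\|_{\mathbf{W}^{1,1}} \gtrsim \varepsilon\, d_H(\sigma,\sigma')$. A Hamming-ball counting argument (any single set of diameter $2\varepsilon$ can contain only $\sigma$'s in a Hamming ball of bounded radius) then forces any $\varepsilon$-cover to have cardinality at least $2^{c N_\varepsilon}$, yielding $\Gamma^-_{[R,M,N,T]}/\varepsilon^N$.

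The main technical obstacle will be the small-time controllability step: pulling back a prescribed target $v$ to an initial datum $u_0$ that \emph{simultaneously} solves $S_T u_0 = v$ and lies in $\mathcal{L}_{[R,M]}$ (support in $[-R,R]^N$, Lipschitz constant $\le M$) requires a quantitative inversion of the forward flow and careful tracking of constants to identify the admissible ranges $\tau_{R,M}, r_R, K_{R,M}$. Moreover, since the targets $v$ to be controlled are only semiconcave, $\nabla v$ is at best a BV vector field, so the backward flow must be handled in a measure-theoretic sense while preserving the uniform Lipschitz and support bounds required to stay inside $\mathcal{L}_{[R,M]}$. Once this step is carried out, the combinatorial Hamming-ball counting completing the lower bound, and the semiconcavity-based covering completing the upper bound, are both standard.
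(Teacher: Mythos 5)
Your outline for part (i) matches the paper in substance: derive uniform support, Lipschitz, and semiconcavity bounds for $S_T u_0$ via the value-function representation~\eqref{Value-function} and bounds on minimizers, then count an $\varepsilon$-cover. Two remarks. First, the paper does not re-derive the covering estimate for semiconcave classes from scratch; it cites Theorem~\ref{estSC}-(i), proved in~\cite{APN}, whereas you propose to redo the ``monotone-in-a-cone'' gradient counting. Second, you have swapped the roles of $l_T$ and $\mu_T$: in the paper $l_T$ is the support radius and $\mu_T$ the Lipschitz constant, and these enter $\Gamma^+$ differently, so careless bookkeeping here would break the final formula.

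Part (ii) has genuine gaps that your proposal flags but does not resolve. You correctly sense the obstruction: if the target $v$ is merely semiconcave, $\nabla v$ is only BV, and a pointwise flow inversion $x\mapsto x-T\,D_p H(x,\nabla v(x))$ is not well-defined, let alone bi-Lipschitz. The paper's resolution has two ingredients you are missing. (a) One does \emph{not} need to control arbitrary semiconcave targets: the lower entropy bound in Theorem~\ref{estSC}-(ii) is already achieved by the \emph{$C^1$} subclass $\mathcal{SC}^{\psi,1}_{[R,M,K]}$ (Remark~\ref{SC-low-entr}), so controllability is established only for $C^1$ semiconcave data matching $S_T 0$ on the boundary (Proposition~\ref{control-part}). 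This completely sidesteps the BV issue. (b) Even for $C^1$ targets, the backward Cauchy problem for~\eqref{HJ} is ill-posed in general, so one cannot ``pull back along characteristics'' without justification; the paper first proves small-time \emph{semiconvexity preservation} (Proposition~\ref{le:semiconvexity}), which, combined with the semiconcavity from Proposition~\ref{Properties of u}, yields that the time-reversed solution is a genuine $C^1$ classical solution on a fixed box, and only then does it run the backward solve. Your naive first-order map also ignores that when $H$ depends on $x$ the characteristics are solutions of the full Hamiltonian system~\eqref{hamilt-syst} with curved trajectories and a coupled costate, not straight lines. Finally, you state the controllability target as $S_T u_0=v$ globally, but the constraints $\mathrm{supp}(u_0)\subset[-R,R]^N$ and $\Lip[u_0]\le M$ require a nontrivial truncation step: the paper constructs $u_0^\sharp$ by cutting off the backward-solved datum on a region bounded by backward-flowed hypersurfaces and verifies, case by case, that forward minimizers from the truncated datum reproduce the target on $[-r,r]^N$ and $S_\tau 0$ outside (Steps~3--4 of Proposition~\ref{control-part}). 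This matching is not automatic and is where $r_{\!R}$, $m_{R,M}$, $\tau_{R,M}$ are pinned down. Without (a), (b), and this truncation, the lower bound does not go through.
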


\noindent
Since the upper and lower bounds on the $\varepsilon$-entropy in ${\bf W}^{1,1}$ of $S_t(\mathcal{L}_{[R,M]})-S_t \,0$
are both of order $1/\varepsilon^N$, we deduce that, for Hamiltonians satisfying the assumptions {\bf (H1), (H2)},
such an $\varepsilon$-entropy is of the same
size $\approx 1/\varepsilon^N$ established in~\cite{APN}  
for Hamiltonians not depending on the space variable.
Entropy numbers play a central role in various areas of information theory and statistics
as well as of ergodic and learning theory. In the present setting, this concept  could 
provide a measure of the order of ``resolution'' and of the ``complexity'' of a numerical scheme, as suggested in~\cite{Lax78,Lax02}.
Roughly speaking, the order of magnitude of the $\varepsilon$-entropy
should indicate the minimum number of operations that one should perform 
in order to obtain an approximate solution with a precision of order $\varepsilon$
with respect to the considered topology.

\begin{remark}\rm
\label{assumpt-thm1}
Because of the assumption  {\bf (H1)}, for any given $x,q\in\R^N$ there exists some point $p_{x,q}$
where the supremum in~\eqref{Legendre-L} is attained (cfr.~\cite[Appendix A.2]{CS}).
Thus, in particular, one finds that 
\begin{equation}
\label{L1-id}
L(x,0)=-H(x,p_{x}),
\end{equation}
for some $p_x\in\R^n$,
and
\begin{equation}
\label{L2-id}
\begin{aligned}
q&= D_p H(x,p_{x,q}),
\\
\noalign{\smallskip}
L(x,q)&=\langle p_{x,q}, q\rangle - H(x,p_{x,q}),
\\
\noalign{\smallskip}
D_x L(x,q)&= - D_x H(x,p_{x,q}),
\end{aligned}
\end{equation}
%
for some $p_{x,q}\in \R^N$.
Hence, relying on the inequalities of the assumption  {\bf (H2)}, one can show that
\begin{align*}
L(x,0)&\leqslant c_1(1+|x|)\qquad\forall~x\in\mathbb{R}^N\,,
\\
\noalign{\medskip}
L(x,q)&\geqslant c_2 |q|^\alpha - c_3\qquad\forall~x,q\in\mathbb{R}^N\,,
\\
\noalign{\medskip}
\big|D_x L(x,q)\big|&\leqslant c_4 |q|^\alpha +c_5
\qquad\forall~x,q\in\mathbb{R}^N\,.
\end{align*}
%
These uniform bounds on the Legendre transform of $H$ are fundamental to provide an estimate on the size of the support of 
the map $x\mapsto S_t u_0(x)- S_t \,0\, (x)$, when $u_0$ varies in 
a set $\mathcal{L}_{[R,M]}$ as in~\eqref{CLM-1}, as well as to derive a-priori bounds on the
minimizers for~\eqref{Value-function}.
The assumptions {\bf (H1)}-{\bf (H2)} are verified by a large class of Hamiltonians $H(x,p)$
convex in the $p$-variable. For example, if we consider 
\begin{equation}
\nonumber
H(x,p)=f(x)\big(1+|p|^2\big)^{m} + g(x),
\end{equation}
where $m> 1/2$ 
and $f,g \in C^2(\mathbb{R}^N)$ are such that
\begin{equation}
\nonumber
0< f(x)\leqslant c_f,
\qquad - c_g(1+|x|)\leqslant g(x)\leqslant c_g \qquad\forall~x\in\mathbb{R}^N\,,
\end{equation}
for some $c_f, c_g>0$, it is straightforward to verify that $H$ satisfies  {\bf (H1)}-{\bf (H2)}
for $\alpha=\frac{2m}{2m-1}$.
On the other hand, one can easily check that the assumption {\bf (H2)} is  certainly fulfilled,
in particular, by the Hamiltonians that satisfy {\bf (H1)} together with the (stronger) bounds
%
%
%
\begin{equation}
\label{subl-H2prime}
\begin{aligned}
-c'_{1}(&1+|x|)\leqslant H(x,0)\leqslant c'_1
\\
\noalign{\medskip}
&\big|D_p H(x,p)\big|^\alpha\leqslant c'_2 (1+|p|^2)
\end{aligned}
 \qquad\quad\forall~x,p\in\mathbb{R}^N\,,
\end{equation}
for some constants $c'_1, c'_2>0$ and $\alpha>1$.
\end{remark}
%

The key step of the proof of Theorem~\ref{upper-lower-estimate}-(i) consists in deriving accurate estimates 
on the size of the support $\omega_T(u_0)\doteq  \mathrm{Supp}(S_T u_0-S_T \,0)$
of the map $x\mapsto S_T u_0(x)- S_T \,0\, (x)$
and on the semiconcavity constant of $S_T u_0$ on $\omega_T(u_0)$,
when $u_0\in \mathcal{L}_{[R,M]}$. Notice that,
since the Hamiltonian depends on the space variable, we cannot employ the explicit
Hopf-Lax representation formula for the solutions of an Hamilton-Jacobi equation as in~\cite{APN}.
Instead, we shall obtain these estimates relying on the representation~\eqref{Value-function} 
of a solution to~\eqref{HJ} as as the value function of a Bolza problem in the calculus of variations 
and performing a careful analysis of the behaviour of the solution along the corresponding minimizers.
Thanks to such a-priori bounds, one then recovers the upper estimate~\eqref{Upper-est-H}
invoking a similar estimate established in~\cite{APN} for the Kolmogorov entropy of a class $\mathcal{SC}_{[K]}$
of semiconcave functions with semiconcavity constant $K$ defined 
on a bounded domain.
\smallskip

The proof of Theorem~\ref{upper-lower-estimate}-(ii), as in~\cite{APN}, is based on a controllability type
result for the elements of the class $\mathcal{SC}_{[K]}$. Namely, we show that, 
for times $T>0$ sufficiently small
and for some constant $K$ depending on $R,M$,
every element of $\mathcal{SC}_{[K]}$
which coincides with $S_T \,0$ outside a bounded domain
can be obtained
as the value $u(T,\cdot)$ of a viscosity solution of~\eqref{HJ} with initial data in 
$\mathcal{L}_{[R,M]}$.
Notice that $S_T \,0$ is in general not a smooth function. Therefore, to establish such
a controllability property one cannot expect to produce smooth solutions on the whole domain $[0,T]\times \R^N$
that attain at time $T$ the desired profile.
However, we shall achieve this result relying on a fine analysis of the backward and forward
minimizers of a local smooth solution of~\eqref{HJ}
and performing accurate estimates on the semiconcavity and semiconvexity costants
of a viscosity solution of~\eqref{HJ}. 
In turn, this result yields the lower bound~\eqref{Lower-est-H} invoking the same type of estimates
provided in~\cite{APN} for the Kolmogorov entropy of  $\mathcal{SC}_{[K]}$.
It remains open the question wether a global (in time) controllability property for
semiconcave functions hold for Hamilton-Jacobi equations with Hamitonian depending
on space (cfr. remark~\ref{control-prop1} in Section~\ref{concl-proof-mainthm}).

\medskip

The paper is organized as follows. In Section~\ref{sec:prelim}, we collect  preliminary results and definitions
concerning semiconcave functions and Hamilton-Jacobi equations, as well as the quantitative
compactness estimates
on classes of semiconcave functions established in~\cite{APN}. 
In Section~\ref{sec:up-ext} we derive local a-priori bounds on the Lipschitz and
semiconcavity constant of a viscosity solution to~\eqref{HJ}, which then yield 
the upper bound stated in Theorem~\ref{upper-lower-estimate}-$(i)$. 
In the first part of Section~\ref{sec:low-est} we  provide local a-priori bounds on the 
semiconvexity constant of a viscosity solution to~\eqref{HJ} when
the initial data is semiconvex.
Next, we establish a local controllability result for a class of semiconcave
functions, which allows us to obtain
the lower bound stated in Theorem~\ref{upper-lower-estimate}-$(ii)$.
\medskip

\section{Notation and preliminaries}
\label{sec:prelim}

Let $N\geqslant 1$ be an integer. Throughout the paper we shall denote by:
\begin{itemize}
\item $|\cdot|$ the Euclidean norm in $\R^N$,
\item $\langle\cdot,\cdot\rangle$ the Euclidean inner product in $\R^N$,  
\item $[x,y]$ the segment joining two points $x,y\in \mathbb{R}^N$,
\item $B(x_0,r)$  the  open ball of $\R^N$ with radius $r >0$ and centered at $x_0$,
\item $\card (S)$ the number of elements of any finite set $S$,
\item $\mathrm{Vol}(D)$ the Lebesgue measure of a measurable set $D\subset \R^N$,
\item $\omega_N :=\mathrm{Vol}(B(0,1))=\frac{\pi^{N/2}}{\Gamma(N/2+1)}$  the Lebesgue measure of the unit ball of $\mathbb{R}^N$,
\item $\|A\|$ the usual operator norm of the $N\times N$ matrix $A$,
\item $\Lip (\R^N)$ the space of all Lipschitz continuous functions $f:\R^N\to \R$, and by $\Lip[f]$ the Lipschitz seminorm of $f$,
while $\Lip[f; \Omega]$ denotes the Lipschitz seminorm of the restriction of $f$ to a domain $\Omega\subset\R^N$,
\item $\mathrm{supp}(u)$ the support of $u\in\Lip (\R^N)$, that is, the closure of $\big\{x\in\R^N~|~u(x)\neq 0\big\}$,
\item $AC([a,b],\mathbb{R}^{N})$, with $[a,b]$ interval of $\R$, the space of all absolutely continuous functions from $[a,b]$ to~$\mathbb{R}^{N}$,
\item $\mathbf{L}^{1}(D)$, with $D\subset\R^N$  a  measurable set, the Lebesgue space of all (equivalence classes of) summable functions on $D$, equipped with the usual norm $\|\cdot\|_{\mathbf{L}^{1}(D)}$, 
\item $\mathbf{L}^{\infty}(D)$, with $D\subset\R^N$  a  measurable set,  the space of all essentially bounded functions on~$D$,  and by $\|u\|_{\mathbf{L}^{\infty}(D)}$ the essential supremum of a function $u\in \mathbf{L}^{\infty}(D)$ (we shall use the same symbol in case $u$ is vector-valued), 
\item $\mathbf{W}^{1,1}\big(\Omega)$, with $\Omega$ a convex domain in $\R^N$, 
the  Sobolev space of functions with summable first order distributional derivatives, and by $\|\cdot\|_{\mathbf{W}^{1,1}(\Omega)}$ its norm,
\item $\mathbf{W}^{1,1}_0\big(\Omega)$, with $\Omega$ a convex domain in $\R^N$, 
the  Sobolev space of functions $F\in \mathbf{W}^{1,1}\big(\Omega)$ with zero trace on the boundary $\partial \Omega$,
\item $BV(\Omega,\R^N)$, with $\Omega$ a domain in $\R^N$, the space of all vector-valued functions $F:\Omega\to\R^N$ of bounded variation (that is, all $F\in \mathbf{L}^1(\Omega,\R^N)$ such that the first partial derivatives of $F$ in the sense of distributions are  measures with finite total variation in $\Omega$).
\end{itemize}
Moreover $\lfloor a\rfloor:=\max\{z\in \mathbb{Z}\, | x\leq a\}$  denotes the integer part of $a$.
\medskip

\subsection{Generalized gradients and semiconcave functions}
\label{subsec:semiconc}
 We shall adopt the notation $Du$
for the distributional gradient of a continuous function~$u$.
A notion of generalized differentials that specially fits  viscosity solutions is recalled in the following
\begin{definition}
\label{general-grad}
Let $u:\Omega\rightarrow\mathbb{R}$, with $\Omega\subseteq\mathbb{R}^N$ open. For every $x\in\Omega$, the sets
\begin{equation}
\label{sup-sub-diff}
\begin{aligned}
D^+u(x)
&:=\bigg\lbrace{p\in\mathbb{R}^N\ |\ \limsup_{y\rightarrow x}\frac{u(y)-u(x)-\langle p,y-x\rangle}{|y-x|}\leqslant 0 \bigg\rbrace},
\\
\noalign{\medskip}
D^-u(x)
&:=\bigg\lbrace{p\in\mathbb{R}^N\ |\ \liminf_{y\rightarrow x}\frac{u(y)-u(x)-\langle p,y-x\rangle}{|y-x|}\geqslant 0 \bigg\rbrace},
\end{aligned}
\end{equation}
are called, respectively, the $D$-\textit{superdifferential} 
and the $D$-\textit{subdifferential} 
of $u$ at $x$. Moreover, 
\begin{equation}
\label{reach-grad}
D^*u(x):=\Big\lbrace{p=\lim_{k\rightarrow\infty}\nabla u(x_k)\ |\ f\ \mathrm{is\ differentiable\ at}\ x_k\ \mathrm{and}\ x_k\rightarrow x\Big\rbrace},
\end{equation}
is called the set of reachable gradients of $u$ at $x$.
\end{definition}
\noindent
From  definition~\eqref{sup-sub-diff} it follows that there holds
\begin{equation}
\label{sup-sub-diff-equal}
D^-u(x)=-D^+(-u)(x)\qquad\quad\forall~x\in\Omega.
\end{equation}
\begin{remark}
\label{clarke-grad}
\rm
\label{semiconc-prop}
When $u$ is locally Lipschitz in $\Omega$, $D^*u(x)$ is a nonempty compact set for every $x\in\Omega$. Moreover, if $L$ is a Lipschitz constant for $u$ on a neighborhood of $x$, then we have that
\begin{equation*}
|p|\leqslant L\qquad \forall\,p\in D^*u(x).
\end{equation*}
In this case, the convex hull co$\,D^*u(x)$ gives Clarke's generalized gradient, $\partial u(x)$, see \cite{C}. Consequently, there also holds
\begin{equation}
\label{eq:gd}
|p|\leqslant L\qquad \forall\,p\in \partial u(x).
\end{equation}
On the other hand, if $u$ is semiconvex then one has $\partial u(x)=D^-u(x)$.
 \end{remark}

We collect below some basic definitions and properties of semiconcave  functions in~$\R^N$ that will be used 
in the paper. We refer the reader to~\cite{CS} for a comprehensive introduction to the corresponding theory.
\begin{definition}\label{SCL} 
A continuous function $u:\Omega\rightarrow \mathbb{R}$, with $\Omega\subset\mathbb{R}^N$,
 is called {\em semiconcave} if there exists $K>0$ such that 
\begin{equation}\label{eq:semiconcave}
u(x+h)+u(x-h)-2u(x)\leqslant K|h|^2,
\end{equation}
for all $x,h\in\R^N$ such that $[x-h,x+h]\subset \Omega$. 
When this property holds true, we also say that $u$ is {\em semiconcave in $\Omega$ with constant}  $K$, and call $K$ a {\em semiconcavity constant} for~$u$.  
\begin{itemize}
\item[-]We say that $u$ is {\em semiconvex}  with constant $K$ if $-u$ is semiconcave with constant $K$.
\item[-]We say that $u:\Omega\rightarrow \mathbb{R}$, with $\Omega\subset\mathbb{R}^N$ open, 
is {\em locally semiconcave} (or locally semiconvex) if 
$u$ is semiconcave (semiconvex) in every compact set $A \subset\subset\Omega$.
\end{itemize}
\end{definition}
\begin{remark}
\rm
\label{semiconc-prop-2}
The notion of semiconcavity introduced here is the most commonly used in the literature,
often denoted as {\it linear semiconcavity}.
A more general definition of semiconcavity can be found in~\cite{CS}.
 It is easy to see that a function $u$  is semiconcave (semiconvex) in $\Omega$ with  constant $K>0$ if any only if the function 
 $$\widetilde{u}(x)\doteq u(x)-\frac{K}{2}|x|^2\qquad \bigg(\widetilde{u}(x)\doteq u(x)+\frac{K}{2}|x|^2\bigg),  \qquad x\in\Omega$$ 
 is concave (convex).
 \end{remark}

Semiconcave functions and their
superdifferential enjoy the properties stated 
in the following (see~\cite[Theorem 2.31, Proposition~3.3.1, Proposition~3.3.4, Theorem~3.3.6, Proposition~3.3.10]{CS})
\begin{theorem}
\label{Pro-Semi}
Let $\Omega\subseteq\mathbb{R}^N$ be open
and $u:\Omega\rightarrow \mathbb{R}$ be semiconcave with semiconcavity constant~$K$. Then, the following properties hold true.
\begin{enumerate}
\item [(i)] u is Lipschitz continuous and almost everywhere 
differentiable.
\item [(ii)] The superdifferential $D^{+}u(x)$ is a compact, convex,
nonempty set for  all $x\in\Omega$.
\item [(iii)] $D^{+}u(x)=\mathrm{co}\,D^{*}u(x)$ for  all $x\in\Omega$, where {\em co} stands for the convex hull.
\item [(iv)] $D^{+}u(x)$ is a singleton if and only if $u$ is differentiable at $x$.
\item[(v)] If $D^{+}u(x)$ is a singleton for every $x\in\Omega$, 
then $u\in C^{1}(\Omega,\mathbb{R})$. 
\item[(vi)] $p\in D^{+}u(x)$  if and only if
\begin{equation}\label{eq:semiconcave-grad}
u(x+h)-u(x)-\langle p, h\rangle\leqslant \frac{K}{2}|h|^2,
\end{equation}
for all $x,h\in\R^N$ such that $[x,x+h]\subset \Omega$. 
\end{enumerate}
\end{theorem}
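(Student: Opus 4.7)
The natural plan is to reduce everything to properties of concave functions by means of the equivalence noted in Remark~\ref{semiconc-prop-2}: $u$ is semiconcave on $\Omega$ with constant $K$ if and only if $\widetilde{u}(x)\doteq u(x)-\frac{K}{2}|x|^2$ is concave on $\Omega$. Since $x\mapsto \frac{K}{2}|x|^2$ is smooth, it transfers regularity, differentiability and the structure of the superdifferential transparently between $u$ and $\widetilde{u}$. Specifically, a direct computation from \eqref{sup-sub-diff} and \eqref{reach-grad} gives $D^{+}u(x)=\partial_c\widetilde{u}(x)+Kx$ and $D^{*}u(x)=\partial^{\,r}_c\widetilde{u}(x)+Kx$, where $\partial_c$ and $\partial^{\,r}_c$ denote, respectively, the concave superdifferential and the set of reachable gradients of $\widetilde{u}$. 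Thus it suffices to establish the stated properties for concave functions on open sets.

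For statement $(i)$, I would invoke the classical fact that a concave function on an open subset of $\R^N$ is locally Lipschitz (a supporting hyperplane bounds difference quotients) and then apply Rademacher's theorem (or the stronger Alexandrov theorem) to deduce almost everywhere differentiability. For $(ii)$, I would show that $\partial_c\widetilde{u}(x)$ is nonempty, convex and compact using the classical supporting hyperplane argument for concave functions on open sets, which gives directly the required property of $D^+u(x)$. Property $(vi)$ follows by applying inequality~\eqref{eq:semiconcave} along the segment $[x,x+h]$ to deduce that $p$ satisfies~\eqref{eq:semiconcave-grad} iff $p-Kx$ is a concave supergradient of $\widetilde{u}$ at $x$, which by definition is equivalent to $\widetilde{u}(x+h)-\widetilde{u}(x)-\langle p-Kx,h\rangle\leqslant 0$; expanding the quadratic shift produces exactly~\eqref{eq:semiconcave-grad}. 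Property $(iv)$ then follows since differentiability of $\widetilde u$ at $x$ is equivalent to $\partial_c\widetilde{u}(x)$ being a singleton, and differentiability of $u$ is equivalent to that of $\widetilde{u}$ at $x$.

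The main technical point will be $(iii)$, that $D^{+}u(x)=\mathrm{co}\,D^{*}u(x)$. The inclusion $\mathrm{co}\,D^{*}u(x)\subseteq D^{+}u(x)$ is straightforward: by Lipschitz continuity each reachable gradient is a limit of supergradients, $D^{+}u$ is upper semicontinuous with closed graph, and $D^{+}u(x)$ is convex by $(ii)$. The harder inclusion requires showing every extreme point of the compact convex set $D^{+}u(x)$ lies in $D^{*}u(x)$, after which the Krein--Milman theorem closes the argument. To produce such extreme points as reachable gradients, one picks, for an extreme point $p$, a sequence of directions $e_k\to 0$ such that the linear functional $q\mapsto \langle q,e_k\rangle$ uniquely attains its maximum on $D^{+}u(x)$ at $p$; then, exploiting the a.e.~differentiability from $(i)$ and the fact that the gradient at nearby points of differentiability must lie close to the maximizer of $\langle\,\cdot\,,e_k\rangle$ on $D^{+}u(x)$ (a consequence of the semiconcavity inequality~\eqref{eq:semiconcave-grad}), one selects points of differentiability $x_n\to x$ whose gradients converge to $p$, so $p\in D^{*}u(x)$.

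Finally, $(v)$ follows from $(iii)$ combined with the upper semicontinuity of $D^{+}u$: if $D^{+}u(x)=\{p(x)\}$ is always a singleton, then $u$ is everywhere differentiable by $(iv)$ with $\nabla u(x)=p(x)$, and upper semicontinuity of a set-valued map with compact singleton values coincides with continuity of the selection, so $\nabla u$ is continuous. I anticipate the extreme-point/Krein--Milman argument in $(iii)$ to be the one nontrivial step; everything else is either classical convex analysis or a direct translation through the quadratic shift $\widetilde{u}$.
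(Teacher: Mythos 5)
The paper does not prove this theorem; it simply cites Cannarsa and Sinestrari \cite[Theorem 2.31, Propositions 3.3.1, 3.3.4, Theorem 3.3.6, Proposition 3.3.10]{CS}, so there is no in-paper argument to compare against. Your strategy---reducing to concave functions via the quadratic shift $\widetilde{u}(x)=u(x)-\tfrac{K}{2}|x|^2$ and then applying classical convex analysis, Rademacher's theorem, and an extreme-point/Krein--Milman argument for the identity $D^+u=\mathrm{co}\,D^*u$---is precisely the route taken in the cited source, so in spirit you are reconstructing that proof rather than offering a different one, and your verification of the bookkeeping behind the identities $D^+u(x)=\partial_c\widetilde{u}(x)+Kx$, $D^*u(x)=\partial_c^r\widetilde{u}(x)+Kx$, and the equivalence in (vi) is correct (the global inequality \eqref{eq:semiconcave-grad} following from the local superdifferential condition via concavity of $\widetilde{u}$).

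There is, however, one genuine gap in your treatment of $(iii)$. You take an extreme point $p$ of $D^+u(x)$ and a direction $e_k$ for which $q\mapsto\langle q,e_k\rangle$ attains its maximum on $D^+u(x)$ uniquely at $p$; but that is the definition of an \emph{exposed} point, and in $\R^N$ with $N\geqslant 2$ not every extreme point of a compact convex set is exposed (a stadium-shaped set already furnishes a counterexample). The argument you sketch therefore only shows that exposed points of $D^+u(x)$ belong to $D^*u(x)$. To close the gap one must first observe that $D^*u(x)$ is compact (hence closed), then invoke Straszewicz's theorem to get that every extreme point of $D^+u(x)$ is a limit of exposed points and thus still lies in $D^*u(x)$, and only then apply Minkowski's theorem (the finite-dimensional Krein--Milman) to conclude $D^+u(x)=\mathrm{co}\,D^*u(x)$. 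Once this is inserted, the remainder of your sketch, including the upper-semicontinuity argument yielding $(v)$, is sound.
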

\medskip
\begin{proposition}
\label{prop2}
Let $\Omega\subseteq\mathbb{R}^N$ be open convex
 and $u:\Omega\rightarrow \mathbb{R}$ be semiconcave with  constant $K$. 
Then, 
for every $x,y\in\Omega$, 
and for any $p_x\in D^+u(x)$, $p_y\in D^+u(y)$, there holds
\[
\big\langle p_y-p_x,y-x\big\rangle\leqslant K\, |y-x|^2.
\]
\end{proposition}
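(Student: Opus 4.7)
The plan is to derive the estimate as a direct consequence of the pointwise gradient inequality for semiconcave functions, namely Theorem~\ref{Pro-Semi}-(vi), applied at both endpoints $x$ and $y$. Since $\Omega$ is open and convex, the whole segment $[x,y]$ lies inside $\Omega$, so the hypothesis $[z,z+h]\subset\Omega$ required in~\eqref{eq:semiconcave-grad} is automatically satisfied when I take $z=x$ with $h=y-x$ and, symmetrically, $z=y$ with $h=x-y$.

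First I would apply \eqref{eq:semiconcave-grad} with $p=p_x\in D^+u(x)$ and $h=y-x$, obtaining
\begin{equation*}
u(y)-u(x)-\langle p_x,\,y-x\rangle\,\leqslant\,\frac{K}{2}\,|y-x|^2.
\end{equation*}
Then I would apply the same inequality with the roles of the points swapped, i.e.\ at $y$ with $p=p_y\in D^+u(y)$ and $h=x-y$, which gives
\begin{equation*}
u(x)-u(y)-\langle p_y,\,x-y\rangle\,\leqslant\,\frac{K}{2}\,|y-x|^2.
\end{equation*}
Summing the two displays causes the $\pm(u(y)-u(x))$ contributions to cancel and leaves
\begin{equation*}
-\langle p_x,\,y-x\rangle-\langle p_y,\,x-y\rangle\,=\,\langle p_y-p_x,\,y-x\rangle\,\leqslant\,K\,|y-x|^2,
\end{equation*}
which is precisely the monotonicity-type estimate asserted in the proposition.

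There is essentially no obstacle in this argument: the only delicate point is tracking signs so that the summation produces $\langle p_y-p_x,y-x\rangle$ with the correct orientation rather than its negative, and to observe that the convexity of $\Omega$ is exactly what permits invoking \eqref{eq:semiconcave-grad} at both $x$ and $y$ with opposite increments. The sharp constant $K$ (rather than $2K$ or similar) comes for free from the two factors $K/2$ adding up.
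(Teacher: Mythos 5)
Your argument is correct and is the standard proof of this monotonicity-type estimate (the paper itself does not include a proof, citing the Cannarsa--Sinestrari monograph, where exactly this two-sided application of the superdifferential inequality~\eqref{eq:semiconcave-grad} followed by summation is used). The sign bookkeeping and the role of convexity of $\Omega$ in guaranteeing that both increments $\pm(y-x)$ keep the segment inside the domain are handled properly.
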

\begin{remark}\rm
\label{semiconc-semiconc}
Relying on the properties of the generalized gradients one can show that if a 
function $u:\Omega\rightarrow\mathbb{R}$ ( $\Omega\subseteq\mathbb{R}^N$ open and convex)
is both semiconcave and semiconvex in $\Omega$
then $u\in C^{1,1}(\Omega,\mathbb{R})$ (see~\cite[Corollary 3.3.8]{CS}).
\end{remark}

\subsection{Upper and lower bounds on the $\varepsilon$-entropy for a class of semiconcave functions}
We report here the estimates obtained in~\cite{APN} on the $\varepsilon$-entropy in ${\bf W^{1,1}}$ of classes of semiconcave
functions on $\mathbb{R}^N$.

Given any $R,M,K>0$, and any $\psi \in \Lip([-R, R]^N)$,
consider the 
classes of functions
\begin{equation}
\label{LRM}
\begin{aligned}
\mathcal L_{[R,M]}^{\psi}
&\doteq \Big\lbrace{u_0\in \Lip([-R, R]^N)\ \big|\ u_0=\psi \ \ \text{on} \ \ \partial\, [-R, R]^N\,,\; 
\Lip\big[u_0]
\leqslant M\Big\rbrace},
\end{aligned}
\end{equation}
and 
\begin{equation}
\label{LSRM}
\begin{aligned}
\mathcal{SC}_{[R,M,K]}^{\psi}
&\doteq \Big\lbrace{
u_0\in \mathcal L_{[R,M]}^{\psi}\, \big| \, u_0
\ \text{is semiconcave on} \ [-R,R]^N \ \text{with semiconcavity constant} \ K
\Big\rbrace}.
\end{aligned}
\end{equation}
%

%
\begin{theorem}
\label{estSC}
Given any $R,M,K>0$ and a semiconcave function $\psi \in \Lip([-R,R]^N)$
having Lipschitz constant $M$ and semiconcavity constant $K$, with the above notations
the followings hold:
\begin{itemize}
\item[(i)]
for every $0<\varepsilon< \frac{M R^2}{5}(\min\{R,M\})^N$, one has
\begin{equation}
\label{upper-entr-sc-1}
 \mathcal{H}_{\varepsilon}\Big(\mathcal{SC}^{\psi}_{[R,M,K]}
 \ \big|\ \mathbf{W}^{1,1} \big([-R,R]^N\big)\Big)\leqslant
\gamma^+_{_{[R,M,K,N]}}\cdot\frac{1}{\varepsilon^N}
\end{equation}
where
 \begin{equation}\label{Consta1}
\gamma^+_{_{[R,M,K,N]}}\doteq
 \omega_N^N \cdot \Big(4N\cdot\big(1+ M+(K+1)R\big)\Big)^{\!4N^2}\,.
 \end{equation}
\item[(ii)] 
for every $0<\varepsilon<\frac{\omega_N R^N}{N\,2^{N+9}}\min\{M,K\}$, one has
\begin{equation}
\label{low-entr-sc}
\mathcal{H}_{\varepsilon}\Big(\mathcal{SC}^{\psi}_{[R,2M,2K]}\ \big|\ \mathbf{W}^{1,1}\big([-R,R]^N\big)\Big)\geqslant
\gamma^-_{_{[R,K,N]}}\cdot\frac{1}{\varepsilon^N},
\end{equation}
where 
\begin{equation}\label{eq:Gamma}
\gamma^-_{_{[R,K,N]}}\doteq \frac{1}{8\cdot\ln 2}\cdot \bigg(\frac{K\, \omega_N\, R^{N+1}}{48(N+1)\,2^{N+1}}\bigg)^{\!\!N}.
\end{equation}
 \end{itemize}
\end{theorem}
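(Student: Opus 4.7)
Parts (i) and (ii) are independent and I would attack them by separate combinatorial-geometric counting arguments tailored to the semiconcave structure; (i) is the genuinely difficult direction, while (ii) is a classical Varshamov--Gilbert packing. For (i) the plan is to produce an explicit $\varepsilon$-cover by quantising both the values and the gradients of $u$ on a suitable lattice in $[-R,R]^N$. Writing each $u\in\mathcal{SC}^\psi_{[R,M,K]}$ as $u(x)=\widetilde{u}(x)+\tfrac{K}{2}|x|^2$, the function $\widetilde{u}$ is concave with Lipschitz constant at most $M+KR$; on a grid of spacing $\delta$ I would record the value $u(x_i)$ at each node rounded to a multiple of $\eta$, together with a quantised selection of $\nabla\widetilde{u}(x_i)$. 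By Theorem~\ref{Pro-Semi}(vi) the $\mathbf{L}^\infty$-error of the resulting piecewise-affine reconstruction is controlled by $K\delta^2$, and Proposition~\ref{prop2} together with a BV-Poincar\'e inequality applied cell-by-cell bounds the $\mathbf{L}^1$-error of $\nabla u$ by a multiple of $K\delta$ per unit volume. Summing over the $\sim(R/\delta)^N$ cells gives a total $\mathbf{W}^{1,1}$-error of order $\varepsilon$ once $\delta$ is chosen as a small power of $\varepsilon/(1+M+(K+1)R)$, and the number of admissible configurations is at most $\bigl(C(1+M+(K+1)R)\bigr)^{(R/\delta)^N}$; passing to the base-$2$ logarithm matches the announced $\gamma^+_{_{[R,M,K,N]}}/\varepsilon^N$.

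For (ii) the plan is the classical bump-and-code construction. First produce a single radial $C^{1,1}$ bump $b$ supported in $B(0,r)$ with Lipschitz constant $\leq M$, semiconcavity constant $\leq K$, and $\|b\|_{\mathbf{W}^{1,1}}\gtrsim K\,r^{N+1}$; this determines the scale $r$. Next, pack $n\sim (R/r)^N$ disjoint translates $B(x_j,r)$ in the interior of $[-R,R]^N$ and, for each sign pattern $\sigma\in\{-1,+1\}^n$, set
\begin{equation*}
u_\sigma(x) \;:=\; \psi(x)+\sum_{j=1}^n \sigma_j\, b(x-x_j).
\end{equation*}
Since the bumps sit strictly inside and each adds at most $M$ to the Lipschitz constant and $K$ to the semiconcavity constant, one has $u_\sigma\in\mathcal{SC}^\psi_{[R,2M,2K]}$; by disjointness of the supports, $\|u_\sigma-u_{\sigma'}\|_{\mathbf{W}^{1,1}}=2\,h(\sigma,\sigma')\,\|b\|_{\mathbf{W}^{1,1}}$, where $h$ denotes Hamming distance. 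The Varshamov--Gilbert bound supplies at least $2^{n/8}$ codewords with pairwise Hamming distance $\geq n/4$; for these pairs the $\mathbf{W}^{1,1}$-distance is at least $(n/2)\|b\|_{\mathbf{W}^{1,1}}$, which exceeds $2\varepsilon$ as soon as $r$ is fixed so that $n\,K r^{N+1}\asymp\varepsilon$. Definition~\ref{DefKE} then gives $\mathcal{H}_\varepsilon\geq n/8$, and solving for $n$ in terms of $r$ yields the claimed $\gamma^-_{_{[R,K,N]}}/\varepsilon^N$.

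The principal technical obstacle lies in (i): a brute-force quantisation of $\nabla u$ viewed as a generic $(M+KR)$-Lipschitz vector field would produce a covering of cardinality $\exp(C/\varepsilon^{N+1})$, too large by a factor $1/\varepsilon$ in the exponent. One must instead exploit the monotonicity of $\nabla\widetilde{u}$ encoded in Proposition~\ref{prop2}---equivalently, the fact that $-D^2\widetilde{u}\geq 0$ as a matrix of Radon measures---to show that, conditionally on the nodal values, the number of admissible \emph{monotone} gradient profiles on each small cube is essentially $O(1)$ rather than polynomial in $1/\varepsilon$. Extracting the precise combinatorial factor $\omega_N^N$ and exponent $4N^2$ inside $\gamma^+_{_{[R,M,K,N]}}$ is then a matter of careful but routine bookkeeping of these per-cell counts. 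In (ii) the secondary subtlety is arranging the radial profile of $b$ so that its semiconcavity constant is tight at $K$, ensuring that only the factor~$2$ of slack in the class $\mathcal{SC}^\psi_{[R,2M,2K]}$ is consumed.
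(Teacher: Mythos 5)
Your overall strategy lines up with the paper's, which in turn defers the hard combinatorial work to~\cite{APN} (Propositions~8 and~10): the paper's own proof consists only of the reduction from the class $\mathcal{SC}^\psi_{[R,M,K]}$ with nonzero boundary trace $\psi$ to the earlier compactly supported case, namely the isometry $\mathcal{T}_K$ to concave functions followed by the trace-zero Poincar\'e inequality for the upper bound, and the isometric shift $u\mapsto u+\psi$ of the packing set $\mathcal{U}_\varepsilon$ for the lower bound. You incorporate both devices implicitly, and your bump-and-code construction with the Varshamov--Gilbert bound is essentially the content of~\cite[Prop.~10]{APN}; writing $u_\sigma=\psi+\sum_j\sigma_j b(\cdot-x_j)$ is exactly the paper's $\psi$-shift.

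Two points deserve flagging. In (ii), the bump $b$ must be semiconvex with constant $\leqslant K$ in addition to being semiconcave with constant $\leqslant K$: whenever $\sigma_j=-1$ the summand is $-b$, and for $u_\sigma$ to lie in $\mathcal{SC}^\psi_{[R,2M,2K]}$ one needs $-b$ to have semiconcavity constant $\leqslant K$, i.e. $D^2 b\geqslant -KI$. Your $C^{1,1}$ requirement with $\|D^2 b\|\leqslant K$ does secure this, but the semiconvexity constraint should be stated explicitly since it is the binding one for negative signs. In (i), the claim that ``conditionally on the nodal values, the number of admissible monotone gradient profiles on each small cube is essentially $O(1)$'' is not correct as stated: on an individual $\delta$-cube the quantised gradient can still range over roughly $\big((M+KR)/\eta\big)^N$ values. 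What actually tames the count is the \emph{global} monotonicity of $\nabla\widetilde u$ along coordinate lines across the whole grid (equivalently, the bounded total variation of $\nabla u$), so that the number of admissible staircase profiles along a line of $m$ cubes with $k$ quantisation levels is of order $\binom{m+k}{k}$ rather than $k^m$---the same mechanism driving the BV entropy estimate of De Lellis--Golse. The per-cube bookkeeping you describe would not by itself close the gap; the counting has to be organized along lines, and it is this step (carried out in~\cite{APN}) that the present paper simply cites.
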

%
%
\begin{proof}
The estimates stated in Theorem~\ref{estSC} were established in~\cite[Proposition~8, Proposition~10]{APN} for the class of functions
\begin{equation}
\label{SC-old}
\mathcal{SC}_{[R,M,K]}
\doteq \Big\lbrace{
u_0\in \mathcal L_{[R,M]}\, \big| \, u_0 \ \text{is semiconcave with semiconcavity constant} \ K
\Big\rbrace}
\end{equation}
(with $ \mathcal L_{[R,M]}$ as in~\eqref{CLM-1}), which consists of the extensions to $\R^N$ of the elements 
in $\mathcal{SC}^0_{[R,M,K]}$.
However, with the same arguments of the proof of~\cite[Proposition~8]{APN} one  obtains the upper bound
\eqref{upper-entr-sc-1} for the class of function in~\eqref{LRM}
and for a general $\psi \in \Lip([-R,R]^N)$.
In fact, as in~\cite{APN}, we may define the map  $\mathcal{T}_K:  \mathbf{L}^1([-R,R]^N) \to  \mathbf{L}^1([-R,R]^N)$,
that associates to any $f\in \mathbf{L}^1([-R,R]^N)$ the function
\begin{equation}
\label{Tk-map}
\mathcal{T}_K f(x) \doteq  
f(x)+\frac{K}{2}|x|^2\,,
%
\end{equation}
and then consider the class of concave functions 
\begin{equation}
\label{C-klm-class}
\mathcal{C}^\psi_{[R,M,K]} \doteq \Big\lbrace{f\in\mathbf{W}^{1,1}([-R,R]^N) \ \big{|}\ \mathcal{T}_K f\in\mathcal{SC}^\psi_{[R,M,K]} \Big\rbrace}.
\end{equation}
Since $\mathcal{T}_K: \mathcal{C}^\psi_{[R,M,K]}\to \mathcal{SC}^\psi_{[R,M,K]}$ is a surjective isometry,
it is sufficient to provide an upper bound on the $\varepsilon$-entropy of the set $\mathcal{C}^\psi_{[R,M,K]}$.
Then, observing that for every $f_1, f_2\in \mathcal{C}^\psi_{[R,M,K]}$ there holds
$f_1-f_2\in \mathbf{W}^{1,1}_0([-R,R]^N)$,
and applying  the Poincar\'e inequality
for trace-zero $\mathbf{W}^{1,1}$ functions, 
we produce as shown in~\cite{APN}
an $\varepsilon$-covering of  $\mathcal{C}^\psi_{[R,M,K]}$ in $\mathbf{W}^{1,1}$
with a cardinality of order $2^{\frac{\gamma^+_{_{[R,M,K,N]}}}{\varepsilon^N}}$.
This yields the upper bound~\eqref{upper-entr-sc-1}.
Similarly, one can recover the lower bound~\eqref{low-entr-sc}
as follows.
For $\varepsilon$ sufficiently small, it was shown in
the proof of~\cite[Proposition~10]{APN}
that there exists a class of $C^1$ semiconcave functions $\mathcal{U}_\varepsilon\subset \mathcal{SC}_{[R,M,K]}$
for which
\begin{equation}
\label{low-entr-sc-old}
\mathcal{H}_{\varepsilon}\Big(\mathcal{U}_\varepsilon\ \big|\ \mathbf{W}^{1,1}\big(\R^N\big)\Big)\geqslant
\gamma^-_{_{[R,K,N]}}\cdot\frac{1}{\varepsilon^N}.
\end{equation}
On the other hand, since $\psi$ is a semiconcave function with Lipschitz constant $M$
and semiconcavity constant $K$, by definition~\eqref{LSRM}
it follows that the restrictions to $[-R, R]^N$ of the maps in $\mathcal{U}_\varepsilon+\psi$
are all elements of $\mathcal{SC}^\psi_{[R,2M,2K]}$.
Thus, one has
\begin{equation}
\begin{aligned}
\mathcal{H}_{\varepsilon}\Big(\mathcal{SC}^{\psi}_{[R,2M,2K]}\ \big|\ \mathbf{W}^{1,1}\big([-R,R]^N\big)\Big)
&\geqslant 
\mathcal{H}_{\varepsilon}\Big(\mathcal{U}_\varepsilon+\psi\ \big|\ \mathbf{W}^{1,1}\big([-R,R]^N\big)\Big)
\\
&=\mathcal{H}_{\varepsilon}\Big(\mathcal{U}_\varepsilon\ \big|\ \mathbf{W}^{1,1}\big([-R,R]^N\big)\Big)
\\
&=\mathcal{H}_{\varepsilon}\Big(\mathcal{U}_\varepsilon\ \big|\ \mathbf{W}^{1,1}\big(\R^N\big)\Big),
\end{aligned}
\end{equation}
which, together with~\eqref{low-entr-sc-old}, yields~\eqref{low-entr-sc}.
\end{proof}
\begin{remark}
\label{SC-low-entr}\rm
The same lower bound~\eqref{low-entr-sc}  of Theorem~\ref{estSC}-(ii) holds 
for the class of $C^1$ elements of the set $\mathcal{SC}_{[R,M,K]}^{\psi}$:
%
\begin{equation}
\label{LSRM-1}
\mathcal{SC}_{[R,M,K]}^{\psi,1}
\doteq \Big\lbrace
u_0\in \mathcal{SC}_{[R,M,K]}^{\psi}\cap C^1([-R,R]^N)
\ \big| \,  D u_0 = D \psi\ \ \text{on} \ \ \partial\, [-R, R]^N
\Big\rbrace.
\end{equation}
\end{remark}
\medskip

\subsection{Hamilton Jacobi equation}
\label{subsec:HJ}
Consider the Hamilton-Jacobi equation~\eqref{HJ}, and observe
that 
the assumptions {\bf (H1)-(H2)} imply that the Legendre transform $L(x,q)$ of $p\to H(x,p)$ 
defined in~\eqref{Legendre-L} enjoy similar properties as $H$
(cfr. Remark~\ref{assumpt-thm1} in the Introduction and \cite[Appendix~A.2]{CS}):
\begin{itemize}
%
\item[{\bf (L1)}] $L\in C^2(\mathbb{R}^N\times \mathbb{R}^N)$ is a 
convex and 
coercive map with respect to the second group of variables, i.e., 
\begin{equation}
\label{unif-semiconc-L}
0<D^2_q L(x,q)
\qquad\ \forall~x, q\in\mathbb{R}^N,
\end{equation}
\begin{equation}
\label{coercive-L}
\lim_{|q|\rightarrow\infty}\frac{L(x,q)}{|q|}=+\infty
\qquad \forall~x\in\R^N.
\end{equation}
%
\item[{\bf (L2)}] There exist constants $c_1, c_3, c_4, c_5\geq 0$, $c_2>0$ and $\alpha>1$
so that
\begin{align}
\label{subl-L1}
L(x,0)&\leqslant c_1(1+|x|)\qquad\forall~x\in\mathbb{R}^N\,,
\\
\noalign{\medskip}
\label{subl-L2}
L(x,q)&\geqslant c_2 |q|^\alpha - c_3\qquad\forall~x,q\in\mathbb{R}^N\,,
\\
\noalign{\medskip}
\label{subl-L3}
\big|D_x L(x,q)\big|&\leqslant c_4 |q|^\alpha +c_5
\qquad\forall~x,q\in\mathbb{R}^N\,.
\end{align}
\end{itemize}
%

%
%

As we mentioned in the introduction, since 
solutions of the Cauchy problem for~\eqref{HJ} may develop singularities in the gradient
in finite time, even with smooth initial data, a concept of generalized solution,
the {\it viscosity solution}, 
was  introduced in~\cite{CL} (see also \cite{CEL}).
We recall here the:
\begin{definition}\label{viscosity-solution}
We say that a continuous function $u:[0,T]\times\mathbb{R}^N$ is a viscosity solution of \eqref{HJ} if:
\begin{enumerate}
\item[$\mathrm{(i)}$] u is a viscosity subsolution of \eqref{HJ}, i.e., for every point $(t_0,x_0)\in \,]0,T[\,\times\mathbb{R}^N$ and test function $v\in C^1\big((0,+\infty)\times\mathbb{R}^N\big)$ such that $u-v$ has a local maximum at $(t_0,x_0)$, it holds
\[
v_t(t_0,x_0)+H\big(x_0,\nabla_{\!x} v(t_0,x_0)\big)\leqslant 0\,,
\] 
\item[$\mathrm{(ii)}$]  u is a viscosity supersolution of (\ref{HJ}), i.e., for every point $(t_0,x_0)\in \,]0,T[\,\times\mathbb{R}^N$ and test function $v\in C^1\big((0,+\infty)\times\mathbb{R}^N\big)$ such that $u-v$ has a local minimum at $(t_0,x_0)$, it holds
\[
v_t(t_0,x_0)+H\big(x_0,\nabla_{\!x} v(t_0,x_0)\big)\geqslant 0\,.
\] 
\end{enumerate}
In addition, we say that $u$ is a viscosity solution of the Cauchy problem~\eqref{HJ}-\eqref{in-data} 
if condition~\eqref{in-data} is satisfied in the classical sense.
\end{definition}
\begin{remark}
\label{visos-sol-differentiab}\rm
An alternative equivalent definition of viscosity solution is expressed in terms of 
the sub and superdifferential of the function (see~\cite{CEL}).
Relying on this definition,
and because  of Theorem~\ref{Pro-Semi}-$(iv)$,
one immediately see that every $C^1$ solution of~\eqref{HJ} is also a viscosity solution
of~\eqref{HJ}. On the other hand, if $u$ is a viscosity solution of~\eqref{HJ}, then $u$ satisfies the equation
at every point of differentiability. 
Moreover, one can show that if  $u:[0,T]\times \Omega$, $\Omega\subset\R^N$, is a viscosity solution of~\eqref{HJ} 
and we know that 
$u(t,\cdot )$ is both semiconcave and semiconvex in $\Omega$ for all $t\in \,]0,T]$,
then $u$ is a continuously differentiable 
classical solution of~\eqref{HJ} on~$]0,T]\times \Omega$
(e.g. see~\cite[Proposition~3]{APN}).
\end{remark}

Under the assumption \textbf{(L1)}, 
for every $u_{0}\in \Lip(\mathbb{R}^N)$,
the value function defined in~\eqref{Value-function}
in connection with the Bolza problem of calculus of variation with running cost $L$ and initial cost~$u_0$:
\begin{equation*}
\tag*{$\text{(CV)}_{t,x}$}
\min_{\xi\in AC([0,t],\mathbb{R}^N)}\bigg\lbrace{u_{0}(\xi(0))+\int_{0}^{t}L(\xi(s),\dot{\xi}(s))ds\ \Big|\ \xi(t)=x\ \bigg\rbrace}
\end{equation*}
provides 
the (unique) viscosity solution of the Hamilton-Jacobi equation~\eqref{HJ} with initial data
$u(0,\cdot)=u_{0}$
(see~\cite[Section~6.4]{CS}).
We recall below some properties of viscosity solutions 
of interest in this paper which follow from the
representation formula~\eqref{Value-function} 
(cfr.~\cite[Sections 1.2, 6.3, 6.4]{CS}). 

\begin{theorem}
\label{prop-viscsol}
Assume that the Legendre transform $L$ of $H$ in~\eqref{Legendre-L} satisfies the
assumptions \textbf{(L1)-(L2)} and, given $u_{0}\in \Lip(\mathbb{R}^N)$,
 let $u$ be the viscosity solution of~\eqref{HJ}-\eqref{in-data}
on $[0,+\infty[\,\times\R^N$,
defined by~\eqref{Value-function}.
Then, the following holds true.

\begin{enumerate}
\item[$(i)$] \textbf{Dynamic programming principle}: for all $x\in\R^N$ and $s\in [0,t[\,$, $t>0$, we have 
\begin{equation}
\label{DPP}
u(t,x)=\min_{\xi\in AC([s,t],\mathbb{R}^N)}\bigg
\lbrace{u(s,\xi(s))+\int_{s}^{t}L(\xi(\tau),\dot{\xi}(\tau))d\tau\ \Big|\ \xi(t)=x
\bigg\rbrace}.
\end{equation} 
Moreover, if $\xi^*$ is a minimizer for $(CV)_{t,x}$,
the restriction of $\xi^*$ to $[s, t]$
is also a minimizer in~\eqref{DPP}.
%

%
\item[$(ii)$] \textbf{Euler-Lagrange equation}:
for all $x\in\R^N$ and $t>0$, if $\xi^*$ is a minimizer for $(CV)_{t,x}$,
$\xi^*$ is a Caratheodory solution of the equation
\begin{equation}
\label{EL}
\frac{d}{ds}D_q L\big(\xi(s), \dot{\xi}(s)\big)=
D_x L\big(\xi(s), \dot{\xi}(s)\big)
\end{equation}
on $[0,t]$, i.e. $\xi^*$ satisfies~\eqref{EL} for almost every $s\in [0,t]$.
Moreover, one has 
\begin{equation}
\label{transvers}
D_q L \big(\xi^*(0), \dot{\xi^*}(0)\big)\in \partial u_0(\xi^*(0)),
\end{equation}
where $\partial u$ denotes Clarke's generalized gradient.
\item[$(iii)$] \textbf{Generalized backward characteristics}: 
for all $x\in\R^N$ and $t>0$, if $\xi^*$ is a minimizer for $(CV)_{t,x}$,
there exists $p^*\in AC([0,t],\mathbb{R}^N)$ (called the dual or co-state arc associated with $\xi^*$) so that $(\xi^*, p^*)$
provides the solution of the system
\begin{equation}
\label{hamilt-syst}
\begin{cases}
\hspace{.cm}
\dot \xi = D_p H(\xi, p),&
\\
\hspace{.cm}
\dot p = - D_x H(\xi, p),&
\end{cases}
\end{equation}
on $[0,t]$, with terminal condition
\begin{equation}
\label{final-data}
\begin{cases}
\hspace{.cm}
\xi (t) =x,&
\\
\hspace{.cm}
p (t)\in D^+_x u(t, x).&
\end{cases}
\end{equation}
Moreover, $u(s,\cdot)$ is differentiable at $\xi^*(s)$
for any $s\in\,]0,t[$ and one has
\begin{equation}
\label{p-cond1}
p^*(s)\in D_x u(s,\xi^*(s))\qquad\forall~s\in\,]0,t[\,,
\end{equation}
\begin{equation}
\label{p-cond0}
p^*(0)\in \partial u_0(\xi^*(0))\,.
\end{equation}
%
%
\end{enumerate} 
\end{theorem}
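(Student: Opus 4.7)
The three statements are the classical calculus-of-variations characterization of the value function, and I would address them in the order given.

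For (i), the plan is to exploit the additivity of the running cost: for any admissible $\xi$ on $[0,t]$ with $\xi(t)=x$ and any intermediate time $s$, I split the action into contributions on $[0,s]$ and $[s,t]$, minimize first over $\xi|_{[0,s]}$ with fixed endpoint $\xi(s)=y$ to recognize $u(s,y)$, and then minimize over $y$ and over $\xi|_{[s,t]}$. Existence of minimizers for $(CV)_{t,x}$ is handled by Tonelli's direct method: the superlinear coercivity~\eqref{subl-L2} yields weak-$\mathbf{W}^{1,\alpha}$ precompactness of minimizing sequences and the convexity~\eqref{unif-semiconc-L} of $q\mapsto L(x,q)$ gives lower semicontinuity. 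Applying the same argument on $[s,t]$ with initial cost $u(s,\cdot)$ (which is Lipschitz by standard a-priori estimates) gives attainment in~\eqref{DPP}. The optimality of the restriction $\xi^*|_{[s,t]}$ is a one-line concatenation argument: a strictly better trajectory on $[s,t]$ glued to $\xi^*|_{[0,s]}$ would contradict the minimality of $\xi^*$.

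For (ii), since $L\in C^2$ and every minimizer has essentially bounded velocity (a consequence of~\eqref{subl-L2} together with the Lipschitz bound on $u_0$), the standard derivation of the Euler--Lagrange equation goes through: compute the first variation along $\xi^*+\varepsilon\eta$ for $\eta\in C_c^\infty((0,t),\R^N)$, integrate by parts, and invoke the du Bois--Reymond lemma to obtain~\eqref{EL}. For the initial transversality~\eqref{transvers} I allow $\eta(0)\neq 0$, combine the first-order expansion of the action with the one-sided inequality $u_0(\xi^*(0)+\varepsilon\eta(0))-u_0(\xi^*(0))\geqslant\varepsilon\langle p,\eta(0)\rangle+o(\varepsilon)$ valid for every $p\in\partial u_0(\xi^*(0))$, and conclude that $D_q L(\xi^*(0),\dot{\xi^*}(0))\in\partial u_0(\xi^*(0))$.

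For (iii), I define the dual arc by $p^*(s):=D_q L(\xi^*(s),\dot{\xi^*}(s))$; the identities~\eqref{L2-id} then give $\dot{\xi^*}(s)=D_p H(\xi^*(s),p^*(s))$, and substituting $D_x L(x,q)=-D_x H(x,p_{x,q})$ into~\eqref{EL} yields $\dot{p^*}(s)=-D_x H(\xi^*(s),p^*(s))$, so the Hamiltonian system~\eqref{hamilt-syst} is satisfied on $[0,t]$. The initial condition~\eqref{p-cond0} is just a restatement of~\eqref{transvers}. For the terminal inclusion $p^*(t)\in D_x^+ u(t,x)$, I compare $u(t,x+h)$ with the cost of a translated admissible trajectory ending at $x+h$ (e.g.\ $\xi^*(s)+(s/t)h$), use a Taylor expansion of $L$ around $(\xi^*,\dot{\xi^*})$ together with integration of~\eqref{EL} to reorganize the first-order terms into $\langle p^*(t),h\rangle$, and conclude the superdifferential inequality~\eqref{eq:semiconcave-grad} with $p=p^*(t)$. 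For the interior differentiability and~\eqref{p-cond1}, the dynamic programming principle makes $\xi^*|_{[0,s]}$ optimal for $(CV)_{s,\xi^*(s)}$ and $\xi^*|_{[s,t]}$ optimal for the corresponding subproblem starting from $u(s,\cdot)$; applying the terminal transversality to the first gives $p^*(s)\in D_x^+ u(s,\xi^*(s))$, while applying the initial transversality to the second gives $p^*(s)\in D_x^- u(s,\xi^*(s))$. The classical fact that $D^+u\cap D^-u\neq\emptyset$ at a point forces differentiability there then yields both~\eqref{p-cond1} and the differentiability of $u(s,\cdot)$ at $\xi^*(s)$.

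The step I expect to be the main obstacle is the terminal transversality $p^*(t)\in D_x^+u(t,x)$ together with the interior differentiability along the minimizer: these are the points where the calculus-of-variations Taylor expansions must be carefully reconciled with the one-sided value-function comparisons of the viscosity framework, and all the bookkeeping of boundary terms in the integration by parts concentrates there. Once that step is in place, the Hamiltonian system and the identification of $p^*$ as a selection of $D_x u$ along the minimizer follow by direct translation through the Legendre transform.
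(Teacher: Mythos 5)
The paper does not prove Theorem~\ref{prop-viscsol}; it is stated as a recap of standard facts and cited from Cannarsa--Sinestrari \cite[Sections 1.2, 6.3, 6.4]{CS}. Your outline (Tonelli's direct method, first variation, Legendre duality, the $D^+\cap D^-$ differentiability criterion) follows the same classical calculus-of-variations route as that reference, so in spirit the approach matches.

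One step in part~(ii) is wrong as written, though the conclusion is recoverable. The inequality $u_0(\xi^*(0)+\varepsilon\eta(0))-u_0(\xi^*(0))\geqslant\varepsilon\langle p,\eta(0)\rangle+o(\varepsilon)$ is \emph{not} ``valid for every $p\in\partial u_0(\xi^*(0))$'': for a Lipschitz, nonconvex $u_0$ that is precisely the defining inequality of the Fr\'echet subdifferential $D^-u_0$, which is in general a much smaller set than Clarke's generalized gradient. (Take $u_0(x)=-|x|$ at $x=0$ and $p=1\in\partial u_0(0)$ for a counterexample.) The first variation, after integrating by parts with the Euler--Lagrange equation, actually yields
\begin{equation*}
\liminf_{h\to 0}\frac{u_0(\xi^*(0)+h)-u_0(\xi^*(0))-\big\langle D_qL\big(\xi^*(0),\dot{\xi^*}(0)\big),h\big\rangle}{|h|}\geqslant 0,
\end{equation*}
i.e.\ $D_qL(\xi^*(0),\dot{\xi^*}(0))\in D^-u_0(\xi^*(0))$, and~\eqref{transvers} then follows from the inclusion $D^-u_0(x)\subset\partial u_0(x)$. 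You in fact state the correct form of this conclusion in part~(iii) (``initial transversality gives $p^*(s)\in D^-_xu(s,\xi^*(s))$''), so the slip is confined to the exposition of part~(ii). A further minor omission worth recording is that defining $p^*(s)=D_qL(\xi^*(s),\dot{\xi^*}(s))$ as an absolutely continuous arc uses Hilbert's regularity theorem (the Euler--Lagrange equation plus the strict convexity~\eqref{unif-semiconc-L} give $\dot{\xi^*}\in C^1$), which you implicitly assume when differentiating along $\xi^*$.
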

\quad
\\
\noindent
By the above observations,
the family of nonlinear operators
$$S_t:\Lip_{loc}(\mathbb{R}^N)\rightarrow\Lip_{loc}(\mathbb{R}^N), 
\qquad\quad u_0\mapsto S_t u_0, \ \ t\geqslant 0,$$
defined by
\begin{equation}
\label{value-function}
S_t u_0(x)
\doteq
\min_{\xi\in AC([0,t],\mathbb{R}^N)}\bigg\lbrace{u_{0}(\xi(0))+\int_{0}^{t}L(\xi(s),\dot{\xi}(s))ds\ \Big|\ \xi(t)=x\ \bigg\rbrace}
\end{equation}
enjoy the following properties:
\begin{itemize}
\item[(i)] for every  $u_0\in\Lip(\mathbb{R}^N)$, $u(t,x)\doteq S_t u_0(x)$ provides the unique viscosity solution of the 
Cauchy problem~\eqref{HJ}-\eqref{in-data};
\item[(ii)] (semigroup property)
$$S_{t+s} u_0 = S_t\, S_s u_0
\,,\quad\forall~t,s\geqslant 0\,,\;\forall u_0\in\Lip(\mathbb{R}^N).$$
%
\end{itemize}
\medskip

\section{Upper compactness estimates}
\label{sec:up-ext}

\subsection{A-priori bounds on the value function}
\label{subsec:bounds-vf}
Let $H:\R^N\to \R$ be  a function satisfying  the assumptions {\bf (H1)-(H2)}
and let $L$ be the corresponding Legendre transform in~\eqref{Legendre-L} .
We  establish here an a-priori bound  on the support of $S_t u_0 - S_t 0$
in terms of the support of $u_0$, and we
collect  some a-priori local bounds  on the 
semiconcavity costant and on the gradient of the value function $S_t u_0$
in~\eqref{value-function}.
In particular, given $R,M>0$, we shall  derive such properties in connection
with the set of initial data introduced in~\eqref{CLM-1}:
\begin{equation*}
\mathcal{L}_{[R,M]}=
\Big\lbrace{u_0\in \Lip(\mathbb{R}^N)\ \big|\ \mathrm{supp}(u_0)\subset [-R,R]^N\,,\; 
\Lip[u_0]\leqslant M\Big\rbrace}.
\end{equation*}

\begin{lemma}\label{L1-bound-xi}
Assume that the Legendre transform $L$ of $H$ 
satisfies the
assumptions \textbf{(L1)-(L2)} and, given $u_{0}\in \Lip(\mathbb{R}^N)\cap\mathbf{L}^\infty(\mathbb{R}^N)$,
 let $u$ be the viscosity solution of~\eqref{HJ}-\eqref{in-data}
on $[0,+\infty[\,\times\R^N$,
defined by~\eqref{value-function}.
Then, given $(t,x)\in \,]0,\infty[\,\times\mathbb{R}^{N}$, and letting
$\xi^*$ be a minimizer for $(CV)_{t,x}$, 
one has
\begin{equation}
\label{est-char-1}
\big|x-\xi^*(\tau)\big|
\leq l_{1}\big(\|u_0\|_{\strut\mathbf{L}^\infty(\mathbb{R}^N)},t\big)+\frac{|x|}{2}
\qquad\ \forall~\tau\in [0,t], 
\end{equation}
where
\begin{equation}
\label{l1-def}
 l_{1}\big(\|u_0\|_{\strut\mathbf{L}^\infty(\mathbb{R}^N)},t\big)\doteq
\frac{\|u_0\|_{\strut\mathbf{L}^\infty(\mathbb{R}^N)} + (c_1+c_3) \, t}{1+c_1\, t}+
t\, \bigg(\frac{2\, (1+c_1\,t )}{c_2}\bigg)^{\!\!\frac{1}{\alpha-1}}\,.
\end{equation}
Moreover, if $x\in [-l, l]^N$, $l>0$, then there holds
\begin{equation}
\label{est-der-char-1}
\big|\dot{\xi^*}(\tau)\big|
\leq \Chi_1\big(\|u_0\|_{\strut\mathbf{L}^\infty(\mathbb{R}^N)}, \Lip[u_0], l,t\big)
\qquad\ \forall~\tau\in [0,t], 
\end{equation}
for some constant $\Chi_1(\|u_0\|_{\strut\mathbf{L}^\infty}, \Lip[u_0],l,t)>0$
depending on $\|u_0\|_{\strut\mathbf{L}^\infty}$, 
$\Lip[u_0],l,t$.
\end{lemma}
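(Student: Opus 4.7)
The core idea is to extract an $\mathbf{L}^\alpha$-energy bound on $\dot\xi^*$ by comparing the value $u(t,x)$ against the cost of the constant trial trajectory $\eta(s) \equiv x$. Using the representation \eqref{value-function} together with the upper bound $L(x,0) \le c_1(1+|x|)$ from \textbf{(L2)} immediately gives $u(t,x) \le \|u_0\|_{\mathbf{L}^\infty} + c_1(1+|x|)\,t$; on the other hand, using $\xi^*$ itself in \eqref{value-function} together with the coercivity inequality $L(y,q) \ge c_2|q|^\alpha - c_3$, one has $u(t,x) \ge -\|u_0\|_{\mathbf{L}^\infty} - c_3 t + c_2 \int_0^t |\dot\xi^*(s)|^\alpha\,ds$. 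Subtracting yields
\[
c_2\int_0^t|\dot\xi^*(s)|^\alpha\,ds \le 2\|u_0\|_{\mathbf{L}^\infty} + (c_1+c_3)\,t + c_1 t |x|,
\]
and a standard H\"older estimate on $|x-\xi^*(\tau)| = \bigl|\int_\tau^t \dot\xi^*(s)\,ds\bigr|$ converts this into a pointwise displacement bound of the form $|x-\xi^*(\tau)|^\alpha \le \frac{t^{\alpha-1}}{c_2}\bigl(2\|u_0\|_{\mathbf{L}^\infty} + (c_1+c_3)t + c_1 t |x|\bigr)$.

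\textbf{The main obstacle in (i)} is to absorb the stray $|x|$-term on the right into the desired form $l_1 + |x|/2$. I plan a simple dichotomy: if $|x-\xi^*(\tau)| < |x|/2$ there is nothing to prove, while in the opposite case $|x| \le 2|x-\xi^*(\tau)|$ substitution reduces the displacement bound to $y^\alpha \le A + B y$ for $y := |x-\xi^*(\tau)|$ and explicit constants $A,B$ depending only on $\|u_0\|_{\mathbf{L}^\infty}$, $t$ and the parameters in \textbf{(L2)}. A Young-type inequality absorbing $By$ into $\tfrac12 y^\alpha$ then yields a uniform bound on $y$, and a careful tuning of the Young parameter should reproduce the precise factors $(1+c_1 t)$ appearing in \eqref{l1-def}; this constant-matching is the technically finicky step of the argument.

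\textbf{Plan for part (ii).} Once part (i) confines $\xi^*(\tau)$ to a ball whose radius depends only on $\|u_0\|_{\mathbf{L}^\infty}, l, t$ (since $x\in [-l,l]^N$ forces $|x|\le \sqrt{N}\,l$), I pass to the Hamiltonian formulation of Theorem~\ref{prop-viscsol}-(iii). The dual arc $p^*$ satisfies $\dot p^* = -D_x H(\xi^*,p^*)$ with $p^*(0)\in \partial u_0(\xi^*(0))$, and therefore $|p^*(0)|\le \Lip[u_0]$ by Remark~\ref{clarke-grad}. Using the growth condition $|D_x H(x,p)|\le c_4|D_p H(x,p)|^\alpha + c_5$ from \textbf{(H2)} together with $\dot\xi^* = D_p H(\xi^*,p^*)$, one gets the pointwise bound $|\dot p^*(s)|\le c_4|\dot\xi^*(s)|^\alpha + c_5$; integrating from $0$ to $\tau$ and inserting the $\mathbf{L}^\alpha$-bound on $\dot\xi^*$ furnished by part (i) yields a uniform a-priori bound on $|p^*(\tau)|$ with the required dependence on $\|u_0\|_{\mathbf{L}^\infty},\Lip[u_0],l,t$. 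Since $(\xi^*,p^*)$ then lives in a fixed compact set $K\subset \R^N\times\R^N$ and $D_p H$ is continuous, $|\dot\xi^*(\tau)| = |D_p H(\xi^*(\tau),p^*(\tau))|$ is controlled by $\sup_K|D_p H|$, which furnishes the desired constant $\Chi_1$.
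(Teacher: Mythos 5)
Your derivation of the $\mathbf{L}^\alpha$-energy bound
\[
c_2\int_0^t|\dot\xi^*(s)|^\alpha\,ds \le 2\|u_0\|_{\mathbf{L}^\infty} + (c_1+c_3)\,t + c_1 t |x|
\]
by comparing the running cost along $\xi^*$ with the cost of the constant trial trajectory is exactly the paper's first step. After that, however, you diverge in both parts, and the two routes are worth contrasting.

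\textbf{Part (i).} You pass through H\"older, $|x-\xi^*(\tau)|^\alpha \le t^{\alpha-1}\,\frac{1}{c_2}\bigl(\,\cdots\,\bigr)$, then split on whether $|x-\xi^*(\tau)|$ exceeds $|x|/2$ and close with Young's inequality. This does yield a bound of the form $|x-\xi^*(\tau)|\le \tilde l_1(\|u_0\|_{\mathbf{L}^\infty},t)+|x|/2$, which is all that the rest of the paper actually needs. But you will \emph{not} be able to tune Young's parameter to recover the stated $l_1$: your route ends with $y^\alpha \le 2A + C(B,\alpha)$, i.e.\ an $\alpha$-th root of a sum, whereas \eqref{l1-def} is an \emph{additive} expression. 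The paper avoids H\"older entirely and instead integrates the pointwise inequality
\[
z \;\le\; \frac{c_2}{2(1+c_1 t)}\,z^\alpha \;+\; \Bigl(\tfrac{2(1+c_1 t)}{c_2}\Bigr)^{\!1/(\alpha-1)}\qquad(z\ge 0),
\]
applied to $z=|\dot\xi^*(s)|$, so that plugging in the $\mathbf{L}^\alpha$-bound immediately gives an additive estimate in which the $|x|$-term comes out with the prefactor $\tfrac{c_1t}{2(1+c_1t)}\le\tfrac12$; no case split and no Young step are needed, and the exact $l_1$ drops out. If you want the constant as stated, adopt that pointwise inequality; your current plan proves a weaker (but still usable) variant of the lemma.

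\textbf{Part (ii).} Your argument via the Hamiltonian system --- bound $|p^*(0)|\le\Lip[u_0]$ from $p^*(0)\in\partial u_0(\xi^*(0))$, then $|\dot p^*|\le c_4|\dot\xi^*|^\alpha+c_5$ from \textbf{(H2)} and $\dot\xi^*=D_pH(\xi^*,p^*)$, integrate to confine $(\xi^*,p^*)$ to a fixed compact set, and finally bound $|\dot\xi^*|=|D_pH(\xi^*,p^*)|$ by $\sup_K|D_pH|$ --- is correct and cleaner than what is in the paper, but it is a genuinely different closing step. The paper instead works in the Lagrangian picture: it uses the convexity inequality $L(\xi^*,\dot\xi^*)\le L(\xi^*,0)+\langle D_qL(\xi^*,\dot\xi^*),\dot\xi^*\rangle$, writes $D_qL(\xi^*(\tau),\dot\xi^*(\tau))$ as $D_qL(\xi^*(0),\dot\xi^*(0))+\int_0^\tau D_xL\,ds$ via Euler--Lagrange, invokes transversality and \eqref{subl-L3}, and arrives at a scalar inequality $c_2|\dot\xi^*(\tau)|^\alpha\le A+B|\dot\xi^*(\tau)|$ that it then solves to get an \emph{explicit} constant $\Chi_1$. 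The two are Legendre-dual to one another ($p^*=D_qL(\xi^*,\dot\xi^*)$, $D_xL=-D_xH$), so they track the same quantity; the paper's version is preferable only because it produces the concrete formula \eqref{chi1-def}, which is later used to write down $\mu_T$ and $\kappa_T$ explicitly. If an explicit formula is not required, your compactness argument is perfectly adequate.
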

\begin{proof} \

\noindent
{\bf 1.} Given $(t,x)\in [0,\infty[\,\times\mathbb{R}^{N}$, let
$\xi^*$ be a minimizer for $(CV)_{t,x}$.
First observe that since 
\begin{equation}
\nonumber
z \geqslant \bigg(\frac{2(1+c_1 t)}{c_2}\bigg)^{\!\!\frac{1}{\alpha-1}}
\quad\Longrightarrow\quad 
\frac{c_2}{2(1+c_1 t)} \, z^\alpha \geqslant  z \,,
\end{equation}
we deduce 
\begin{equation}
\label{est-xi-1}
\begin{aligned}
\big|x-\xi^*(\tau)\big|&=\big|\xi^*(t)-\xi^*(\tau)\big|
\leq \int_0^t \big|\dot\xi^*(s)\big| ds 
\\
&\leq \frac{c_2}{2(1+c_1 t)}  \int_0^t \big|\dot\xi^*(s)\big|^\alpha ds + 
t \, \bigg(\frac{2\, (1+c_1 t)}{c_2}\bigg)^{\!\!\frac{1}{\alpha-1}}\, \, 
\qquad\ \forall~\tau\in [0,t], \, x\in\R^N.
\end{aligned}
\end{equation}
Towards an estimation of $\int_0^t \big|\dot\xi^*(s)\big|^\alpha ds$, relying on~\eqref{subl-L2}
we derive
\begin{equation}
\label{est-xi-2}
c_2 \int_0^t \big|\dot\xi^*(s)\big|^\alpha ds \leq \int_0^t L(\xi^*(s),\dot{\xi^*}(s))ds+ c_3\, t\,.
\end{equation}
On the other hand, by definition~\eqref{value-function} and thanks to~\eqref{subl-L1},
we have
%
\begin{align}
\nonumber
u(t,x)&\geq 
-\|u_0\|_{\strut\mathbf{L}^\infty(\mathbb{R}^N)} +  \int_0^t L(\xi^*(s),\dot{\xi^*}(s))ds
\\
\noalign{\bigskip}
\nonumber
u(t,x)&\leq 
\|u_0\|_{\strut\mathbf{L}^\infty(\mathbb{R}^N)} + t\cdot L(x,0)
\\
\noalign{\smallskip}
\nonumber
&\leq
\|u_0\|_{\strut\mathbf{L}^\infty(\mathbb{R}^N)} + t\cdot
c_1(1+|x|),
\end{align}
%
which, together, yield
\begin{equation}
\label{est1-value-funct}
 \int_0^t L(\xi^*(s),\dot{\xi^*}(s))ds \leq 2 \|u_0\|_{\strut\mathbf{L}^\infty(\mathbb{R}^N)} + t\cdot
c_1(1+|x|).
\end{equation}
Thus, combining~\eqref{est-xi-2}, \eqref{est1-value-funct}, we find
\begin{equation}
\label{est-xi-3}
c_2 \int_0^t \big|\dot\xi^*(s)\big|^\alpha ds \leq 
2 \|u_0\|_{\strut\mathbf{L}^\infty(\mathbb{R}^N)} + (c_1+c_3) \, t + c_1\cdot |x|\, t\,.
\end{equation}
Hence, from~\eqref{est-xi-1}, \eqref{est-xi-3}, we deduce
\begin{equation}
\label{est-xi-4}
\big|x-\xi^*(\tau)\big|\leq
 \frac{\|u_0\|_{\strut\mathbf{L}^\infty(\mathbb{R}^N)}+(c_1+ c_3) \, t}{1+c_1 t} 
+t\,\bigg(\frac{2\, (1+c_1 t)}{c_2}\bigg)^{\!\!\frac{1}{\alpha-1}}
+\frac{|x|}{2}
\qquad\ \forall~\tau\in [0,t], 
\end{equation}
which proves~\eqref{est-char-1}. 
\\
{\bf 2.}
Towards a bound on $|\dot\xi^*|$, 
observe that, if $x\in [-l, l]^N$, because of~\eqref{est-char-1} we find
\begin{equation}
\label{est-xi-9}
\begin{aligned}
|\xi^*(\tau)|&\leq \big|\xi^*(\tau)-x\big|+|x|
\\
&\leq 
 l_{1}\big(\|u_0\|_{\strut\mathbf{L}^\infty(\mathbb{R}^N)},t\big)+\frac{3\, |x|}{2}
\\
&\leq 
 l_{1}\big(\|u_0\|_{\strut\mathbf{L}^\infty(\mathbb{R}^N)},t\big)+\frac{3\sqrt{N}\cdot  l}{2}
\end{aligned}
\quad \qquad\forall~\tau\in[0,t]\,.
\end{equation}
On the other hand, thanks to the convexity of $L(x,q)$
with respect to~$q$, relying on~\eqref{subl-L1}-\eqref{subl-L3}, \eqref{EL}, \eqref{transvers},~\eqref{est-xi-3}, \eqref{est-xi-9},
and recalling Remark~\ref{clarke-grad}, we derive
\begin{equation}
\begin{aligned}
\label{est-xi-5}
c_2 \big|\dot{\xi^*}(\tau)\big|^{\strut\alpha} &\! \leq\! c_3 + L\big(\xi^*(\tau), \dot{\xi^*}(\tau)\big)
\\
&\! \leq\!  c_3 + L\big(\xi^*(\tau), 0\big) + \big\langle D_q L\big(\xi^*(\tau), \dot{\xi^*}(\tau)\big),\, \dot{\xi^*}(\tau)\big\rangle 
\\
&\! \leq\!  c_1+ c_3 + c_1 \big|\xi^*(\tau)\big|+ \bigg\langle 
\int_0^\tau \frac{d}{ds}D_q L\big(\xi^*(s), \dot{\xi^*}(s)\big)ds +
D_q L\big(\xi^*(0), \dot{\xi^*}(0)\big),\, \dot{\xi^*}(\tau)\bigg\rangle 
\\
&\! \leq\!  c_1\!\cdot\! \Big(1\!+ \! l_{1}\Big(\!\|u_0\|_{\strut\mathbf{L}^\infty},t\Big)\!\Big)\!+\!\frac{3\sqrt{N}c_1 \!\cdot\! l}{2}
+ c_3 \! + \!
\bigg(\!\int_0^\tau \!\!\big|D_x L\big(\xi^*(s), \dot{\xi^*}(s)\big)\big|ds \!+\!\Lip[u_0]\bigg)\!\cdot\! \big|\dot{\xi^*}(\tau)\big|
\\
&\! \leq\!  c_1\!\cdot\! \Big(1\!+ \! l_{1}\Big(\!\|u_0\|_{\strut\mathbf{L}^\infty},t\Big)\!\Big)\!+\!\frac{3\sqrt{N}c_1 \!\cdot\! l}{2}
+ c_3 \! + \!
\bigg(\!c_4 \int_0^t \!\!\big|\dot{\xi^*}(s)\big|^\alpha ds +c_5 \, t +\Lip[u_0]\bigg)\!\cdot\! \big|\dot{\xi^*}(\tau)\big|
\\
&\! \leq\!  c_1\!\cdot\! \Big(1\!+ \! l_{1}\Big(\!\|u_0\|_{\strut\mathbf{L}^\infty},t\Big)\!\Big)\!+\!\frac{3\sqrt{N}c_1 \cdot l}{2}
+ c_3  + 
\\
&
+\Bigg(\frac{2\,c_4 \|u_0\|_{\strut\mathbf{L}^\infty}}{c_2}+ \frac{(c_1+c_3)c_4}{c_2} \, t + \frac{c_1\,c_4\cdot l\, t}{c_2}
+c_5 \, t +\Lip[u_0]\Bigg)\!\cdot\! \big|\dot{\xi^*}(\tau)\big|
\qquad\ \forall~\tau\in [0,t].
\end{aligned}
\end{equation}
Then, setting
\begin{equation}
\begin{gathered}
b_1\doteq b_1(l,t)= \frac{c_1}{c_2}\Big(1+\frac{3\sqrt{N}\, l}{2}\Big)+\frac{c_1+c_3}{c_2}
+\frac{c_1}{c_2} \cdot l_{1}\Big(\!\|u_0\|_{\strut\mathbf{L}^\infty},t\Big)\,, 
\\
\noalign{\medskip}
b_2\doteq 
b_2\big(\|u_0\|_{\strut\mathbf{L}^\infty(\mathbb{R}^N)}, \Lip[u_0], l, t\big)
= 
\frac{(c_1+c_3+c_1\cdot l)c_4+c_2\, c_5}{c_2^2} \, t +
\frac{2\,c_4 \|u_0\|_{\strut\mathbf{L}^\infty}}{c_2^2} + \frac{\Lip[u_0]}{c_2}\,,
\end{gathered}
\end{equation}
we obtain from~\eqref{est-xi-5} the estimate~\eqref{est-der-char-1}
with
\begin{equation}
\label{chi1-def}
\Chi_1\big(\|u_0\|_{\strut\mathbf{L}^\infty(\mathbb{R}^N)}, \Lip[u_0], l,t\big)\doteq 
\max\big\{b_1,\ (1+b_2)^{\frac{1}{\alpha-1}}
\big\}.
\end{equation}
%
\end{proof}
\begin{remark}
\label{l1bound-St0}
\rm
By the proof of Lemma~\ref{L1-bound-xi}
one deduces that the following further properties hold
\begin{itemize}
\item[(i)] Given any $l>0$, there exist constants $\tau_1(l), c_6(l)>0$
such that, for every $u_{0}\in \Lip(\mathbb{R}^N)\cap\mathbf{L}^\infty(\mathbb{R}^N)$
with $\|u_0\|_{\strut\mathbf{L}^\infty(\mathbb{R}^N)}\leq c_6(l)$,
the following hold:
\begin{itemize}
\item[--]
letting $\xi^*$ be any minimizer for~$(CV)_{t,x}$, $t\leq \tau_1(l)$, $x\in[-l,l]^N$, 
one has 
\begin{equation}
\label{xistar-bounds3}
\big|\xi^*(s)\big|\leq 2 l
\qquad\quad\forall~s\in[0,t]\,;
\end{equation}
\item[--]
letting $\xi^*$ be any  maximizer for
\begin{equation*}
\tag*{$\text{(CV)}^{t,x}$}
\max_{\xi\in AC([t,\tau_1(l)],\mathbb{R}^N)}\bigg\lbrace{u_0(\xi(\tau_1(l)))-
\int_t^{\tau_1(l)}L(\xi(s),\dot{\xi}(s))ds\ \Big|\ \xi(t)=x\ \bigg\rbrace},
\end{equation*}
with $t\leq \tau_1(l)$, $\xi^*( \tau_1(l))\in[-l,l]^N$, 
one has 
\begin{equation}
\label{xistar-bounds3b}
\big|\xi^*(s)\big|\leq 2 l
\qquad\quad\forall~s\in[t,\tau_1(l)]\,.
\end{equation}
\end{itemize}
\item[(ii)] Given any $l>0$, there exist constants $\tau_2(l), c_7(l)>0$
such that, for every $u_{0}\in \Lip(\mathbb{R}^N)\cap\mathbf{L}^\infty(\mathbb{R}^N)$
with $\|u_0\|_{\strut\mathbf{L}^\infty(\mathbb{R}^N)}\leq c_7(l)$,
letting $\xi^*$ be any minimizer for~$(CV)_{t,x}$, $t\leq \tau_2(l)$, $x\in\R^N\setminus \,]-l,l[^N$, one has 
\begin{equation}
\label{xistar-bounds4}
\big|\xi^*(s)\big|> \frac{|x|}{3}
\qquad\quad\forall~s\in[0,t]\,.
\end{equation}
\end{itemize}
Moreover, by definition~\eqref{value-function},
relying on~\eqref{subl-L1}, \eqref{subl-L2}
and on~\eqref{est-der-char-1},
one can show that:
\begin{itemize}
\item[(iii)] Given any $l>0$, there exists a constant $c_8(l)>0$
depending on $l$ such that there holds
\begin{equation}
\label{linf-St0-bound}
\big|S_t 0(x)\big|\leq c_8(l)\cdot t\qquad\quad\forall~x\in[-l, l\,]^N,
\ t >0\,. 
\end{equation} 
\end{itemize}
\end{remark}
\medskip

\begin{corollary}
\label{Supp} 
Let $H:\R^N\to \R$ be  a function satisfying  the assumptions~{\bf (H1)-(H2)}
and $\{S_t : \Lip (\mathbb{R}^N) \to \Lip (\mathbb{R}^N)\}_{t\geqslant 0}$ be the semigroup of viscosity solutions generated 
by~\eqref{HJ}. 
Then, 
letting $ \mathcal{L}_{[R,M]}$, $R,M>0$, 
be the set in~\eqref{CLM-1},
for any $u_{0}\in \mathcal{L}_{[R,M]}$, $t>0$, one has 
\begin{equation}
\label{supp-est-2}
\mathrm{supp}(S_t u_0 - S_t 0)\subseteq [-l_2 (R, M,t),\,  l_2 (R, M,t)]^N,
\end{equation}
where 
\begin{equation}
\label{compss}
l_2 (R, M,t)\doteq 2 \Bigg(
\frac{M\sqrt{N}\cdot R+(c_1+c_3)\,t}{1+c_1\,t }+t\, \bigg(\frac{2\, (1+c_1\,t )}{c_2}\bigg)^{\!\!\frac{1}{\alpha-1}}
\Bigg)+ 2\sqrt{N}\cdot R.
\end{equation}
%
%
\end{corollary}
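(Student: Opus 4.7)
The plan is to use the representation formula~\eqref{value-function} combined with Lemma~\ref{L1-bound-xi} to show that whenever $x\notin[-l_2(R,M,t),l_2(R,M,t)]^N$, every minimizer $\xi^*$ of $(CV)_{t,x}$---both for initial datum $u_0\in\mathcal{L}_{[R,M]}$ and for initial datum $0$---satisfies $\xi^*(0)\notin\mathrm{supp}(u_0)\subseteq[-R,R]^N$. Once this is in hand, $u_0(\xi^*(0))=0$ in both variational problems, so the two value functionals reduce to the same minimization with zero initial cost and hence must agree at $x$, giving~\eqref{supp-est-2}.

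I would begin with a uniform $\mathbf{L}^\infty$ bound on $u_0\in\mathcal{L}_{[R,M]}$. Continuity together with $\mathrm{supp}(u_0)\subseteq[-R,R]^N$ forces $u_0\equiv 0$ on $\partial[-R,R]^N$, and since $\Lip[u_0]\le M$ and the diameter of the cube equals $2\sqrt{N}\,R$, one obtains $\|u_0\|_{\mathbf{L}^\infty(\R^N)}\le M\sqrt{N}\,R$.

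Now fix $x\notin[-l_2,l_2]^N$, so $|x|\ge|x|_\infty>l_2(R,M,t)$, and let $\xi^*$ be any minimizer for $(CV)_{t,x}$ with initial datum $u_0$. Applying~\eqref{est-char-1} at $\tau=0$ and using the obvious monotonicity of $l_1$ in its first argument (immediate from~\eqref{l1-def}),
\[
|\xi^*(0)|\ \ge\ |x|-|x-\xi^*(0)|\ \ge\ \tfrac{|x|}{2}-l_1\bigl(\|u_0\|_{\mathbf{L}^\infty(\R^N)},t\bigr)\ \ge\ \tfrac{|x|}{2}-l_1\bigl(M\sqrt{N}R,t\bigr).
\]
A direct comparison of~\eqref{compss} with~\eqref{l1-def} shows that $l_2(R,M,t)/2=l_1(M\sqrt{N}R,t)+\sqrt{N}R$, so $|\xi^*(0)|>\sqrt{N}R$ and hence $\xi^*(0)$ lies outside $\overline{B(0,\sqrt{N}R)}\supseteq[-R,R]^N$, i.e.\ outside $\mathrm{supp}(u_0)$. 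The same bound applies a fortiori to any minimizer $\eta^*$ of $(CV)_{t,x}$ with zero initial datum (since $\|0\|_{\mathbf{L}^\infty}=0\le M\sqrt{N}R$), so the boundary terms in~\eqref{value-function} vanish in both problems:
\[
S_tu_0(x)=\int_0^t L(\xi^*(s),\dot\xi^*(s))\,ds\ \ge\ S_t 0(x),\qquad S_t 0(x)=\int_0^t L(\eta^*(s),\dot\eta^*(s))\,ds\ \ge\ S_t u_0(x).
\]
This forces $S_tu_0(x)=S_t 0(x)$ for every $x\notin[-l_2,l_2]^N$, which is exactly~\eqref{supp-est-2}. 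There is no substantial obstacle once Lemma~\ref{L1-bound-xi} is available; the only point requiring attention is the algebraic matching of~\eqref{l1-def} and~\eqref{compss} that calibrates the threshold $l_2$ so as to push \emph{both} families of minimizers out of the supporting cube.
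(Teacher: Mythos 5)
Your proof is correct and takes essentially the same route as the paper: both bound $\|u_0\|_{\mathbf{L}^\infty}$ by $M\sqrt{N}R$, invoke Lemma~\ref{L1-bound-xi} to force minimizers' starting points outside $[-R,R]^N$ whenever $x$ is outside the cube of side $2l_2$, and then compare the two value functions via the vanishing boundary cost. The only cosmetic difference is that the paper phrases the step as restricting both minimizations to the constrained class of curves satisfying the a-priori bound, whereas you work directly with the two optimal trajectories $\xi^*$ and $\eta^*$ and conclude by a two-sided comparison; these are interchangeable ways of packaging the same argument.
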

\begin{proof}
As observed in Section~\ref{subsec:HJ}, the Legendre transform $L$ of $H$ 
satisfies the
assumptions \textbf{(L1)-(L2)}. Thus, given $(t,x)\in [0,\infty[\,\times\mathbb{R}^{n}$,  
and $u_{0} \in \mathcal{L}_{[R,M]}$,
we can apply Lemma~\ref{L1-bound-xi} for the viscosity solutions to~\eqref{HJ}
with initial data $u_0$ and $0$.
Notice that, recalling~\eqref{CLM-1},
$u_{0}\in \mathcal{L}_{[R,M]}$  implies 
\begin{equation}
\label{supp-u0-20}
\mathrm{supp}(u_0)\subset [-R,R]^N
\end{equation}
and 
\begin{equation}
\label{ellinf-bound-1}
|u_0(x)|\leq M \sqrt{N}\cdot R
\qquad\ \forall~x\in\R^N.
\end{equation}
Then, employing the notation in~\eqref{l1-def}, set 
\begin{equation}
\label{l1bis-def}
l_1(u_0, t)\doteq l_{1}\big(M \sqrt{N}\cdot R,\,t\big)\,.
\end{equation}
Hence, relying on~\eqref{est-char-1} and recalling~\eqref{value-function} we find
\begin{equation}
\label{Stu0v0}
\begin{aligned}
S_t u_0 (x) &\doteq 
\min_{\xi\in AC([0,t],\mathbb{R}^N)}\bigg\{u_{0}(\xi(0))+\int_{0}^{t}L(\xi(s),\dot{\xi}(s))ds\ \Big|\ \xi(t)=x,
\\
&\hspace{2.8in} |x-\xi(0)|\leq l_1(u_0, t)+\frac{|x|}{2}\ \bigg\},
\\
\noalign{\bigskip}
S_t\, 0 (x) &\doteq 
\min_{\xi\in AC([0,t],\mathbb{R}^N)}\bigg\{\int_{0}^{t}L(\xi(s),\dot{\xi}(s))ds\ \Big|\ \xi(t)=x,
\\
&\hspace{2.8in} |x-\xi(0)|\leq l_1(u_0, t)+\frac{|x|}{2}\ \bigg\}.
\end{aligned}
\end{equation}
Observe now that, for every given $\xi\in AC([0,t],\mathbb{R}^N)$, if 
\begin{equation}
\label{xi-est-5}
\big|x-\xi(0)\big|\leq l_1(u_0, t)+\frac{|x|}{2},
\qquad\quad
|x|\geq 2 l_1(u_0, t)+ 2 \sqrt{N}\cdot R\,,
\end{equation}
it follows
\begin{equation}
\nonumber
\label{}
\begin{aligned}
|\xi(0)|& \geq |x|- \big|x-\xi(0)\big|
\\
\noalign{\smallskip}
&\geq \frac{|x|}{2}-l_1(u_0, t)
\\
\noalign{\smallskip}
&\geq \sqrt{N}\cdot R\,.
\end{aligned}
\end{equation}
Therefore, by~\eqref{supp-u0-20} we deduce
that for all $x\in\R^N$ satisfying~\eqref{xi-est-5}, one has  
\begin{equation}
\nonumber
u_0(\xi(0))= 0,
\end{equation}
which, in turn, because of~\eqref{Stu0v0} implies
\begin{equation}
\label{Stu0v02}
S_t u_0 (x) = S_t\,0 (x)\qquad\qquad\forall ~x\in\R^N\setminus B\big(0, \, 2 l_1(u_0, t)+ 2 \sqrt{N}\!\cdot \!R\big).
\end{equation}
This, proves~\eqref{supp-est-2} since, comparing~\eqref{l1-def}, \eqref{compss}, \eqref{l1bis-def}, we have
\begin{equation}
l_2 (R, M,t) = 2 l_1(u_0,  t)+ 2 \sqrt{N}\!\cdot \!R.
\end{equation}
\end{proof}
\pagebreak

\begin{proposition}
\label{Properties of u} 
In the same setting of Corollary~\ref{Supp}, for every $u_0\in \Lip(\R^N)\cap \mathbf{L}^\infty(\mathbb{R}^N)$ and for any $l,T>0$,
the following properties hold true:
\begin{enumerate}
\item[(i)]  $S_T u_0$ is Lipschitz continuous on $[-l, l]^N$ with a Lipschitz constant
$\mu(\|u_0\|_{\strut\mathbf{L}^\infty}, \Lip[u_0],l,T)$
depending on $\|u_0\|_{\strut\mathbf{L}^\infty}$, 
$\Lip[u_0],l,T$.
%
\item [(ii)]  $S_T(u_0)$ is semiconcave on $]\!-l,\,  l[^{\strut N}$  with  a semiconcavity constant 
$\kappa(\|u_0\|_{\strut\mathbf{L}^\infty}, \Lip[u_0],l,T)$
depending on $\|u_0\|_{\strut\mathbf{L}^\infty}$, 
$\Lip[u_0],l,T$.
\end{enumerate}
\end{proposition}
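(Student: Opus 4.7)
The plan is to exploit the variational representation \eqref{value-function} together with competitor-trajectory arguments, relying on the a-priori bounds on minimizers supplied by Lemma~\ref{L1-bound-xi}. The same construction (linear interpolation between a minimizer and a nearby target) handles both the Lipschitz and the semiconcavity estimate, differing only in whether one keeps first-order or second-order terms in the Taylor expansion of $L$.

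For part (i), fix $x,y\in[-l,l]^N$ with $|y-x|$ small, let $\xi^*$ be a minimizer for $(CV)_{T,x}$, and define the competitor $\eta(s):=\xi^*(s)+\tfrac{s}{T}(y-x)$. Since $\eta(0)=\xi^*(0)$ and $\eta(T)=y$, using $\eta$ as a trial curve in \eqref{value-function} yields
\begin{equation*}
S_T u_0(y)-S_T u_0(x)\leq \int_0^T\!\Big[L\big(\xi^*+\tfrac{s}{T}(y-x),\,\dot\xi^*+\tfrac{1}{T}(y-x)\big)-L(\xi^*,\dot\xi^*)\Big]ds.
\end{equation*}
By Lemma~\ref{L1-bound-xi}, applied with $l$ replaced by a fixed enlargement $l+1$, the pair $(\xi^*(s),\dot\xi^*(s))$ lies in a compact set $\mathcal{K}\subset\R^N\times\R^N$ whose size depends only on $\|u_0\|_{\mathbf{L}^\infty}$, $\Lip[u_0]$, $l$ and $T$. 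The mean value theorem applied to $L\in C^2$ on a neighborhood of $\mathcal{K}$ bounds the integrand by $\bigl(\|D_x L\|_{\mathbf{L}^\infty(\mathcal{K})}+\|D_q L\|_{\mathbf{L}^\infty(\mathcal{K})}/T\bigr)\cdot|y-x|$, hence $S_T u_0(y)-S_T u_0(x)\leq \mu\cdot|y-x|$. Swapping $x$ and $y$ gives the desired Lipschitz bound.

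For part (ii), fix $x\in\,]\!-l,l[^N$ and $h\in\R^N$ small enough that $x\pm h\in\,]\!-l,l[^N$. Let again $\xi^*$ be a minimizer for $(CV)_{T,x}$ and set $\eta_\pm(s):=\xi^*(s)\pm\tfrac{s}{T}h$, so that $\eta_\pm(0)=\xi^*(0)$ and $\eta_\pm(T)=x\pm h$. Plugging $\eta_\pm$ into \eqref{value-function} and summing, the boundary terms $u_0(\xi^*(0))$ cancel and
\begin{equation*}
S_T u_0(x+h)+S_T u_0(x-h)-2 S_T u_0(x)\leq \int_0^T\!\Delta_h L(s)\,ds,
\end{equation*}
where
\begin{equation*}
\Delta_h L(s):=L\big(\xi^*+\tfrac{s}{T}h,\,\dot\xi^*+\tfrac{1}{T}h\big)+L\big(\xi^*-\tfrac{s}{T}h,\,\dot\xi^*-\tfrac{1}{T}h\big)-2L(\xi^*,\dot\xi^*).
\end{equation*}
A second-order Taylor expansion of $L\in C^2$ about $(\xi^*(s),\dot\xi^*(s))$ gives $\Delta_h L(s)\leq \|D^2 L\|_{\mathbf{L}^\infty(\mathcal{K})}\cdot\bigl(|sh/T|^2+|h/T|^2\bigr)$ (the first-order terms cancel by symmetry), and integration in $s\in[0,T]$ produces a bound of the form $\kappa\,|h|^2$ with $\kappa$ depending only on $\|u_0\|_{\mathbf{L}^\infty}$, $\Lip[u_0]$, $l$ and $T$.

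The main technical point is ensuring that the compact set $\mathcal{K}$ produced by Lemma~\ref{L1-bound-xi} accommodates not only the minimizer $\xi^*$ but also the full pencil of competitor curves $\eta$, $\eta_\pm$ as $y$ ranges in a neighborhood of $x$ (resp.\ as $h$ ranges in a small ball). This is handled by enlarging $l$ by a fixed universal amount when invoking Lemma~\ref{L1-bound-xi}, so that the translates $\xi^*\pm\tfrac{s}{T}h$ and $\dot\xi^*\pm\tfrac{1}{T}h$ remain in a fixed neighborhood of $\mathcal{K}$. Once this is arranged, both conclusions reduce to routine applications of the $C^1$ and $C^2$ smoothness of $L$ on a fixed compact subset of phase space.
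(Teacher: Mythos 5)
Your proposal is correct and follows essentially the same strategy as the paper: represent $S_T u_0$ via the value function $(CV)_{T,x}$, fix a minimizer $\xi^*$, build explicit competitor trajectories hitting the nearby targets, and bound the resulting difference quotients using the a-priori compactness of $(\xi^*,\dot\xi^*)$ from Lemma~\ref{L1-bound-xi} together with the $C^2$ regularity of $L$ on a fixed compact set. For part (ii) you use precisely the paper's competitors $\xi^\pm(s)=\xi^*(s)\pm\frac{s}{T}h$, and the Taylor/mean-value bound you derive reproduces the paper's estimate up to constants. The one genuine (and harmless) deviation is in part (i): the paper translates the whole curve by a constant, $\xi(\tau)=\xi^*(\tau)+(y-x)$, which leaves $\dot\xi=\dot\xi^*$ unchanged and places the cost of moving the endpoint in the initial term $u_0(\xi(0))-u_0(\xi^*(0))\leqslant \Lip[u_0]\,|y-x|$; you instead use the linear interpolation $\eta(s)=\xi^*(s)+\frac{s}{T}(y-x)$, which makes the initial term vanish but perturbs $\dot\xi^*$ by $(y-x)/T$, shifting the $\Lip[u_0]$-dependence into the $D_q L$ bound via the size of $\dot\xi^*$. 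Both give a Lipschitz constant with the claimed dependence. Two small remarks worth recording: first, your restriction to $|y-x|$ and $|h|$ small is legitimate (uniform local Lipschitz/semiconcavity estimates on a convex set propagate to global ones by chaining), but should be stated; second, the enlargement of $l$ you invoke must scale with $l$ (since $|y-x|$, $|h|$ can reach $2\sqrt{N}\,l$ in the global statement) and with $1/T$ for the $\dot\xi$-component, rather than being a ``fixed universal amount''; the paper makes this explicit in the definitions of $r_T$ and $q_T$ and in the tilded radii $\widetilde r_T$, $\widetilde q_{1,T}$, $\widetilde q_{2,T}$.
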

\begin{proof}\

\noindent
{\bf 1.}
Given any $x\in [-l,l]^N$, let $\xi^*\in AC([0,T],\mathbb{R}^N)$ be a minimizer for $(CV)_{T,x}$,
so that one has 
\begin{equation}
\label{xix-eq1}
\xi^*(T)=x,
\qquad\qquad
S_{T} u_0(x)=u_{0}(\xi^*(0))+\int_{0}^{T}L(\xi^*(s),\dot{\xi^*}(s))ds\,.
\end{equation}
As observed in the proof of Corollary~\ref{Supp}, the Legendre transform $L$ of $H$ 
satisfies the
assumptions \textbf{(L1)-(L2)}
and thus we can apply Lemma~\ref{L1-bound-xi}.
Then, relying on~\eqref{est-xi-9}, \eqref{est-der-char-1},  we find
\begin{equation}
\label{xix-eq2}
\begin{aligned}
\big|\xi^*(\tau)\big|
&\leq r_{\scriptscriptstyle{T}}\doteq 
 l_{1}\big(\|u_0\|_{\strut\mathbf{L}^\infty(\mathbb{R}^N)},T\big)+\frac{3\, \sqrt{N}\cdot l}{2},
\\
\noalign{\smallskip}
\big|\dot{\xi^*}(\tau)\big|
&\leq q_{\scriptscriptstyle{T}}\doteq 
 \Chi_1\big(\|u_0\|_{\strut\mathbf{L}^\infty(\mathbb{R}^N)}, \Lip[u_0], l,T\big),
\end{aligned}
\quad \qquad\forall~\tau\in[0,T]\,.
\end{equation}
Next, set
\begin{equation}
\label{beta1-def}
\begin{aligned}
\beta_{\scriptscriptstyle{1,T}}&\doteq 
\sup\bigg\{
\max\big\{\big|D_x L(x,q)\big|,\big|D_q L(x,q)\big|\big\}\ \Big| \ |x|\leq r_{\scriptscriptstyle{T}}+2\, \sqrt{N}\cdot l, \ |q|\leq q_{\scriptscriptstyle{T}}
\bigg\}\,,
\end{aligned}
\end{equation}
and, given any $y\in  [-l,l]^N$, consider the map $\xi \in AC([0,T],\mathbb{R}^N)$
defined by
\begin{equation}
\label{xiy-def}
\xi (\tau)=\xi^*(\tau)+y-x\qquad\forall~\tau\in [0,T].
\end{equation}
Notice that, by~\eqref{xix-eq1}, \eqref{xix-eq2}, one has 
\begin{equation}
\label{xix-eq3}
\begin{gathered}
\xi(T)=y,
\\
\noalign{\smallskip}
\big|\xi(\tau)\big|
\leq r_{\scriptscriptstyle{T}}+2\, \sqrt{N}\cdot l,\qquad\quad
\big|\dot\xi(\tau)\big|
\leq q_{\scriptscriptstyle{T}}
\qquad\forall~\tau\in [0,T].
\end{gathered}
\end{equation}
Thus, recalling~\eqref{value-function} and because of~\eqref{xix-eq1}, \eqref{xix-eq2}, \eqref{xix-eq3}, we obtain
\begin{equation}
\label{lipest-1}
\begin{aligned}
S_{T} u_{0}(y)-S_{T} u_{0}(x)&\leq u_{0}(\xi(0))-u_{0}(\xi^{*}(0)) + 
\int_{0}^{T}\Big(L(\xi(s),\dot{\xi}(s))-L(\xi^{*}(s),\dot{\xi^*}(s))\Big)ds
\\
&\leq \Lip[u_0]\cdot \big|\xi(0)-\xi^*(0)\big|+
\int_{0}^{T}\beta_{\scriptscriptstyle{1,T}}\Big(\big|\xi(s)-\xi^*(s)\big|+\big|\dot\xi(s)-\dot{\xi^*}(s)\big|\Big)ds
\\
\noalign{\medskip}
&\leq \big(\Lip[u_0]+2\beta_{\scriptscriptstyle{1,T}}\,T\big)\cdot |y-x|\,.
\end{aligned}
\end{equation}
Performing a similar computation interchanging the role of $x$ and $y$, considering
a minimizer $\xi^*$  for $(CV)_{T,y}$ and a map $\xi \in AC([0,T],\mathbb{R}^N)$
as in~\eqref{xiy-def},
we find
\begin{equation}
\label{lipest-2}
S_{T}u_{0}(x)-S_{T}u_{0}(y)\leq \big(\Lip[u_0]+2\beta_{\scriptscriptstyle{1,T}}\,T\big)\cdot |y-x|.
\end{equation}
Thus, \eqref{lipest-1}-\eqref{lipest-2} together yield the Lipschitz continuity of $S_T u_0$ on
$[-l, l]^N$ with a Lipschitz constant
\begin{equation}
\label{mu-def}
\mu(\|u_0\|_{\strut\mathbf{L}^\infty}, \Lip[u_0],l,T)
\doteq \Lip[u_0]+2\beta_{\scriptscriptstyle{1,T}}\,T\,,
\end{equation}
proving (i).
\\

\noindent
{\bf 2.}
Given any $x, x-h, x+h \in [-l,l]^N$, let $\xi^*\in AC([0,T],\mathbb{R}^N)$ be a minimizer for $(CV)_{T,x}$,
and consider the maps $\xi^+, \xi^- \in AC([0,T],\mathbb{R}^N)$
defined by
\begin{equation}
\label{xih-def}
\xi^\pm (\tau)=\xi^*(\tau)\pm \frac{\tau}{T} \cdot h\qquad\forall~\tau\in [0,T].
\end{equation}
Notice that, by~\eqref{xix-eq1}, \eqref{xix-eq2}, one has 
\begin{equation}
\label{xix-eq4}
\begin{gathered}
\xi^\pm(0)=\xi^*(0),
\qquad\quad
\xi^\pm(T)=x\pm h,
\\
\noalign{\smallskip}
\big|\xi^\pm(\tau)\big|
\leq r_{\scriptscriptstyle{T}}+2\, \sqrt{N}\cdot l,\qquad\quad
\big|\dot{\xi^\pm}(\tau)\big|
\leq q_{\scriptscriptstyle{T}}+2\, \sqrt{N}\cdot \frac{l}{T}
\qquad\forall~\tau\in [0,T].
\end{gathered}
\end{equation}
Then, setting
\begin{equation}
\label{beta2-def}
\begin{aligned}
\beta_{\scriptscriptstyle{2,T}}&\doteq 
\sup\bigg\{
\max\big\{\big|D^2_{xx} L(x,q)\big|,\big|D^2_{qx} L(x,q)\big|,\big|D^2_{qq} L(x,q)\big|\big\}\ \Big| \ |x|\leq r_{\scriptscriptstyle{T}}+2\, \sqrt{N}\cdot l, \\
&\hspace{3.77in} |q|\leq q_{\scriptscriptstyle{T}}+2\, \sqrt{N}\cdot \frac{l}{T}
\bigg\}\,,
\end{aligned}
\end{equation}
recalling~\eqref{value-function} and relying on~\eqref{xix-eq1}, \eqref{xix-eq2}, \eqref{xix-eq4},  we have
\begin{equation}
\begin{aligned}
S_T u_0&(x+h)+S_T u_0(x-h)-2 S_T u_0(x)\leq
\\
\noalign{\smallskip}
&\leq \int_0^TL(\xi^+(s),\dot{\xi^+}(s))ds+ \int_0^TL(\xi^-(s),\dot{\xi^-}(s))ds-2 \int_0^TL(\xi^*(s),\dot{\xi^*}(s))ds
\\
\noalign{\smallskip}
&=\int_0^T  \bigg(L\Big(\xi^*(s)+\frac{s}{T}h,\dot{\xi^*}(s)\Big)+ L\Big(\xi^*(s)-\frac{s}{T}h,\dot{\xi^*}(s)\Big)-
2 L(\xi^*(s),\dot{\xi^*}(s))\bigg)ds \,+
\\
&\quad +\!\!\int_0^T\!\!\!\Big( L(\xi^+(s),\dot{\xi^+}(s))-L(\xi^+(s),\dot{\xi^*}(s))\Big)ds+\!\!\int_0^T\!\!\!\Big(L(\xi^-(s),\dot{\xi^-}(s))-
L(\xi^-(s),\dot{\xi^*}(s))\Big)ds
\\
\noalign{\smallskip}
&\leq\beta_{\scriptscriptstyle{2,T}}\cdot\!\!\int_0^T\!\!\!\Big(\frac{s}{T}\Big)^{\!2}|h|^2ds\,+
\\
&\quad +\int_0^T 
\!\!\!\bigg(
\int_0^1\Big(
\Big\langle D_q L \Big(\xi^+(s),\dot{\xi^*}(s)+\lambda\frac{h}{T}\Big),\frac{h}{T}\Big\rangle
+\Big\langle D_q L\Big(\xi^-(s),\dot{\xi^*}(s)-\lambda\frac{h}{T}\Big)\Big), \frac{-h}{T}\Big\rangle
\Big) d\lambda\bigg)ds
\\
\noalign{\smallskip}
&\leq\frac{\beta_{\scriptscriptstyle{2,T}}}{3}\,T\cdot \!|h|^2+\frac{|h|}{T}\cdot\!\!\int_0^T\!\!\!\bigg(
\int_0^1\Big| D_q L \Big(\xi^+(s),\dot{\xi^*}(s)+\lambda\frac{h}{T}\Big)-
D_q L\Big(\xi^-(s),\dot{\xi^*}(s)-\lambda\frac{h}{T}\Big) \Big| d\lambda\bigg)ds
\\
\noalign{\smallskip}
&\leq\frac{\beta_{\scriptscriptstyle{2,T}}}{3}\,T\cdot |h|^2+\beta_{2,T}\frac{|h|}{T}\cdot\int_0^T\bigg(\frac{2s}{T}|h|+\frac{2}{T}|h|\bigg)ds
\\
\noalign{\smallskip}
&\leq \beta_{\scriptscriptstyle{2,T}}\bigg(\frac{T}{3}+1+\frac{1}{T}\bigg)\cdot|h|^2.
\end{aligned}
\end{equation}
Thus,  $S_T u_0$ is semiconcave on
$]\!-l, l\,[^N$ with a semiconcavity constant
\begin{equation}
\label{kappa-def}
\kappa(\|u_0\|_{\strut\mathbf{L}^\infty}, \Lip[u_0],l,T)
\doteq
\beta_{\scriptscriptstyle{2,T}}\bigg(1+\frac{3+T^2}{3T}\bigg)\,,
\end{equation}
proving (ii).
\end{proof} 
\begin{remark}
\label{lipschitzianity}
\rm
By the proof  of Lemma~\ref{L1-bound-xi} and Proposition~\ref{Properties of u} it follows that,
under the same hypotheses of Corollary~\ref{Supp},  given any $c,l,m>0$ there
exists a constant $\tau_3(c,l,m)>0$
depending on $c, l$ and $m$,
such that the following holds. 
For every  
$u_0\in \Lip(\R^N)\cap \mathbf{L}^\infty(\mathbb{R}^N)$ 
 with $\|u_0\|_{\strut\mathbf{L}^\infty}\leq c$ and $\Lip[u_0]\leq m$,
the map $S_t u_0$ is Lipschitz continuous on $[-l, l]^N$ with 
$\Lip\big[S_t u_0; [-l,l]^N\big] \leq 2\cdot  m$, for all $t\leq \tau_3(c,l,m)$.
\end{remark}
\medskip

\medskip

\subsection{Continuity of the semigroup map ${\bf S_t},$ ${\bf t>0}$}

\noindent
It was shown in~\cite{APN} that, for every fixed $t\geq 0$, the map 
$S_t:\Lip(\mathbb{R}^N)\rightarrow\Lip_{loc}(\mathbb{R}^N)$ is continuous 
when the space 
$\Lip(\mathbb{R}^N)$ is endowed with the $\mathbf{W}^{1,1}_{loc}$-topology
and $S_t$ is restricted to  sets of functions with uniform Lipschitz constant
on bounded domains. The proof of this property was obtained in~\cite{APN}
exploiting the Hopf-Lax representation formula of solutions valid for Hamiltonians depending
only on the gradient of the solution. We shall extend here this result to
the case of Hamiltonians possibly depending also on the space variable
providing a direct proof of this property that relies only on the a-priori bounds on the solutions established in 
Section~\ref{subsec:bounds-vf}.
%
%
\begin{proposition}
\label{St-W11-continuity}
Let $u, u^\nu\in \Lip(\mathbb{R}^N)$  $(\nu \in \N)$ be such that
\begin{gather}
\label{unu-bound}
\|u^\nu\|_{\strut\mathbf{L}^\infty(\R^N)}\leqslant C\,,
\qquad\forall~\nu\,,
\qquad\text{for some} \ \ C>0\,,
\\
\noalign{\smallskip}
\label{unu-conv}
u^\nu \ \ \underset{\nu\to\infty}\longrightarrow \ \  u 
\qquad\text{in}\qquad \mathbf{W}^{1,1}_{loc}(\R^N)\,.
\end{gather}
Moreover, assume that
for every bounded domain $\Omega\subset\R^N$, there exists some constant $M_{\scriptscriptstyle{\Omega}}>0$
such that
\begin{equation}
\label{unu-bound2}
\Lip\big[u^\nu;\, {\Omega}\big]
\leqslant M_{\scriptscriptstyle{\Omega}}\qquad\forall~\nu\,.
\end{equation}
Then, for every fixed $t\geq 0$, one has
\begin{equation}
\label{conv-semigr}
S_t u^\nu\ \ \underset{\nu\to\infty}\longrightarrow \ \  S_t u 
\qquad\text{in}\qquad \mathbf{W}^{1,1}_{loc}(\R^N)\,.
\end{equation}
\end{proposition}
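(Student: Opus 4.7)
The plan is to prove~\eqref{conv-semigr} by fixing an arbitrary $l>0$ and establishing convergence in $\mathbf{W}^{1,1}([-l,l]^N)$, which suffices since $l$ is arbitrary. I will carry out three steps: (a) extract uniform Lipschitz and semiconcavity bounds on the solutions $S_t u^\nu$; (b) upgrade the $\mathbf{W}^{1,1}_{loc}$-convergence of the data to uniform convergence on compact sets for the solutions; (c) convert uniform convergence plus uniform semiconcavity into $\mathbf{W}^{1,1}$-convergence of the gradients.

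For step (a), I would use the a-priori bound~\eqref{est-char-1} of Lemma~\ref{L1-bound-xi}: given the uniform $\mathbf{L}^\infty$-bound~\eqref{unu-bound}, every minimizer $\xi^*$ of $(CV)_{t,x}$ for $x\in[-l,l]^N$ and initial data $u^\nu$ (or $u$) remains confined in a ball $B(0,r)$ with $r=r(C,l,t)$ independent of $\nu$. A quick inspection of the proofs of Lemma~\ref{L1-bound-xi} and Proposition~\ref{Properties of u} shows that the global Lipschitz constant of the initial datum enters only through~\eqref{transvers} and~\eqref{lipest-1}, evaluated at endpoints already constrained to $B(0,r)$; it may therefore be replaced by $\Lip[u^\nu;B(0,r)]\leqslant M_{B(0,r)}$ as furnished by~\eqref{unu-bound2}. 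Proposition~\ref{Properties of u} then yields uniform constants $\mu^*,\kappa^*$ (independent of $\nu$) such that each $S_t u^\nu$ is $\mu^*$-Lipschitz on $[-l,l]^N$ and $\kappa^*$-semiconcave on $(-l,l)^N$.

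For step (b), a direct comparison via the Bolza representation~\eqref{value-function} (inserting into each problem an optimal trajectory of the other) gives the pointwise bound
\begin{equation*}
\bigl\|S_t u^\nu - S_t u\bigr\|_{\mathbf{L}^\infty([-l,l]^N)} \leqslant \bigl\|u^\nu - u\bigr\|_{\mathbf{L}^\infty(B(0,r))}.
\end{equation*}
The hypothesis~\eqref{unu-bound2} together with~\eqref{unu-bound} renders the family $\{u^\nu\}$ relatively compact in $C(\overline{B(0,r)})$ by Ascoli--Arzel\`a, and~\eqref{unu-conv} identifies $u$ as the unique accumulation point; hence $u^\nu\to u$ uniformly on $B(0,r)$, and consequently $S_t u^\nu \to S_t u$ uniformly on $[-l,l]^N$.

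For step (c), I would introduce the concave shifts $\widetilde{v}^\nu(x)\doteq S_t u^\nu(x)-\tfrac{\kappa^*}{2}|x|^2$ and $\widetilde{v}(x)\doteq S_t u(x)-\tfrac{\kappa^*}{2}|x|^2$ on $(-l,l)^N$ (cfr.~Remark~\ref{semiconc-prop-2}), which are uniformly Lipschitz by step~(a) and uniformly convergent by step~(b). A classical result on convex/concave functions guarantees that pointwise convergence implies almost-everywhere convergence of the gradients at points of differentiability of the limit, and the uniform gradient bound then allows Lebesgue's dominated convergence theorem to conclude $\nabla\widetilde{v}^\nu\to\nabla\widetilde{v}$ in $\mathbf{L}^1([-l+\delta,l-\delta]^N)$ for every $\delta>0$, which amounts to~\eqref{conv-semigr}. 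The main obstacle is precisely this last step: the $\mathbf{L}^\infty_{loc}$-stability in step~(b) is relatively routine, but lifting it to $\mathbf{W}^{1,1}_{loc}$-convergence is not automatic and depends crucially on the uniform semiconcavity delivered by Proposition~\ref{Properties of u}-(ii), which reduces the gradient convergence to a classical statement about convex functions.
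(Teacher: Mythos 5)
Your proof follows the same route as the paper: uniform Lipschitz and semiconcavity bounds via Lemma~\ref{L1-bound-xi} and Proposition~\ref{Properties of u}, $\mathbf{L}^\infty_{loc}$-stability of $S_t$, and then passage to the limit in the semiconcavity inequality (equivalently, pointwise gradient convergence for concave functions) combined with the uniform gradient bound to conclude $\mathbf{L}^1$-convergence of the gradients. The one difference worth noting is in your step (b): you derive the stability estimate $\|S_t u^\nu - S_t u\|_{\mathbf{L}^\infty([-l,l]^N)} \leqslant \|u^\nu - u\|_{\mathbf{L}^\infty(B(0,r))}$ directly from the Bolza representation~\eqref{value-function} by exchanging optimal trajectories, while the paper first localizes the Hamiltonian to a compact set where it is Lipschitz in both variables and then cites the contraction property of the semigroup (\cite[Theorem~5.2.12]{CS}); your variant is self-contained and slightly cleaner. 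Your step (c) invokes as a black box the classical result that pointwise convergence of concave functions forces convergence of gradients at differentiability points of the limit, which the paper proves directly by taking $\nu\to\infty$ in the semiconcavity inequality~\eqref{eq:semiconcave-grad} and using Theorem~\ref{Pro-Semi}-(iv); these are the same argument.
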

\begin{proof}\ \\
{\bf 1.}
In order to establish the proposition it will be sufficient to show that, 
given any bounded domain $\Omega\subset\R^N$, for any fixed $t> 0$, 
there holds
\begin{equation}
\label{conv-semigr-2}
S_t u^\nu \ \ \underset{\nu\to\infty}\longrightarrow \ \  S_t u 
\qquad\text{in}\qquad \mathbf{W}^{1,1}(\Omega)\,.
\end{equation}
Consider the set
\begin{equation}
\label{o-prime-def}
\Omega_\tau\doteq\Big\{x\in\R^N\, \big| \, d(x,\Omega)\leq l_1\big(C, \tau\big)+2\cdot  \big(\sup_{y\in\Omega}|y|+\mathrm{diam}(\Omega)\big)\Big\}\,,
\qquad\tau\in[0, t]\,,
\end{equation}
where $ l_1(C, \tau)$ is defined as in~\eqref{l1-def} with $C$ in place of $\|u_0\|_{\mathbf{L}^\infty}$.
Observe that, because of~\eqref{unu-bound}, \eqref{unu-bound2}, applying Lemma~\ref{L1-bound-xi}
we deduce that, for any $x\in\Omega$, letting
$\xi^*$ be a minimizer for $(CV)_{t,x}$, 
one has 
\begin{equation}
\label{xistar-bounds}
\begin{gathered}
\xi^*(\tau)+(y-x)\in\Omega_{t-\tau}\qquad \forall~y\in\Omega,
~\tau\in[0,t]\,,
\\
\noalign{\medskip}
|\dot{\xi^*}(\tau)|\leq \Chi_t
\qquad\ \forall~\tau\in[0,t]\,,
\end{gathered}
\end{equation}
for some constant $\Chi_t$ depending on $C, M_{\scriptscriptstyle{\Omega_t}}, \Omega_t, t$.
Then, with the same arguments of the proof of Proposition~\ref{Properties of u} we deduce that
there exist constants $\mu_t, \kappa_t>0$ so that:
\begin{itemize}
\item[(i)] $S_\tau u^\nu$ is Lipschitz continuous  on $\Omega_{t-\tau}$ with \ $\Lip[S_\tau u^\nu; \Omega_{t-\tau}]\leq \mu_t$ \ for all $\nu\in \N$, $\tau\in[0,t]$\,;
\item[(ii)] $S_t u^\nu$ is semiconcave on $\Omega$ with semiconcavity constant $\kappa_t$ \ for all $\nu\in\N$\,.
\end{itemize}
Thanks to the uniform bound in (i),  in order to prove that
\begin{equation}
\label{conv-semigr-nabla-L1}
\nabla S_t u^\nu \ \ \underset{\nu\to\infty}\longrightarrow \ \  \nabla S_t u 
\qquad\text{in}\qquad \mathbf{L}^{1}(\Omega)\,,
\end{equation}
it will be sufficient to show that
\begin{equation}
\label{grad-Stunu-conv}
\nabla S_t u^\nu (x)
\ \ \underset{\nu\to\infty}\longrightarrow \ \ 
\nabla S_t u (x)
\qquad\text{for \ a.e.}\ \ x\in\Omega\,.
\end{equation}
On the other hand, 
notice that because of~\eqref{unu-bound2}, 
we have
\begin{equation}
\label{equicont}
\big|u^\nu(x)-u^\nu(y)\big|\leq M_{\scriptscriptstyle{\Omega_t}} \cdot \big|x-y\big|\qquad\quad\forall~x,y\in\overline{\Omega_t}\,,
\qquad\quad\forall~\nu\,.
\end{equation}
Hence, relying on~\eqref{unu-bound}, \eqref{equicont}, and applying the Ascoli-Arzel\`a compactness theorem
we find that 
\begin{equation}
\label{univ-donv-unu}
u^\nu \ \ \underset{\nu\to\infty}\longrightarrow \ \ u\qquad\text{uniformly on}\quad\Omega_t\,.
\end{equation}
Observe now that, by~\eqref{xistar-bounds} and because of (i), the value of a solution
to~\eqref{HJ} with initial datum~$u^\nu$
depends
at any point  $(t,x), \, x\in\Omega$ only by the values of the Hamiltonian $H$ on the bounded
domain 
\begin{equation}
\bigg(\bigcup_{\tau\in[0,t]} \Omega\tau \bigg)\times B(0, \mu_t).
\end{equation}
Since the restriction of $H(x,p)$ to such a domain is uniformly Lipschitz continuous in both
variables, we may invoke the well-known contraction property of the semigroup map $S_t$ with respect to the
uniform convergence on compacta, which holds for Hamiltonians $H$ enjoying this property (e.g. see~\cite[Theorem~5.2.12]{CS}).
Thus, we deduce from~\eqref{univ-donv-unu} that 
\begin{equation}
\label{univ-donv-Stunu}
S_t u^\nu \ \ \underset{\nu\to\infty}\longrightarrow \ \ S_t u\qquad\text{uniformly on}\quad\Omega\,,
\end{equation}
which, in turn, implies
\begin{equation}
\label{elleuno-Stunu}
S_t u^\nu \ \ \underset{\nu\to\infty}\longrightarrow \ \ S_t u\qquad\text{in}\qquad \mathbf{L}^{1}(\Omega)\,.
\end{equation}
\\
\noindent
{\bf 2.}
Towards a proof of~\eqref{grad-Stunu-conv},
let $\Omega'$ be a subset of $\Omega$, with $\text{meas}(\Omega')=\text{meas}(\Omega)$,
where all $S_t u^\nu$, $\nu\in\N$, and $S_t u$ are differentiable.
Then, invoking properties (iv), (vi) stated in Theorem~\ref{Pro-Semi}, and relying on property~(ii) above,
we infer that, at every $x\in\Omega'$, there holds
\begin{equation}
\label{semiconc-Stnu}
S_t u^\nu(x+h) - S_t u^\nu(x) - \big\langle\nabla S_t u^\nu(x), h\big\rangle \leq \frac{\kappa_t}{2}\cdot  |h|^2
\end{equation}
for all $h\in\R^N$ such that $[x,x+h]\subset \Omega$. 
Since $\nabla S_t u^\nu(x)$ are uniformly bounded
by property (i) above, let $p\in\R^N$ be any accumulation point
i.e.  such that
\begin{equation}
\label{gradStnu-conv}
\nabla S_t u^{\nu_k} (x)
\ \ \underset{k\to\infty}\longrightarrow \ \  p\,,
\end{equation}
for some subsequence $\{\nabla S_t u^{\nu_k}\}_k$.
Then, taking the limit in~\eqref{semiconc-Stnu}
of $S_t u^{\nu_k}, \, \nabla S_t u^{\nu_k}$, as $k\to\infty$, 
and using~\eqref{elleuno-Stunu}, \eqref{gradStnu-conv}, we obtain
\begin{equation}
\label{semiconc-Stu}
S_t u(x+h) - S_t u(x) - \langle p, h\rangle \leq \frac{\kappa_t}{2}\cdot  |h|^2
\end{equation}
for all $h\in\R^N$ such that $[x,x+h]\subset \Omega$. 
Recalling Definition~\ref{general-grad} and by Theorem~\ref{Pro-Semi}-(iv), 
this inequality
implies that $p\in D^+ S_t u(x) = \{\nabla S_t u(x)\}$.
Since $p$ is an arbitrary accumulation point of $\{\nabla S_t u^\nu(x)\}_\nu$,
it follows that 
\begin{equation}
\nabla S_t u^\nu (x)
\ \ \underset{\nu\to\infty}\longrightarrow \ \ 
\nabla S_t u (x)
\qquad\forall~x\in\Omega'\,,
\end{equation}
which proves~\eqref{grad-Stunu-conv} and hence~\eqref{conv-semigr-nabla-L1}.
In turn, \eqref{conv-semigr-nabla-L1} together with~\eqref{elleuno-Stunu}
yields~\eqref{conv-semigr-2}, completing the proof of the proposition.
\end{proof}

\subsection{Conclusion of the proof of Theorem~\ref{upper-lower-estimate}-${\bf (i)}$}

Given $R,M,T>0$, consider  the set of initial data $\mathcal{L}_{[R,M]}$ introduced in~\eqref{CLM-1}.
Then, invoking Corollary~\ref{Supp}-(ii) we know that,
for every $u_0\in\mathcal{L}_{[R,M]}$,  there holds
\begin{equation}
\label{supp-est-22}
S_T u_0 = S_T 0 \quad \text{on} \quad \R^N\,\setminus \,]-l_T,\,  l_T[^N,
\end{equation}
with
\begin{equation}
\label{lt-def}
l_T= 2 \Bigg(
\frac{M\sqrt{N}\cdot R+(c_1+c_3)\,T}{1+c_1\,T }+T\, \bigg(\frac{2\, (1+c_1\,T )}{c_2}\bigg)^{\!\!\frac{1}{\alpha-1}}
\Bigg)+ 2\sqrt{N}\cdot R\,.
\end{equation}
On the other hand, relying on~\eqref{ellinf-bound-1}
and applying  Proposition~\ref{Properties of u}-(i) we find that $S_T u_0$
is a Lipschitz continuous map with Lipschitz constant
\begin{equation}
\label{muT-def}
\mu_T\doteq 
M+2\beta_{\scriptscriptstyle{1,T}}\,T\,,
\end{equation}
where
\begin{equation}
\label{beta1T-def}
\beta_{\scriptscriptstyle{1,T}}\doteq 
\sup\bigg\{
\max\big\{\big|D_x L(x,q)\big|,\big|D_q L(x,q)\big|\big\}\ \Big| \ |x|\leq \widetilde r_{\scriptscriptstyle{T}}, \ |q|\leq 
\widetilde q_{\scriptscriptstyle{1,T}}
\bigg\}\,,
\end{equation}
and 
\begin{equation}
\label{rTt-def}
\begin{aligned}
\widetilde r_{\scriptscriptstyle{T}}
&=
 l_{1}\big(M\sqrt{N}\cdot R,\,T\big)+\bigg(\frac{3\, \sqrt{N}}{2}+ 2\sqrt{N}\bigg)\cdot l_T
\\
\noalign{\smallskip}
&=
\frac{M\sqrt{N}\cdot R+ (c_1+c_3) \, T}{1+c_1\, T}+
T\, \bigg(\frac{2\, (1+c_1\,T )}{c_2}\bigg)^{\!\!\frac{1}{\alpha-1}}
+\frac{7\, \sqrt{N}\cdot l_T}{2}
\\
\noalign{\smallskip}
&=(1+7\sqrt{N})\bigg(\frac{M\sqrt{N}\cdot R+(c_1+c_3 )\, T}{1+c_1\, T}+
T\, \bigg(\frac{2\, (1+c_1\,T )}{c_2}\bigg)^{\!\!\frac{1}{\alpha-1}}\bigg)
+7\, N\cdot R\,,
\end{aligned}
\end{equation}
while
\begin{equation}
\label{qTt-def}
\begin{aligned}
\widetilde q_{\scriptscriptstyle{1,T}}
&\doteq 
 \Chi_1\big(M\sqrt{N}\cdot R,\,, M,\, l_T,\,T\big)
 +2\, \sqrt{N}\cdot l_T
\\
\noalign{\smallskip}
&=\max\Big\{b_1, (1+b_2)^{\frac{1}{\alpha-1}}
\Big\} +4\sqrt{N}\bigg(\frac{M\sqrt{N}\cdot R+(c_1+c_3) T}{1+c_1\, T}+
T\, \bigg(\frac{2\, (1+c_1\,T )}{c_2}\bigg)^{\!\!\frac{1}{\alpha-1}}\bigg)
\!+\!4\, N\!\cdot\! R,
\end{aligned}
\end{equation}
with
\begin{equation}
\label{b12-def}
\begin{aligned}
b_1
&= \frac{c_1}{c_2}\Big(1+\frac{3\sqrt{N}\, l_T}{2}\Big)+\frac{c_1+c_3}{c_2}
+\frac{c_1}{c_2} \cdot l_{1}\Big(M\sqrt{N}\cdot R,\,T\Big)
\\
\noalign{\smallskip}
&=\frac{c_1}{c_2}\Bigg(1+\big(1+3\sqrt{N}\big)\bigg(\frac{M\sqrt{N}\cdot R+(c_1+c_3) T}{1+c_1\, T}+
T\, \bigg(\frac{2\, (1+c_1\,T )}{c_2}\bigg)^{\!\!\frac{1}{\alpha-1}}\bigg)
\!+\!3N\!\cdot\! R\Bigg)\!+\!\frac{c_1+c_3}{c_2},
\\
\\
b_2&= 
\frac{1}{c_2^2} \Big(
\big(c_1 c_4 +c_3 c_4+c_2\, c_5\big) T + c_1  c_4\cdot l_T \cdot T+
\big(c_2+2\,c_4\sqrt{N}\cdot R\big) M\Big)
\\
\noalign{\smallskip}
&=
\frac{1}{c_2^2}
\Bigg[
\big(c_1 c_4 +c_3 c_4+c_2\, c_5\big) T 
+ \big(c_2+2\,c_4\sqrt{N}\cdot R\big) M \Bigg]+
\\
\noalign{\smallskip}
&\quad
+\frac{2 c_1 c_4}{c_2^2}
\Bigg[
\Bigg(
\frac{M\sqrt{N}\cdot R+(c_1+c_3)\,T}{1+c_1\,T }+T\, \bigg(\frac{2\, (1+c_1\,T )}{c_2}\bigg)^{\!\!\frac{1}{\alpha-1}}
\Bigg) T+ \sqrt{N}\cdot R\cdot T
\Bigg].
\end{aligned}
\end{equation}
Moreover, by Proposition~\ref{Properties of u}-(ii), using again~\eqref{ellinf-bound-1} we 
deduce  that $S_T u_0$ is a semiconcave map with semiconcavity constant
\begin{equation}
\label{kappaT-def}
\kappa_{\scriptscriptstyle{T}}\doteq 
\beta_{\scriptscriptstyle{2,T}}\bigg(1+\frac{3+T^2}{3T}\bigg),
\end{equation}
where
\begin{equation}
\label{beta2T-def}
\begin{aligned}
\beta_{\scriptscriptstyle{2,T}}&\doteq 
\sup\bigg\{
\max\big\{\big|D^2_{xx} L(x,q)\big|,\big|D^2_{qx} L(x,q)\big|,\big|D^2_{qq} L(x,q)\big|\big\}\ \Big| \ |x|\leq \widetilde r_{\scriptscriptstyle{T}}, \ \ |q|\leq \widetilde {q}_{\scriptscriptstyle{2,T}}
\bigg\}\,,
\end{aligned}
\end{equation}
with $\widetilde r_{\scriptscriptstyle{T}}$ defined in~\eqref{rTt-def} and
\begin{equation}
\label{qT2t-def}
\begin{aligned}
\widetilde q_{\scriptscriptstyle{2,T}}
&\doteq 
 \Chi_1\big(M\sqrt{N}\cdot R,\,, M,\, l_T,\,T\big)
 +2\, \sqrt{N}\cdot \frac{l_T}{T}
\\
\noalign{\smallskip}
&=\max\Big\{b_1, (1+b_2)^{\frac{1}{\alpha-1}}
\Big\} +\frac{4\sqrt{N}}{T}\bigg(\frac{M\sqrt{N}\cdot R+(c_1+c_3)\,T}{1+c_1\, T}+
T\, \bigg(\frac{2\, (1+c_1\,T )}{c_2}\bigg)^{\!\!\frac{1}{\alpha-1}}\bigg)
\!+\!\frac{4\, N\!\cdot\! R}{T},
\end{aligned}
\end{equation}
$b_1, b_2$ being the constants in~\eqref{b12-def}.
Therefore, recalling definition~\eqref{LSRM} we find that 
\begin{equation}
\label{inclusion1}
S_T u_0\!\restriction_{[-l_T,l_T]^N}\in \mathcal{SC}^{^{S_T 0}}_{[l_T,\,\mu_T,\kappa_T]}
\qquad\forall u_0\in\mathcal{L}_{[R,M]}\,,
\end{equation}
where $S_T u_0\!\restriction_{[-l_T,l_T]^N}$ denotes the restriction of $S_T u_0$
to the $N$-dimensional cube $[-l_T,l_T]^N$.
On the other hand, by~\eqref{supp-est-22} one has
\begin{equation}
\label{supp-est-23}
\text{supp}(\varphi)\subset [-l_T, l_T]^N\qquad\quad \forall~\varphi\in 
S_T(\mathcal{L}_{[R,M]})-S_T \,0\,.
\end{equation}
Moreover, by Proposition~\ref{Properties of u}-(ii) also $S_T \,0$ is a semiconcave map with semiconcavity 
constant $\kappa_T$.
Hence, applying Theorem~\ref{estSC}-(i), we deduce that, for $\varepsilon$ sufficiently small, there holds
\begin{equation}
\label{upper-entr-est-final}
\begin{aligned}
\mathcal{H}_{\epsilon}\Big(S_T(\mathcal{L}_{[R,M]})-S_T \,0\ \big|\ \mathbf{W}^{1,1}(\mathbb{R}^N)\Big)
&=
\mathcal{H}_{\epsilon}\Big(S_T(\mathcal{L}_{[R,M]})-S_T \,0\ \big|\ \mathbf{W}^{1,1}([-l_T, l_T]^N)\Big)
\\
\noalign{\smallskip}
&=
\mathcal{H}_{\epsilon}\Big(S_T(\mathcal{L}_{[R,M]}) \big|\ \mathbf{W}^{1,1}([-l_T, l_T]^N)\Big)
\\
\noalign{\smallskip}
&\leqslant
\mathcal{H}_{\epsilon}\Big(\mathcal{SC}^{^{S_T 0}}_{[l_T,\,\mu_T,\kappa_T]}\big|\ \mathbf{W}^{1,1}([-l_T, l_T]^N)\Big)
\\
\noalign{\smallskip}
&\leqslant \gamma^+_{_{[l_T,\mu_T,\kappa_T,N]}}\cdot\frac{1}{\varepsilon^N}\,,
\end{aligned}
\end{equation}
where
\begin{equation}
\gamma^+_{_{[l_T,\mu_T,\kappa_T,N]}}=
 \omega_N^N \cdot \Big(4N\cdot\big(1+ \mu_T+(\kappa_{\scriptscriptstyle{T}}+1)l_T\big)\Big)^{\!4N^2},
\end{equation}
with the constants $l_T,\,\mu_T,\kappa_T$  defined in~\eqref{lt-def}, \eqref{muT-def}, \eqref{kappaT-def}. 
This completes the proof of the upper bound~\eqref{Upper-est-H}.
\qed

\section{Lower compactness estimates}
\label{sec:low-est}

\subsection{Controllability of a class of semiconcave functions}

The proof of Theorem~\ref{upper-lower-estimate}-(ii) is based on a local controllability result 
for the class of semiconcave functions introduced in~\eqref{LSRM}.
Towards this goal,  we will first 
show
that a solution of~\eqref{HJ} with a semiconvex initial
condition preserves  the semiconvexity for a  time interval
that depends on the semiconvexity constant of the initial condition.
We shall obtain this property exploiting the representation
of a solution to~\eqref{HJ} as as the value function of the Bolza problem in the calculus of variations 
with running cost $L$ and initial cost~$u_0$:
\begin{equation*}
\tag*{$\text{(CV)}_{t,x}$}
\min_{\xi\in AC([0,t],\mathbb{R}^N)}\bigg\lbrace{u_{0}(\xi(0))+\int_{0}^{t}L(\xi(s),\dot{\xi}(s))ds\ \Big|\ \xi(t)=x\ \bigg\rbrace}.
\end{equation*}
\begin{proposition}
\label{le:semiconvexity}
Let  $H:\R^N\to \R$ be a function satisfying  the assumptions~{\bf (H1)-(H2)}
and \linebreak  $\{S_t : \Lip_{loc} (\mathbb{R}^N) \to \Lip_{loc} (\mathbb{R}^N)\}_{t\geqslant 0}$ be the semigroup of viscosity solutions 
generated by~\eqref{HJ}. 
Then, given any $c, l>0$ there exists a constant $\tau_4(c, l)>0$ depending on $c, l$,
such that the following holds. 
Given any $T<\tau_4(c, l)$, there exists $K_T>0$ such that, 
for every  semiconvex map
$u_0\in \Lip_{loc}(\R^N)\cap \mathbf{L}^\infty(\mathbb{R}^N)$ 
 with semiconvexity constant $K\leq K_T$
 and with $\|u_0\|_{\strut\mathbf{L}^\infty}\leq c$,
the following hold true.
\begin{itemize}
\item[$(i)$] $x\mapsto S_t u_0(x)$ is semiconvex on $[-l, l]^N$ for all $t\in [0,T]$.
\item[$(ii)$] $(t,x)\mapsto S_t u_0(x)$ is a $C^1$ classical solution of~\eqref{HJ} on~$]0,T]\times [-l,l]^N$.
\end{itemize}
\end{proposition}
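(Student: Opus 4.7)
The plan is to establish semiconvexity of $S_T u_0$ by a variational competitor argument dual to the one used for semiconcavity in Proposition~\ref{Properties of u}. Fix $x\in[-l,l]^N$ and $h\in\R^N$, and let $\xi^\pm\in AC([0,T],\R^N)$ be minimizers for $(CV)_{T,x\pm h}$; the midpoint trajectory $\xi=\frac12(\xi^++\xi^-)$ satisfies $\xi(T)=x$ and is therefore admissible for $(CV)_{T,x}$. Set $z=\frac12(\xi^+-\xi^-)$. The semiconvexity of $u_0$ with constant $K$ yields $u_0(\xi^+(0))+u_0(\xi^-(0))-2u_0(\xi(0))\geq -K\,|z(0)|^2$, and a second-order integral Taylor expansion of $L$ about $(\xi(s),\dot\xi(s))$, combined with the strict positivity of $D^2_{qq}L$ — used via completion of the square to absorb the mixed term $\langle D^2_{xq}L\,z,\dot z\rangle$ against $|\dot z|^2$ — gives
\[
L(\xi^+,\dot\xi^+)+L(\xi^-,\dot\xi^-)-2L(\xi,\dot\xi)\geq -C_L\,|z(s)|^2\quad\text{a.e.\ }s\in[0,T],
\]
where $C_L$ depends only on $\mathbf{L}^\infty$ bounds on $D^2 L$ over the compact set of trajectories provided by Lemma~\ref{L1-bound-xi}. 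Crucially, the $|\dot z|^2$ contribution is absorbed with a favorable sign, so only spatial separation needs control. Together this gives the key inequality
\[
S_T u_0(x+h)+S_T u_0(x-h)-2\,S_T u_0(x)\geq -K\,|z(0)|^2 - C_L\int_0^T|z(s)|^2\,ds.
\]

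The technical crux is then the \emph{endpoint-Lipschitz estimate} $\sup_{s\in[0,T]}|z(s)|\leq C\,|h|$, with $C$ uniform over the admissible class. To obtain it I would extend each minimizer via Theorem~\ref{prop-viscsol}(iii) to a characteristic $(\xi^\pm,p^\pm)$ solving the Hamiltonian system \eqref{hamilt-syst} on $[0,T]$ with terminal data $\xi^\pm(T)=x\pm h$ and initial data $p^\pm(0)\in\partial u_0(\xi^\pm(0))=D^-u_0(\xi^\pm(0))$, the last identity being valid for semiconvex $u_0$ by Remark~\ref{clarke-grad}. Semiconvexity of $u_0$ then supplies the one-sided monotonicity
\[
\big\langle p^+(0)-p^-(0),\,\xi^+(0)-\xi^-(0)\big\rangle\geq -K\,|\xi^+(0)-\xi^-(0)|^2.
\]
I would close the estimate by mollification: replace $u_0$ by smooth approximants $u_0^\varepsilon$ that remain semiconvex with constants $K_\varepsilon\to K$, for which the characteristic BVP admits a unique smooth solution for small $T$; a Gronwall argument applied to the differences $\Delta\xi=\xi^+_\varepsilon-\xi^-_\varepsilon$ and $\Delta p=p^+_\varepsilon-p^-_\varepsilon$ along the linearized Hamiltonian system — whose coefficients are controlled on the compact domain of Lemma~\ref{L1-bound-xi} — produces an inequality of the form $\sup_{[0,T]}|\Delta\xi|+T\sup_{[0,T]}|\Delta p|\leq C'\big(|h|+KT\sup_{[0,T]}|\Delta\xi|\big)$, which for $T<\tau_4(c,l)$ below the first-conjugate-point time of the Hamiltonian flow and $K\leq K_T\sim 1/T$ small enough for the transversality term to be absorbable becomes $\sup_{[0,T]}|\Delta\xi|\leq C|h|$. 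Passage to the limit $\varepsilon\to 0$ is then carried out via Proposition~\ref{St-W11-continuity} and the lower semicontinuity of the semiconvexity constant under uniform convergence.

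Inserting $|z(s)|\leq C|h|$ into the key inequality gives $S_T u_0(x+h)+S_T u_0(x-h)-2S_T u_0(x)\geq -(K+C_LT)\,C^2\,|h|^2$, proving assertion (i) with an explicit semiconvexity constant. For assertion (ii), Proposition~\ref{Properties of u}(ii) already guarantees that $S_t u_0$ is semiconcave on $[-l,l]^N$ for every $t\in (0,T]$ (with constant uniform in $t\in[\delta,T]$ for $\delta>0$); combining this with (i) and Remark~\ref{semiconc-semiconc} yields $S_t u_0\in C^{1,1}_{\mathrm{loc}}$ on the open cube, and Remark~\ref{visos-sol-differentiab} then upgrades the viscosity solution to a $C^1$ classical solution of~\eqref{HJ} on $]0,T]\times\,]-l,l[^N$. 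The principal obstacle is precisely the endpoint-Lipschitz estimate: because $u_0$ is only semiconvex and not $C^{1,1}$, the forward Hamiltonian flow from $\{(y,p):p\in\partial u_0(y)\}$ need not be single-valued nor Lipschitz in $y$, and the joint smallness of $T$ and $K$ is exactly what keeps the backward BVP for characteristics well-conditioned under this reduced regularity.
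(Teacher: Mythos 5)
Your overall scheme — midpoint competitor, second-order lower bound on the running-cost contribution, and then an estimate on the separation of the two minimizing characteristics driven by the subdifferential monotonicity of the semiconvex~$u_0$ — is the same skeleton used in the paper's proof. The pointwise second-difference bound for~$L$ that you derive by completing the square against $D^2_{qq}L>0$ is a legitimate variant of the paper's estimate~\eqref{est-L-9} (the paper keeps the cross term and ends up with an integral of $|\xi^+-\xi^-|^2+|p^+-p^-|^2$ rather than of $|z|^2$ alone, but either form can be fed into the rest of the argument).

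However, the step you single out as the ``technical crux'' — the endpoint-Lipschitz estimate $\sup_s|z(s)|\leq C|h|$ via a Gronwall argument on the linearized characteristic BVP — contains a genuine gap, and it is precisely the place where the paper does something structurally different. Your claimed inequality
$\sup_{[0,T]}|\Delta\xi|+T\sup_{[0,T]}|\Delta p|\leq C'\big(|h|+K\,T\sup_{[0,T]}|\Delta\xi|\big)$
implicitly requires a two-sided bound $|\Delta p(0)|\leq K\,|\Delta\xi(0)|$ at the initial time, but semiconvexity with constant~$K$ only furnishes the one-sided monotonicity $\langle \Delta p(0),\Delta\xi(0)\rangle\geq -K|\Delta\xi(0)|^2$. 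After mollification the Lipschitz modulus of $Du_0^\varepsilon$ (equivalently the upper bound on $D^2 u_0^\varepsilon$) blows up as $\varepsilon\to 0$, so the constant in your Gronwall estimate is not uniform and does not produce $\sup|z|\leq C|h|$ in the limit. The paper circumvents this entirely by never trying to obtain a pointwise bound on $|z(s)|$ or $|\Delta p(s)|$: it instead integrates
$\frac{d}{ds}\langle p^+(s)-p^-(s),\ \xi^+(s)-\xi^-(s)\rangle$
over $[0,t]$, exploits the symplectic cancellation of the mixed terms (so that only $-\langle\widetilde H_{xx}\Delta\xi,\Delta\xi\rangle+\langle\widetilde H_{pp}\Delta p,\Delta p\rangle$ survives), uses the \emph{uniform convexity} of $H$ in $p$ (the constant $\beta_{5,T}$ in~\eqref{beta5-def}) to gain a coercive term $\beta_{5,T}\int_0^t|\Delta p|^2\,ds$, and controls the two boundary terms one-sidedly — the one at $s=t$ by the already-established semiconcavity of $S_t u_0$ (Proposition~\ref{prop2} applied to $D^+S_tu_0$), the one at $s=0$ by the semiconvexity of $u_0$. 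Coupled with a Gronwall bound relating $\int|\Delta\xi|^2$ to $|h|^2$ and $\int|\Delta p|^2$, this closes an integrated estimate with no reference to $\|D^2u_0\|$, and the smallness of $T$ (and then of $K$) is used only to make the absorption constant $\tfrac{8\beta_{3}}{3\beta_{5}}\big(e^{\frac{3}{2}\beta_{3}t}-1\big)\big((1+\tfrac{K\beta_3}{2\beta_5})K+1\big)$ drop below $\tfrac12$ as in~\eqref{xipm-est7}. You should replace your pointwise Gronwall step by this integrated duality estimate; once that is done the mollification detour and the appeal to Proposition~\ref{St-W11-continuity} become unnecessary, since Theorem~\ref{prop-viscsol}(iii) already gives $p^\pm(0)\in\partial u_0(\xi^\pm(0))$ directly for the nonsmooth $u_0$. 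Your treatment of assertion~(ii) via Remarks~\ref{semiconc-semiconc} and~\ref{visos-sol-differentiab} is correct and matches the paper.
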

\begin{proof}
Recall that, as observed in Section~\ref{subsec:HJ}, the Legendre transform $L$ of $H$ 
satisfies the
assumptions \textbf{(L1)-(L2)}. 
\\
{\bf 1.} 
Let $u_0\in \Lip_{loc}(\R^N)\cap \mathbf{L}^\infty(\mathbb{R}^N)$ 
be such that $\|u_0\|_{\strut\mathbf{L}^\infty}\leq c$.
Fix $T>0$, 
and observe that
letting
$\xi^*$ be any minimizer for~$(CV)_{t,x}$, $t\leq T$, $x\in[-l,l]^N$, 
thanks to Lemma~\ref{L1-bound-xi}
one has 
\begin{equation}
\label{xistar-bounds2}
\big|\xi^*(s)\big|\leq r_{T},\qquad \quad
|\dot{\xi^*}(s)|\leq \Chi_{T}
\qquad\quad\forall~s\in[0,t]\,,
\end{equation}
for some constants $r_{T}=r_{T}(c,l)$,
$\Chi_{T}=\Chi_{T}\big(c,\,
\Lip[u_0; [-r_t,r_t]^N], \, l\big)$ 
depending on $c$, \linebreak
$\Lip\big[u_0; [-r_{T},r_{T}]^N\big], l$ and $T$.
Then, with the same arguments of the proof of Proposition~\ref{Properties of u} we deduce that
there exist constants $\mu_T, C_T>0$ 
depending on  $c$,
$\Lip\big[u_0; [-r_{T},r_{T}]^N\big]$, $l$ and $T$, so that:
\begin{itemize}
\item[(a)] $S_t u_0$ is Lipschitz continuous  on 
%
\begin{equation}
\label{XI-def}
\Xi_t\doteq \bigg\{\xi^*(t)\ \big|\ \xi^* \ \text{is a minimizer for} \ (CV)_{T,x},\ x\in[-l, l] \bigg\},
\end{equation}
for all $t\in [0, T]$,
with Lipschitz constant $\mu_T$\,;
\item[(b)] $S_t u_0$ is semiconcave on $\Xi_t$
for all $t\in\,]0, T]$,
with semiconcavity constant $C_T$\,.
\end{itemize}
%
Hence, in oder to prove the proposition, we only have to show that
there exists $\tau_4>0$ so that, for any $T<\tau_4$
there holds
\begin{equation}\label{Uni2}
S_t u_0(x+h)+S_t u_0(x-h)-2 S_t u_0(x)\geqslant k_T\cdot |h|^2
\qquad\forall~t\leq T\,,
\end{equation}
for all $x,x-h,x+h\in [-l,l]^N$, and for some constant $k_T>0$. 
In fact, it follows from~\eqref{Uni2} that property~$(i)$ is verified
for all $T<\tau_4$.
On the other hand, once we know that $S_t u_0$ is both semiconcave and semiconvex on~$[-l, l]^N$,
for $t\in\,]0, T]$,
invoking Remark~\ref{visos-sol-differentiab} we immediately deduce that also the property $(ii)$ holds.
\quad
\\
\\
{\bf 2.} 
Towards a proof of~\eqref{Uni2},
given any $x,x-h,x+h\in [-l,l]^N$,  let $\xi^{\pm}$ be a minimizer
for~$(CV)_{t,x\pm h}$, $t\leq T$, and consider the map  $\xi_x \in AC([0,t],\mathbb{R}^N)$
defined by
\begin{equation}
\label{xix-def}
\xi_x (s)=\frac{\xi^+(s)+\xi^-(s)}{2}\qquad\forall~s\in [0,t].
\end{equation}
Then, one has
\begin{equation}
\label{xipm-t}
\xi^\pm(t)=x\pm h,\qquad\quad \xi_x(t)=x\,,
\end{equation}
and
\begin{equation}
\label{S_tu0-est1}
\begin{aligned}
S_t u_0(x\pm h)&=u_0(\xi^\pm(0))+\int_0^t
L\big(\xi^\pm(s),\dot{\xi^\pm}(s)\big)ds\,,
\\
\noalign{\medskip}
S_t u_0(x)&\leq u_0(\xi_x(0))+\int_0^t
L\big(\xi_x(s),\dot{\xi_x}(s)\big)ds\,.
\end{aligned}
\end{equation}
Moreover, recalling Theorem~\ref{prop-viscsol}-(iii), 
and because of property (a) above, there exist dual arcs $p^\pm \in AC([0,t],\mathbb{R}^N)$
so that $(\xi^\pm, p^\pm)$ provides the solution of
\begin{equation}
\label{hamilt-syst-2}
\begin{cases}
\hspace{.cm}
\dot \xi = D_p H(\xi, p),&
\\
\hspace{.cm}
\dot p = - D_x H(\xi, p),&
\end{cases}
\end{equation}
on $[0,t]$, with terminal condition
\begin{equation}
\label{final-data-2}
\begin{cases}
\hspace{.cm}
\xi^\pm (t) =x\pm h,&
\\
\hspace{.cm}
p (t)\in D^+ S_t u_0(x\pm h),&
\end{cases}
\end{equation}
that satisfy
\begin{equation}
\label{p-cond-2}
\big|p^\pm(s)\big|
=\big|DS_s u_0(\xi^\pm(s))\big|\leq \mu_T\qquad\forall~s\in\,]0,t[
\end{equation}
and
\begin{equation}
\label{p-cond0b}
p^\pm(0)\in \partial u_0(\xi^\pm(0))\,.
\end{equation}
Then, relying on~\eqref{xistar-bounds2}, \eqref{p-cond-2}, and setting
\begin{equation}
\label{beta3-def}
\beta_{\scriptscriptstyle{3,T}}
\doteq
\sup\bigg\{
\max\big\{\big|D^2_{px} H(x,p)\big|,\big|D^2_{pp} H(x,p)\big|\big\}\ \Big| \ |x|\leq r_T, \
 |p|\leq \mu_T
\bigg\}\,,
\end{equation}
we find
\begin{equation}
\label{xipmdot-est1}
\begin{aligned}
\big|\dot{\xi^+}(s)-\dot{\xi^-}(s)\big|&=\Big|D_p H(\xi^+(s),p^+(s)) - D_p H(\xi^-(s),p^-(s))\Big|
\\
\noalign{\smallskip}
&\leq \beta_{\scriptscriptstyle{3,T}}\Big(\big|\xi^+(s)-\xi^-(s)\big|+\big|p^+(s)-p^-(s)\big|\Big)
\qquad\forall~s\in\,]0,t[\,.
\end{aligned}
\end{equation}
Next, notice that, since $u_0$ is semiconvex with constant $K$, we have
\begin{equation}
\label{u0-semiconvex}
u_0(x+h)+u_0(x-h)-2u_0(x)\geqslant -K\cdot |h|^2
\qquad\forall~x,h\in\R^N\,.
\end{equation}
On the other hand, thanks to the convexity of $L(x,q)$
with respect to~$q$,
it follows
\begin{equation}
\label{L-convex-est}
\int_0^t L(\xi_x(s),\dot{\xi^+}(s))ds+ \int_0^t L(\xi_x(s),\dot{\xi^-}(s))ds-2 \int_0^t L(\xi_x(s),\dot{\xi_x}(s))ds\geq 0\,.
\end{equation}
Hence, setting
\begin{equation}
\label{beta4-def}
\beta_{\scriptscriptstyle{4,T}}
\doteq
\sup\bigg\{
\max\big\{\big|D^2_{xx} L(x,q)\big|,\big|D^2_{xq} L(x,q)\big|\big\}\ \Big| \ |x|\leq r_T, \
 |q|\leq \Chi_T
\bigg\}\,,
\end{equation}
and relying on~\eqref{xistar-bounds2}, \eqref{xipmdot-est1}, \eqref{L-convex-est},  we derive
\begin{equation}
\label{est-L-9}
\begin{aligned}
& \int_0^t L(\xi^+(s),\dot{\xi^+}(s))ds+ \int_0^t L(\xi^-(s),\dot{\xi^-}(s))ds-2 \int_0^t L(\xi_x(s),\dot{\xi_x}(s))ds
\\
\noalign{\medskip}
&\quad=
\int_0^t L(\xi_x(s),\dot{\xi^+}(s))ds+ \int_0^t L(\xi_x(s),\dot{\xi^-}(s))ds-2 \int_0^t L(\xi_x(s),\dot{\xi_x}(s))ds+
\\
&\qquad+\int_0^t\Big(L(\xi^+(s),\dot{\xi^+}(s))-L(\xi_x(s),\dot{\xi^+}(s))\Big)ds+
\int_0^t\Big( L(\xi^-(s),\dot{\xi^-}(s))-L(\xi_x(s),\dot{\xi^-}(s))\Big)ds
\\
\noalign{\medskip}
&\quad\geq -\beta_{\scriptscriptstyle{4,T}}
\int_0^t \bigg(\frac{\big|\xi^+(s)-\xi^-(s)\big|^2}{2}+\big|\xi^+(s)-\xi^-(s)\big|\big|\dot{\xi^+}(s)-\dot{\xi^-}(s)\big|\bigg)ds
\\
\noalign{\medskip}
&\quad\geq -\frac{\beta_{\scriptscriptstyle{4,T}}(1+3\beta_{\scriptscriptstyle{3,T}})}{2}
\int_0^t \bigg(\big|\xi^+(s)-\xi^-(s)\big|^2+
\big|p^+(s)-p^-(s)\big|^2\bigg)ds.
\end{aligned}
\end{equation}
Therefore, \eqref{S_tu0-est1}, \eqref{u0-semiconvex}, \eqref{est-L-9}, together yield
\begin{equation}
\label{S_tu0-est2}
\begin{aligned}
&S_t u_0(x+h)+S_t u_0(x-h)-2 S_t u_0(x)\geq 
\\
\noalign{\medskip}
&\hspace{0.5in}\geq u_0(x+h)+u_0(x-h)-2u_0(x)+
\\
&\hspace{1in} + \int_0^t L(\xi^+(s),\dot{\xi^+}(s))ds+ \int_0^t L(\xi^-(s),\dot{\xi^-}(s))ds-2 \int_0^t L(\xi_x(s),\dot{\xi_x}(s))ds
\\
\noalign{\medskip}
&\hspace{0.5in}\geq -\frac{\beta_{\scriptscriptstyle{4,T}}(1+3\beta_{\scriptscriptstyle{3,T}})}{2}
\int_0^t \bigg(\big|\xi^+(s)-\xi^-(s)\big|^2+
\big|p^+(s)-p^-(s)\big|^2\bigg)ds
\qquad\quad\forall~t\leq T.
\end{aligned}
\end{equation}
\quad
\\
\\
{\bf 3.} In order to recover the estimate~\eqref{Uni2} from~\eqref{S_tu0-est2}
we need to provide an upper bound on \, $\int_0^t |\xi^+(s)-\xi^-(s)|^2ds$ \, and \,
$\int_0^t |p^+(s)-p^-(s)|^2ds$. To this end observe first that, by the
same computations at point 2., because of~\eqref{xipm-t},~\eqref{xipmdot-est1},
for $0\leq t'<t$ we find
\begin{equation}
\label{xipm-est1}
\begin{aligned}
\big|\xi^+(t')-\xi^-(t')\big|^2&=\big|\xi^+(t)-\xi^-(t)\big|^2-\frac{1}{2} \int_{t'}^t
\frac{d}{ds} \big|\xi^+(s)-\xi^-(s)\big|^ 2ds
\\
\noalign{\medskip}
&=4|h|^2-\int_{t'}^t\bigg\langle
D_p H(\xi^+(s),p^+(s)) - D_p H(\xi^-(s),p^-(s)),
\xi^+(s)-\xi^-(s) \bigg\rangle ds
\\
\noalign{\medskip}
&\leq4|h|^2+\int_{t'}^t
\Big|D_p H(\xi^+(s),p^+(s)) - D_p H(\xi^-(s),p^-(s))\Big|
\cdot \big|\xi^+(s)-\xi^-(s)\big|ds
\\
\noalign{\medskip}
&\leq4|h|^2+\int_{t'}^t
\beta_{\scriptscriptstyle{3,T}}\Big(\big|\xi^+(s)-\xi^-(s)\big|+\big|p^+(s)-p^-(s)\big|\Big)
\cdot \big|\xi^+(s)-\xi^-(s)\big|ds
\\
\noalign{\medskip}
&\leq4|h|^2+
\frac{\beta_{\scriptscriptstyle{3,T}}}{2}  \int_0^t\big|p^+(s)-p^-(s)\big|^2ds
+\frac{3\beta_{\scriptscriptstyle{3,T}}}{2} 
\int_{t'}^t \big|\xi^+(s)-\xi^-(s)\big|^2ds.
\end{aligned}
\end{equation}
By a Gronwall type inequality, \eqref{xipm-est1} implies
\begin{equation}
\label{xipm-est2}
\int_0^t\big|\xi^+(s)-\xi^-(s)\big|^2ds\leq
\frac{8}{3\beta_{\scriptscriptstyle{3,T}}}
\Big(e^{\frac{3\beta_{\scriptscriptstyle{3,T}}}{2}\,t}-1\Big)
|h|^2+\frac{1}{3}
\Big(e^{\frac{3\beta_{\scriptscriptstyle{3,T}}}{2}\,t}-1\Big)  \int_0^t\big|p^+(s)-p^-(s)\big|^2ds.
\end{equation}
Towards an estimate of the second term in~\eqref{xipm-est2}, 
observe that, by~\eqref{unif-semiconc-L}, 
there holds
\begin{equation}
\label{unif-convex-H}
\qquad\qquad
\big\langle q, \, D^2_{pp} H(x,p)\,q\big\rangle\geq 
\beta_{\scriptscriptstyle{5,T}} |q|^2
\qquad \forall~
x\in B(0, r_T), \
 p\in B(0, \mu_T), \ q\in\R^N,
\end{equation}
with
\begin{equation}
\label{beta5-def}
\beta_{\scriptscriptstyle{5,T}}
\doteq
\inf\bigg\{
\big\langle q, \, D^2_{pp} H(x,p)\,q\big\rangle \ \Big| \ |x|\leq r_T, \
 |p|\leq \mu_T, \ |q|=1
\bigg\}>0.
\end{equation}
Hence, recalling that
$(\xi^\pm, p^\pm)$ are solutions of~\eqref{hamilt-syst-2},
defining the averaged matrices
\begin{equation}
\begin{aligned}
\widetilde H_{px}(s)&\doteq \widetilde H_{xp}(s)\doteq
\int_0^1D^2 H_{px} \big(
\lambda\xi^+(s)+(1-\lambda)\xi^-(s), \ \lambda p^+(s) +(1-\lambda)p^-(s)
\big)d\lambda,
\\
\noalign{\smallskip}
\widetilde H_{pp}(s)&\doteq
\int_0^1D^2 H_{pp} \big(
\lambda\xi^+(s)+(1-\lambda)\xi^-(s), \ \lambda p^+(s) +(1-\lambda)p^-(s)
\big)d\lambda,
\\
\noalign{\smallskip}
\widetilde H_{xx}(s)&\doteq 
\int_0^1D^2 H_{xx} \big(
\lambda\xi^+(s)+(1-\lambda)\xi^-(s), \ \lambda p^+(s) +(1-\lambda)p^-(s)
\big)d\lambda,
\end{aligned}
\end{equation}
and relying on~\eqref{xistar-bounds2}, \eqref{p-cond-2}, \eqref{unif-convex-H},
we get
\begin{equation}
\label{ppm-est1}
\begin{aligned}
&\frac{d}{ds}
\Big\langle p^+(s)-p^-(s),\, \xi^+(s)-\xi^-(s)\Big\rangle 
\\
\noalign{\smallskip}
&\qquad =
- \Big\langle D_x H(\xi^+(s),p^+(s)) - D_x H(\xi^-(s),p^-(s)),\, \xi^+(s)-\xi^-(s)\Big\rangle+
\\
&\qquad \qquad +\Big\langle p^+(s)-p^-(s),\, D_p H(\xi^+(s),p^+(s)) - D_p H(\xi^-(s),p^-(s))\Big\rangle
\\
\noalign{\medskip}
&\qquad = -
\Big\langle \widetilde H_{xx}(s) \big(\xi^+(s)-\xi^-(s)\big)+ \widetilde H_{xp}(s)
\big(p^+(s)-p^-(s)\big),\, \xi^+(s)-\xi^-(s) \Big\rangle+
\\
&\qquad \qquad +\Big\langle p^+(s)-p^-(s),\, \widetilde H_{px}(s) \big(\xi^+(s)-\xi^-(s)\big)+ \widetilde H_{pp}(s)
\big(p^+(s)-p^-(s)\big) \Big\rangle
\\
\noalign{\medskip}
&\qquad = -
\Big\langle \widetilde H_{xx}(s) \big(\xi^+(s)-\xi^-(s)\big),\, \xi^+(s)-\xi^-(s) \Big\rangle+
\Big\langle p^+(s)-p^-(s),\, \widetilde H_{pp}(s)
\big(p^+(s)-p^-(s)\big) \Big\rangle
\\
\noalign{\medskip}
&\qquad\geq
-\beta_{\scriptscriptstyle{3,T}}\big|\xi^+(s)-\xi^-(s) \big|^2 + \beta_{\scriptscriptstyle{5,T}}
\big|p^+(s)-p^-(s) \big|^2\,.
\end{aligned}
\end{equation}
Observe now that $u_0$ is semiconvex with constant $K$
and $S_tu_0$  is semiconcave on the domain $\Xi_t$
in~\eqref{XI-def}
with semiconcavity constant $C_T$.
Hence, invoking Proposition~\ref{prop2},
recalling Remark~\ref{clarke-grad}, 
and relying on~\eqref{sup-sub-diff-equal}, 
\eqref{xipm-t}, \eqref{final-data-2}, \eqref{p-cond0b}
we have
\begin{equation}
\label{ppm-est2}
\begin{aligned}
\Big\langle p^+(t)-p^-(t)&,\, \xi^+(t)-\xi^-(t)\Big\rangle
-\Big\langle p^+(0)-p^-(0),\, \xi^+(0)-\xi^-(0)\Big\rangle
\\
\noalign{\medskip}
&\leq C_T \big|\xi^+(t)-\xi^-(t)\big|^2 +K \big|\xi^+(0)-\xi^-(0)\big|^2
\\
\noalign{\medskip}
&\leq 4C_T |h|^2 + K \big|\xi^+(0)-\xi^-(0)\big|^2\,.
\end{aligned}
\end{equation}
On the other hand, by the same computations in~\eqref{xipm-est1}
we find
\begin{equation}
\label{xipm-est3}
\begin{aligned}
\big|\xi^+(0)-\xi^-(0)\big|^2&\leq4|h|^2+\int_0^t
\beta_{\scriptscriptstyle{3,T}}\Big(\big|\xi^+(s)-\xi^-(s)\big|+\big|p^+(s)-p^-(s)\big|\Big)
\cdot \big|\xi^+(s)-\xi^-(s)\big|ds
\\
\noalign{\medskip}
&\leq4|h|^2+
\frac{\epsilon\cdot \beta_{\scriptscriptstyle{3,T}}}{2}  \int_0^t\big|p^+(s)-p^-(s)\big|^2ds
+\bigg(1+\frac{1}{2\epsilon}\bigg)\beta_{\scriptscriptstyle{3,T}} 
\int_0^t \big|\xi^+(s)-\xi^-(s)\big|^2ds.
\end{aligned}
\end{equation}
Thus, \eqref{ppm-est2}, \eqref{xipm-est3}, together yield
\begin{equation}
\label{ppm-est3}
\begin{aligned}
\Big\langle p^+(t)-p^-(t)&,\, \xi^+(t)-\xi^-(t)\Big\rangle
-\Big\langle p^+(0)-p^-(0),\, \xi^+(0)-\xi^-(0)\Big\rangle
\\
\noalign{\medskip}
&\leq 4\big(C_T+K\big) |h|^2 + 
\frac{\beta_{\scriptscriptstyle{5,T}}}{2}  \int_0^t\big|p^+(s)-p^-(s)\big|^2ds+
\\
\noalign{\medskip}
&\quad + \bigg(1+\frac{K\,\beta_{\scriptscriptstyle{3,T}}}{2\beta_{\scriptscriptstyle{5,T}}}\bigg)K\, \beta_{\scriptscriptstyle{3,T}} 
\int_0^t \big|\xi^+(s)-\xi^-(s)\big|^2ds.
\end{aligned}
\end{equation}
Then, relying on~\eqref{ppm-est1}, \eqref{ppm-est3}, we derive
\begin{equation}
\label{ppm-est4}
\begin{aligned}
\beta_{\scriptscriptstyle{5,T}}
\int_0^t \big|p^+(s)-p^-(s) \big|^2 ds 
&\leq 
\beta_{\scriptscriptstyle{3,T}} \int_0^t\big|\xi^+(s)-\xi^-(s) \big|^2 ds
\\
&\quad +\int_0^t
\frac{d}{ds}
\Big\langle p^+(s)-p^-(s),\, \xi^+(s)-\xi^-(s)\Big\rangle ds
\\
\noalign{\medskip}
&\leq 4\big(C_T+K\big) |h|^2 + 
\frac{\beta_{\scriptscriptstyle{5,T}}}{2}  \int_0^t\big|p^+(s)-p^-(s)\big|^2ds+
\\
\noalign{\medskip}
&\quad +\bigg( \bigg(1+\frac{K\,\beta_{\scriptscriptstyle{3,T}}}{2\beta_{\scriptscriptstyle{5,T}}}\bigg)K+1\bigg)\, \beta_{\scriptscriptstyle{3,T}} 
\int_0^t \big|\xi^+(s)-\xi^-(s)\big|^2ds,
\end{aligned}
\end{equation}
which implies
\begin{equation}
\label{ppm-est5}
\int_0^t \big|p^+(s)-p^-(s) \big|^2 ds \leq 
\frac{8}{\beta_{\scriptscriptstyle{5,T}}} \big(C_T+K\big) |h|^2 + 
\Bigg( \bigg(1+\frac{K\,\beta_{\scriptscriptstyle{3,T}}}{2\beta_{\scriptscriptstyle{5,T}}}\bigg)K+1\Bigg)\frac{8\beta_{\scriptscriptstyle{3,T}} }{\beta_{\scriptscriptstyle{5,T}}}
\int_0^t \big|\xi^+(s)-\xi^-(s)\big|^2ds\,.
\end{equation}
\quad
\\
\\
{\bf 4.} 
Combining together~\eqref{xipm-est2} and \eqref{ppm-est5}, we find
\begin{equation}
\label{xipm-est6}
\begin{aligned}
&\int_0^t \bigg(\big|\xi^+(s)-\xi^-(s)\big|^2+
\big|p^+(s)-p^-(s)\big|^2\bigg)ds\leq
\\
\noalign{\medskip}
&\qquad \bigg(\frac{8}{3\beta_{\scriptscriptstyle{3,T}}}+
\frac{8}{3 \beta_{\scriptscriptstyle{5,T}}}\big(C_T+K\big)\bigg)
\Big(e^{\frac{3\beta_{\scriptscriptstyle{3,T}}}{2}\,t}-1\Big)|h|^2+
\\
\noalign{\medskip}
&\qquad+\frac{8\, \beta_{\scriptscriptstyle{3,T}}}{3\, \beta_{\scriptscriptstyle{5,T}}}
\Big(e^{\frac{3\beta_{\scriptscriptstyle{3,T}}}{2}\,t}-1\Big)
\Bigg( \bigg(1+\frac{K\,\beta_{\scriptscriptstyle{3,T}}}{2\beta_{\scriptscriptstyle{5,T}}}\bigg)K+1\Bigg)
\int_0^t \bigg(\big|\xi^+(s)-\xi^-(s)\big|^2+
\big|p^+(s)-p^-(s)\big|^2\bigg)ds.
\end{aligned}
\end{equation}
Observe now that, setting
\begin{equation}
\label{tau1-def}
\tau_4\doteq
\sup\Bigg\{
\inf\bigg\{\tau,\,
\frac{2}{3\,\beta_{\scriptscriptstyle{3,\tau}}}\ln\bigg(1+
\frac{3 \,\beta_{\scriptscriptstyle{5,\tau}}}{8\,\beta_{\scriptscriptstyle{3,\tau}}}\bigg)
\bigg\}\ \Big| \ \tau>0
\Bigg\},
\end{equation}
for any $T<\tau_4$ we can find $\mathcal{T}_{T}\geq T$  and $K_T>0$ such that,
for all $K\leq K_T$, one has
\begin{equation}
\label{t1-cond1}
T\leq 
\frac{2}{3\,\beta_{\scriptscriptstyle{3,\mathcal{T}_{_{T}}}}}\ln\bigg(1+
\frac{3 \,\beta_{\scriptscriptstyle{5,\mathcal{T}_{_{T}}}}^{\strut 2}}{4\,\beta_{\scriptscriptstyle{3,\mathcal{T}_{_{T}}}}\big(2\,\beta_{\scriptscriptstyle{5,\mathcal{T}_{_{T}}}}+2\,\beta_{\scriptscriptstyle{5,\mathcal{T}_{_{T}}}} K+ \beta_{\scriptscriptstyle{3,\mathcal{T}_{_{T}}}} K^2\big)}\bigg).
\end{equation}
Since
\begin{equation}
\label{t1-def1}
t\leq \frac{2}{3\,\beta_{\scriptscriptstyle{3,\mathcal{T}_{_{T}}}}}\ln\bigg(1+
\frac{3 \,\beta_{\scriptscriptstyle{5,\mathcal{T}_{_{T}}}}^{\strut 2}}{4\,\beta_{\scriptscriptstyle{3,\mathcal{T}_{_{T}}}}\big(2\,\beta_{\scriptscriptstyle{5,\mathcal{T}_{_{T}}}}+2\,\beta_{\scriptscriptstyle{5,\mathcal{T}_{_{T}}}} K+ \beta_{\scriptscriptstyle{3,\mathcal{T}_{_{T}}}} K^2\big)}\bigg)
\end{equation}
implies
\begin{equation}
\label{xipm-est7}
\frac{8\, \beta_{\scriptscriptstyle{3,\mathcal{T}_{_{T}}}}}{3\, \beta_{\scriptscriptstyle{5,\mathcal{T}_{_{T}}}}}
\Big(e^{\frac{3\beta_{\scriptscriptstyle{3,\mathcal{T}_{_{T}}}}}{2}\,t}-1\Big)
\Bigg( \bigg(1+\frac{K\,\beta_{\scriptscriptstyle{3,\mathcal{T}_{_{T}}}}}{2\beta_{\scriptscriptstyle{5,\mathcal{T}_{_{T}}}}}\bigg)K+1\Bigg)
\leq\frac{1}{2},
\end{equation}
we infer from~\eqref{xipm-est6}  that,  for any $t\leq T$, with $T$ satisfying~\eqref{t1-cond1}, 
and for $K\leq K_T$, there holds
\begin{equation}
\label{xipm-est8}
\int_0^t \bigg(\big|\xi^+(s)-\xi^-(s)\big|^2+
\big|p^+(s)-p^-(s)\big|^2\bigg)ds\leq
\frac{16\big(C_T+K\big)}{3}
\bigg(\frac{1}{\beta_{\scriptscriptstyle{3,\mathcal{T}_{_{T}}}}}+
\frac{1}{\beta_{\scriptscriptstyle{5,\mathcal{T}_{_{T}}}}}\bigg)
\Big(e^{\frac{3\beta_{\scriptscriptstyle{3,\mathcal{T}_{_{T}}}}}{2}\,T}-1\Big)|h|^2.
\end{equation}
Because of~\eqref{S_tu0-est2}, 
we deduce from~\eqref{xipm-est8} that,  for any $T<\tau_4$, with $\tau_4$ as in~\eqref{tau1-def},
and for $K\leq K_T$,
there holds
\begin{equation}
\label{S_tu0-est3}
\begin{aligned}
&S_t u_0(x+h)+S_t u_0(x-h)-2 S_t u_0(x)\geq 
\\
\noalign{\medskip}
&\hspace{0.5in}\geq -\frac{8\,\beta_{\scriptscriptstyle{4,\mathcal{T}_{_{T}}}}
(1+3\beta_{\scriptscriptstyle{3,\mathcal{T}_{_{T}}}})(C_T+K)}{3}
\bigg(\frac{1}{\beta_{\scriptscriptstyle{3,\mathcal{T}_{_{T}}}}}+
\frac{1}{\beta_{\scriptscriptstyle{5,\mathcal{T}_{_{T}}}}}\bigg)
\Big(e^{\frac{3\beta_{\scriptscriptstyle{3,\mathcal{T}_{_{T}}}}}{2}\,T}-1\Big)|h|^2
\qquad\quad\forall~t\leq T,
\end{aligned}
\end{equation}
which proves~\eqref{Uni2} and thus concludes the proof of the proposition.
\end{proof}
\begin{remark}
\label{semiconvexity}
\rm
By the proof  of Proposition~\ref{le:semiconvexity} it follows that,
under the same hypotheses of the proposition,  given any $c,l,K>0$ there
exists a constant $\tau_5(c,l,K)>0$
depending on $c, l$ and $K$,
such that the following holds. 
Given any $T\leq \tau_5(c,l,K)$, 
for every  semiconvex map
$u_0\in \Lip_{loc}(\R^N)\cap \mathbf{L}^\infty(\mathbb{R}^N)$ 
 with semiconvexity constant $K'\leq K$
 and with $\|u_0\|_{\strut\mathbf{L}^\infty}\leq c$,
the statements (i)-(ii) of Proposition~\ref{le:semiconvexity} hold.
\end{remark}
\medskip

\begin{proposition}\label{control-part}
In the same setting of Proposition~\ref{le:semiconvexity},
given any $R>0$, 
there exists $r_1(R)>0$ depending on $R$,
and, for every $r\leq r_1(R)$, $M>0$,
there exist $m_1(r,M)>0$, $k_1(r,m)>0$, $\tau_6(r,m)>0$ depending on $r \leq r_1(R)$, $m\leq m_1(r,M)$
and $M$, so that the following holds.
Letting $\mathcal L_{[R,M]}$, $\mathcal{SC}^{^{S_T 0,1}}_{[r,\,m,\,K]}$ be the sets defined 
in \eqref{CLM-1}, \eqref{LSRM-1}, for any $\psi \in \mathcal{SC}^{^{S_\tau 0, 1}}_{[r,\, 2m,\,2 K]}$,
with $r\leq r_1(R)$, $m\leq m_1(r,M)$, $\tau\leq \tau_6(r,m)$,
$K=k_1(r,m)$, 
there exists $u_0\in\mathcal L_{[R,M]}$ such that 
\begin{equation}
\label{exact-controll}
S_\tau u_0(x)=
\begin{cases}
\psi (x)\quad&\text{if}\qquad x\in [- r,  r]^N,
\\
\noalign{\smallskip}
S_\tau 0 (x) \quad&\text{if}\qquad x\in \R^N\setminus [- r, r]^N.
\end{cases}
\end{equation}
\end{proposition}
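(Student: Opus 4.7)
My proof plan is to construct $u_0$ by running the Hamiltonian characteristic system backward in time from the graph of $\nabla \widetilde \psi$ at time $\tau$, where $\widetilde \psi$ is the continuous extension of $\psi$ by $S_\tau 0$ outside $[-r,r]^N$. The key technical leverage is Proposition~\ref{le:semiconvexity}: once the constructed $u_0$ is shown to be semiconvex with sufficiently small constant, that proposition yields a $C^1$ classical solution of~\eqref{HJ} on $]0,\tau]\times [-l,l]^N$, and the classical method of characteristics makes the inversion rigorous.

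I will fix the parameters in stages. Applying Remark~\ref{semiconvexity} to $u_0 \equiv 0$ (trivially semiconvex with constant $0$) shows that $S_\tau 0$ is itself a $C^1$ classical solution of~\eqref{HJ} on a large cube for $\tau$ small. The boundary matching in definition~\eqref{LSRM-1} then ensures that
\[
\widetilde \psi(x) \doteq \begin{cases} \psi(x), & x \in [-r,r]^N, \\ S_\tau 0(x), & x \in \R^N \setminus [-r,r]^N, \end{cases}
\]
is a $C^1$ function on $\R^N$. Next, for each $x \in \R^N$, let $(\xi_x, p_x) \in AC([0,\tau],\R^N)^2$ be the solution of~\eqref{hamilt-syst} with terminal data $\xi_x(\tau) = x$, $p_x(\tau) = \nabla \widetilde \psi(x)$. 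For $\tau \leq \tau_6(r,m)$ small, the map $\Phi(x) \doteq \xi_x(0)$ is a bi-Lipschitz homeomorphism of $\R^N$ (it is a perturbation of the identity of size $O(\tau)$ controlled by Gronwall), and I define
\[
u_0(y) \doteq \widetilde \psi\bigl(\Phi^{-1}(y)\bigr) - \int_0^\tau L\bigl(\xi_{\Phi^{-1}(y)}(s), \dot \xi_{\Phi^{-1}(y)}(s)\bigr)\, ds.
\]

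There are three items to verify. Outside $[-r,r]^N$, where $\widetilde \psi = S_\tau 0$, Theorem~\ref{prop-viscsol}-(iii) identifies $(\xi_x, p_x)$ with the minimizing trajectory of the Bolza problem with zero initial datum; in particular $p_x(0) = 0$ and the defining identity gives $u_0(\Phi(x)) = S_\tau 0(x) - \int_0^\tau L\, ds = 0$. Hence $\mathrm{supp}(u_0) \subset \Phi([-r,r]^N) \subset [-R,R]^N$ for $r \leq r_1(R)$ and $\tau$ small. The bound $\Lip[u_0] \leq M$ follows from $\nabla u_0(\Phi(x)) = p_x(0)$ together with a Gronwall estimate on the backward Hamiltonian flow, provided $m \leq m_1(r,M)$. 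Semiconvexity of $u_0$ comes from linearizing the Hamiltonian flow around each trajectory and using the semiconcavity (constant $2K$) of $\widetilde \psi$: the resulting semiconvexity constant of $u_0$ is of order $2K + O(\tau)$, and selecting $k_1(r,m)$ small will push it below the threshold $K_T$ of Proposition~\ref{le:semiconvexity}. That proposition then tells me that $S_t u_0$ is a $C^1$ classical solution on $]0,\tau]\times [-l,l]^N$; the method of characteristics gives a unique representation through the forward Hamiltonian trajectories starting from $(y, \nabla u_0(y))$, and by construction these coincide with the backward trajectories used to define $u_0$. This yields $S_\tau u_0 = \widetilde \psi$ on the relevant region, which is the desired identity~\eqref{exact-controll}.

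The main obstacle will be the joint calibration of the four constants $r_1, m_1, k_1, \tau_6$: three quantitative perturbative estimates on the Hamiltonian flow over short time must hold simultaneously, namely that the backward flow $\Phi$ is a bi-Lipschitz homeomorphism with $\Phi([-r,r]^N) \subset [-R,R]^N$; that the resulting $u_0$ satisfies $\mathrm{supp}(u_0) \subset [-R,R]^N$ and $\Lip[u_0]\leq M$; and that the semiconvexity constant of $u_0$ falls below the threshold of Proposition~\ref{le:semiconvexity}. The last of these is the most delicate, since it requires a precise coupling between the semiconcavity of $\psi$ and the backward propagation of second-order information along characteristics, echoing the Gronwall-type computations already carried out in the proof of Proposition~\ref{le:semiconvexity}.
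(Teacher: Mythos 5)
Your strategy is the right one in spirit -- it is essentially the paper's strategy: extend $\psi$ by $S_\tau 0$ to get a global $C^1$ semiconcave profile, run characteristics backward, invoke Proposition~\ref{le:semiconvexity}/Remark~\ref{semiconvexity} to get a classical backward solution, and read off $u_0$ at $t=0$. But there are two genuine gaps in the execution that the paper spends most of Steps~2--4 on.

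First, the claim $u_0(\Phi(x))=0$ for $x \notin [-r,r]^N$ (and hence $\mathrm{supp}(u_0)\subset \Phi([-r,r]^N)$) rests on identifying the backward characteristic from $(x,\nabla S_\tau 0(x))$ with the \emph{minimizing} trajectory of the Bolza problem with zero initial cost, for every $x$ outside $[-r,r]^N$. This identification is only valid where $S_t 0$ is a classical $C^1$ solution, which the paper establishes only on a bounded cube $I_r^N$; far from the origin, $S_t 0$ may develop singularities and the characteristic need not be optimal, so your $u_0$ need not vanish there. The paper circumvents this by explicitly truncating: it defines $u_0^\sharp$ in~\eqref{u0-sharp} as $u(0,\cdot)$ on $\Omega_1$, $\max\{0,u(0,\cdot)\}$ on an annular region $\Omega_2\setminus\Omega_1$, and $0$ outside $\Omega_2$, where $\Omega_1,\Omega_2$ are the images under backward flow of $\,]-r,r[^N$ and $\,]-6r,6r[^N$. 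Without this truncation the compact-support requirement of $\mathcal{L}_{[R,M]}$ is not met.

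Second, and more fundamentally, your closing step -- \emph{``the method of characteristics gives a unique representation through the forward Hamiltonian trajectories \dots\ and by construction these coincide with the backward trajectories used to define $u_0$. This yields $S_\tau u_0 = \widetilde\psi$''} -- is not rigorous. The method of characteristics produces a \emph{local} classical solution; the viscosity solution $S_\tau u_0(x)$ is defined by a \emph{global} minimization over all competing arcs, so you must rule out that a trajectory launched from far outside $\Phi([-r,r]^N)$ (where your $u_0$ has been modified or set to zero) gives a lower value. Precisely because the truncation makes $u_0^\sharp \geq u(0,\cdot)$ on $\Omega_2$ and $u_0^\sharp=0$ outside, the paper needs the squeeze arguments in Cases~1--3 of Step~4: it uses the confinement estimates on forward/backward minimizers from Remark~\ref{l1bound-St0} (constants $\tau_1,\tau_2,c_6,c_7$) to show that the true $(CV)_{\tau,x}$-minimizer for $u_0^\sharp$ lands where $u_0^\sharp$ can be compared with $u(0,\cdot)$ or with $0$. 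Your sketch does not address this confinement/comparison step, which is where the joint calibration of $r_1,m_1,k_1,\tau_6$ that you flagged as ``the main obstacle'' actually comes into play; without it the equality $S_\tau u_0=\widetilde\psi$ does not follow from the bi-Lipschitz backward flow alone.
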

\begin{proof}
\quad
\\
{\bf 1.} 
Given $R,M>0$, fix $ r,  m>0$ such that
\begin{equation}
\label{rbar-mbar-conds1-2}
\begin{gathered}
r\leq r_1(R),\qquad\quad
m\leq 
 \min
 \bigg\{\frac{M}{10},\
\frac{1}{120\cdot \sqrt{N}\cdot  r}\cdot \min\Big\{c_6(36\cdot  r), \, c_7(6\cdot  r)\Big\}
\bigg\},
\end{gathered}
\end{equation}
where $c_6(36\cdot  r)$,  $c_7(6\cdot  r)>0$
are constants enjoying the properties stated in Remark~\ref{l1bound-St0}.
Moreover, 
 choose positive constants $\tau_1(36\cdot  r)$, 
$\tau_2(6\cdot  r)$, 
$\tau_3\big(c_6(36\cdot r), 72\cdot  r, m\big)$, 
$c_8(72\cdot  r)$,  
according with  Remark~\ref{l1bound-St0} and Remark~\ref{lipschitzianity},
so that:
\begin{itemize}
\item[a)] 
For every $\tau\leq \tau_1(36\cdot  r)$, $u_{0}\in \Lip(\mathbb{R}^N)\cap\mathbf{L}^\infty(\mathbb{R}^N)$
with $\|u_0\|_{\strut\mathbf{L}^\infty(\mathbb{R}^N)}\leq c_6(36\cdot  r)$,
the following holds:
\begin{itemize}
\item[--]
letting $\xi^*$ be any minimizer for~$(CV)_{t,x}$, $t\leq \tau$, 
one has 
\begin{equation}
\label{xistar-bounds3c}
\begin{aligned}
x\in[- r,  r]^N
\qquad&\Longrightarrow\qquad
\big|\xi^*(s)\big|\leq 2\cdot  r
\\
\noalign{\smallskip}
x\in[-36\cdot  r, 36\cdot  r]^N
\qquad&\Longrightarrow\qquad
\big|\xi^*(s)\big|\leq 72 \cdot  r
\end{aligned}
\qquad\quad\forall~s\in[0,t]\,;
\end{equation}
\item[--]
letting $\xi^*$ be any maximizer for
\begin{equation*}
\tag*{$\text{(CV)}^{t,x}$}
\max_{\xi\in AC([t,\tau],\mathbb{R}^N)}\bigg\lbrace{u_0(\xi(\tau))-
\int_t^{\tau}L(\xi(s),\dot{\xi}(s))ds\ \Big|\ \xi(t)=x\ \bigg\rbrace},
\end{equation*}
%
with $t\leq \tau$, one has 
\begin{equation}
\begin{aligned}
\xi^*(\tau)\in[- r,  r]^N
\qquad&\Longrightarrow\qquad
\big|\xi^*(s)\big|\leq 2\cdot  r
\\
\noalign{\smallskip}
\xi^*(\tau)\in[-6\cdot r, 6\cdot r]^N
\qquad&\Longrightarrow\qquad
\big|\xi^*(s)\big|\leq 12\cdot  r
\\
\noalign{\smallskip}
\xi^*(\tau)\in[-36\cdot  r, 36\cdot  r]^N
\qquad&\Longrightarrow\qquad
\big|\xi^*(s)\big|\leq 72 \cdot  r
\end{aligned}
\qquad\qquad\forall~s\in[t,\tau]\,.
\end{equation}
\end{itemize}
\item[b)] 
For every $\tau\leq \tau_2(6\cdot  r)$, $u_{0}\in \Lip(\mathbb{R}^N)\cap\mathbf{L}^\infty(\mathbb{R}^N)$
with $\|u_0\|_{\strut\mathbf{L}^\infty(\mathbb{R}^N)}\leq c_7(6\cdot  r)$,
the following holds:
\begin{itemize}
\item[--]
letting $\xi^*$ be any minimizer for~$(CV)_{t,x}$, $t\leq \tau $, $x\in\R^N\setminus \,]\!-6\cdot  r,\,6\cdot  r[^N$, one has 
\begin{equation}
\label{xistar-bounds4b}
\big|\xi^*(s)\big|> \frac{|x|}{3}
\qquad\quad\forall~s\in[0,t]\,.
\end{equation}
\item[--]
letting $\xi^*$ be any maximizer for~$(CV)^{t,x}$, $t\leq \tau $, 
with $\xi^*(\tau)\in\R^N\setminus \,]\!-6\cdot  r,\,6\cdot  r[^N$, one has 
\begin{equation}
\label{xistar-bounds4c}
\big|\xi^*(s)\big|> \frac{|\xi^*(\tau)|}{3}
\qquad\quad\forall~s\in[t,\tau]\,.
\end{equation}
\end{itemize}
\item[c)] 
Setting
\begin{equation}
\label{Irbar-def}
I_{r}\doteq\big [\!-\!72\cdot  r,\, 72\cdot  r\big],
\end{equation}
there holds
\begin{equation}
\label{linf-St0-bound-2}
\big|S_t 0(x)\big|\leq c_8(72 \cdot  r)\cdot t\qquad\quad
\forall~x\in I_{r}^N,
\ t >0\,. 
\end{equation} 
\item[d)] For every $\tau\leq \tau_3\big(c_6(36\cdot  r), 72\cdot  r,  m\big)$, 
and for any  $u_{0}\in \Lip(\mathbb{R}^N)\cap\mathbf{L}^\infty(\mathbb{R}^N)$
with \linebreak
$\|u_0\|_{\strut\mathbf{L}^\infty(\mathbb{R}^N)}\leq c_6(36\cdot  r)$,
 one has
\begin{equation}
\label{Lip-Stu0}
\begin{aligned}
\Lip[u_0]\leq  m
\qquad&\Longrightarrow\qquad
\Lip\big[S_\tau u_0;\, I_{r}^N\big]\leq 2\cdot m\,,
\\
\noalign{\smallskip}
\Lip[u_0]\leq 5 m
\qquad&\Longrightarrow\qquad
\Lip\big[S_\tau u_0;\, I_{r}^N\big]\leq 10\cdot m\,.
\end{aligned}
\end{equation}
\end{itemize}
Next, 
set
\begin{equation}
\label{tbar2-def}
\overline\tau(r,m)\doteq
\min
\bigg\{
\tau_1(36\!\cdot\! r),\, \tau_2(6\!\cdot\! r),\, \tau_3\big(c_6(36\!\cdot\! r), 72\!\cdot\! r, m\big),\, 
\frac{\min\{c_6(36\!\cdot\! r), c_7(6\!\cdot\! r)\}-8 m\!\cdot\!\sqrt{N} \!\cdot\!  r}{2 c_8(72\!\cdot\!  r)}
\bigg\}\!,
\end{equation}
%
and let $\overline k(r,m)>0$ be
a semiconcavity constant for $S_\tau 0$, $\tau\leq\overline\tau(r,m)$,
on $I_{r}^N$. 
%
%
%
Then, fix $K\geq \overline k(r,m)$, and take
\begin{equation}
\label{tbar3-def}
\tau\leq \overline\tau'(r,m)\doteq \min
\Big\{
\overline\tau(r,m),\, 
\tau_5(c_6(36\cdot r), 72\cdot  r, 3K)
\Big\},
\end{equation}
where $\tau_5(c_6(36\cdot r), 72\cdot  r, 3K)$ is
a constant with the property stated in Remark~\ref{semiconvexity}.
Observe that, by property~(d) above, one has $\Lip[S_\tau 0;\, I_{r}^N]\leq 2\cdot m$.
Moreover,  since the zero map is semiconvex  with semiconvexity
costant any $K>0$, by virtue of Remark~\ref{semiconvexity} we deduce that  $S_t 0(x)$ provides
a $C^1$ classical solution of~\eqref{HJ} on~$[0, \tau]\times I_{r}^N$.
This, in particular, implies by~\eqref{psit-def} that $S_\tau 0$ is a $C^1$ map on $I_{r}^N$.
Then, given 
\begin{equation}
\label{psi-final-data-1}
\psi \in \mathcal{SC}^{^{S_{\tau} 0, 1}}_{[r,\,4 m,\,2K]},
\end{equation}
%
we can define a $C^1$ semiconcave map $\widetilde\psi^\tau:\R^N\to\R$ with the properties:
\begin{itemize}
\item[(i)]
\begin{equation}
\label{psit-def}
\begin{aligned}
\widetilde\psi^\tau(x)&=\psi(x)\qquad\qquad\forall~x\in [- r,  r]^N\,,
\\
\widetilde\psi^\tau(x)&=S_\tau 0(x)\qquad\ \ \ \, \forall~x\in 
I_{r}^N\setminus [- r,  r]^N\,;
\end{aligned}
\end{equation}
\item[(ii)]
$\widetilde\psi^\tau$ has Lipschitz constant $5 m$ and  semiconcavity constant $3K$ on $\R^N$;
\item[(iii)]
\begin{equation}
\label{linf-bound-psit1}
\big\|\widetilde\psi^\tau\big\|_{\strut\mathbf{L}^\infty(\R^N)}
\leq 2 \max\Big\{\big\|\psi\big\|_{\strut\mathbf{L}^\infty([- r,  r]^N)},
\big\|S_{\tau}0\big\|_{\strut\mathbf{L}^\infty(I_{r}^N)}
\Big\}\,. 
\end{equation}
\end{itemize}
Recall that, by definitions~\eqref{LRM}, \eqref{LSRM}, \eqref{LSRM-1},
 $\psi$ is a Lipschitz continuous map on $[- r,  r]^N$
with Lipschitz constant~$4 m$, and that $\psi = S_{\tau} 0$, 
$D \psi = D S_{\tau} 0$
on $\partial [- r,  r]^N$.
Hence, because of~\eqref{linf-St0-bound-2}, \eqref{tbar2-def}, 
one finds
\begin{equation*}
\big\|S_{\tau} 0\big\|_{\strut\mathbf{L}^\infty(I_{r}^N)}
\leq c_8(72\cdot  r)\cdot \overline\tau \leq \frac{\min\{c_6(36\cdot r), c_7(6\cdot r)\}}{2},
\end{equation*}
and
\begin{equation*}
\begin{aligned}
\big\|\psi\big\|_{\strut\mathbf{L}^\infty([- r,  r]^N)}
&\leq 
\big\|S_{\tau} 0\big\|_{\strut\mathbf{L}^\infty([- r,  r]^N)} + 
4 m\cdot \sqrt{N} \cdot  r\,,
\\
\noalign{\smallskip}
&\leq c_8(72\cdot  r)\cdot \overline\tau + 4 m\cdot \sqrt{N} \cdot  r
\\
\noalign{\smallskip}
&\leq \frac{\min\{c_6(36\cdot r), c_7(6\cdot r)\}}{2},
\end{aligned}
\end{equation*}
which, in turn, together with \eqref{linf-bound-psit1}, yields
\begin{equation}
\label{linf-bound-psi2}
\big\|\widetilde\psi^\tau\big\|_{\strut\mathbf{L}^\infty(\R^N)}
\leq \min\big\{c_6(36\cdot  r), \, c_7(6\cdot  r)\big\}\,.
\end{equation}
\quad
\\
\\
{\bf 2.} 
We will show that, for 
$\tau$ satisfying~\eqref{tbar3-def}, with $\overline\tau(r,m)$ as in~\eqref{tbar2-def}, 
the map $\widetilde\psi^\tau$ defined above can be obtained as the value at time $\tau$
of a classical solution to~\eqref{HJ} by reversing the direction of time 
and constructing a backward solution to~\eqref{HJ} that starts at time $\tau$ from $\widetilde\psi^\tau$.
Namely, set
\begin{equation}
\label{in-data-back}
w_0^\tau(x)\doteq -\widetilde\psi^\tau(-x)
\qquad\forall~x\in\mathbb{R}^N,
\end{equation}
and consider the viscosity solution 
$w^\tau(t,x)$ of
\begin{equation}
\label{HJhat}
w_t(t,x)+H\big(\!-\!x,\nabla_{\!x}  w(t,x)\big)=0\,,
\qquad t\geq 0,\quad x\in \mathbb{R}^N,
\end{equation}
with initial datum
\begin{equation}
\label{forw-sol-def1}
w(0,\cdot)=w_0^\tau\,.
\end{equation}
Notice that the Hamiltonian $\widehat H (x,p)\doteq H(-x,p)$ satisfy the assumptions~{\bf (H1)-(H2)}
as \linebreak does~$H(x,p)$.
Moreover, by~\eqref{in-data-back} and because of (ii), $w_0^\tau$ is semiconvex with semiconvexity
costant~$3K$.
Thus, invoking Remark~\ref{semiconvexity}
and thanks to~\eqref{tbar3-def}, \eqref{linf-bound-psi2}, we know that 
the function 
$w(t,x)\doteq w^{\tau}(t,x)$
is a $C^1$~classical solution of~\eqref{HJ} on $[0, \tau]\times I_{r}^N$,
with $I_{r}$ as in~\eqref{Irbar-def}.
Furthermore, by properties (d) and (ii) above, and by virtue of~\eqref{tbar2-def}, \eqref{tbar3-def},  we deduce that 
\begin{equation}
\label{lipsch-w4}
\Lip\big[w(\tau,\cdot) ; I_{r}^N\big]\leq 10\, m\,.
\end{equation}
Next, observe that by a direct computation the function
\begin{equation}
\label{backw-sol-def1}
u(t,x)\doteq - w(\tau-t,-x)
\end{equation}
is also a $C^1$ classical solution of~\eqref{HJ} on $[0, \tau] \times I_{r}^N$.
Moreover, because of~\eqref{in-data-back}, \eqref{forw-sol-def1}, \eqref{backw-sol-def1}, one has
\begin{equation}
\label{final-forw-sol}
u(\tau,x)= - w_0^{\tau}(-x)= \widetilde\psi^{\tau}(x)
\qquad\forall~x\in\R^N.
\end{equation}
%
Then, for every 
$y\in  [-18\cdot  r, 18\cdot r]^N$,
consider
the pair $(\xi^y, p^y) \in  \big(AC([0,\tau],\mathbb{R}^N)\big)^{\!2}$
that  satisfies the Hamiltonian system 
\begin{equation}
\label{hamilt-syst-3}
\begin{cases}
\hspace{.cm}
\dot \xi = D_p H(\xi, p),&
\\
\hspace{.cm}
\dot p = - D_x H(\xi, p),&
\end{cases}
\end{equation}
on $[0,\tau]$, with terminal condition
\begin{equation}
\label{final-data-3}
\begin{cases}
\hspace{.cm}
\xi^y(\tau) =y,&
\\
\hspace{.cm}
p^y (\tau)= D \widetilde\psi^{\tau}(y).&
\end{cases}
\end{equation}
Notice that, 
for any $t\in [0,\tau]$, $y\in  [-36\cdot  r, 36\cdot  r]^N$,
the restriction of $\xi^y$ to $[t, \tau]$ provides the (unique) optimal solution 
for the backward maximization problem
\begin{equation*}
\tag*{$\text{(CV)}^{t,\xi^y(t)}$}
\begin{aligned}
&\max_{\xi\in AC([t,\tau],\mathbb{R}^N)}\bigg\lbrace{\widetilde\psi^{\tau}(\xi(\tau))-\int_t^{\tau}L(\xi(s),\dot{\xi}(s))ds\ \Big|\ \xi(t)=\xi^y(t)\ \bigg\rbrace}
\end{aligned}
\end{equation*}
(cfr.~\cite{BCJS}).
Moreover, since by~\eqref{tbar2-def}, \eqref{tbar3-def}
we have chosen $\tau\leq \min\{\tau_1(36\cdot  r), \, \tau_2(6\cdot  r)\}$,
and because of~\eqref{linf-bound-psi2},
relying on properties (a), (b) at point 1, and  recalling \eqref{Irbar-def} 
we find
\begin{equation}
\label{xistar-bounds4d}
\begin{aligned}
y\in  [- r,  r]^N
\qquad&\Longrightarrow\qquad
\xi^y(s)\in B\big(0, 2\cdot r\big)
%
\\
\noalign{\smallskip}
y\in  [-6\cdot  r,\, 6\cdot  r]^N
\qquad&\Longrightarrow\qquad
\xi^y(s)\in  B\big(0, 12\cdot r\big)
%
\\
\noalign{\smallskip}
y\in  \partial\, [-6\cdot  r,\, 6\cdot  r]^N
\qquad&\Longrightarrow\qquad
\xi^y(s)\in  \R^N\setminus B\big(0, 2\cdot r\big)
\\
\noalign{\smallskip}
y\in  [-36\cdot  r, 36\cdot  r]^N
\qquad&\Longrightarrow\qquad
\xi^y(s)\in  I_{r}
\end{aligned}
\quad\qquad\forall~s\in[0,\tau]\,.
\end{equation}
In particular, if $y\in  [-36\cdot  r, 36\cdot  r]^N\setminus \,]- r,  r[^N$,
because of~\eqref{psit-def} the pair $(\xi^y, p^y)$ satisfies the Hamiltonian system~\eqref{hamilt-syst-3}
with terminal condition
\begin{equation}
\label{final-data-4}
\begin{cases}
\hspace{.cm}
\xi^y(\tau) =y,&
\\
\hspace{.cm}
p^y (\tau)=  D S_{\tau} 0(y),&
\end{cases}
\end{equation}
and 
the restriction of $\xi^y$ to $[t, \tau]$ provides the (unique) optimal solution 
for the backward maximization problem
\begin{equation*}
\tag*{$\text{(CV)}^{t,\xi^y(t)}$}
\max_{\xi\in AC([t,\tau],\mathbb{R}^N)}\bigg\lbrace{S_{\tau}0(\xi(\tau))-\int_t^{\tau}L(\xi(s),\dot{\xi}(s))ds\ \Big|\ \xi(t)=\xi^y(t)\ \bigg\rbrace}.
\end{equation*}
Hence, since $u(t,x)$ and $S_t 0(x)$ are both $C^1$~classical solutions of~\eqref{HJ} 
on $[0, \tau]\times I_{r}^N$, we deduce
\begin{equation}
\label{u-back-max}
\begin{aligned}
u(t, \xi^y(t)) &= 
\widetilde\psi^{\tau}(y)-\int_t^{\tau}L(\xi^y(s),\dot{\xi^y}(s))ds
\\
&=S_{\tau}0(y)-\int_t^{\tau}L(\xi^y(s),\dot{\xi^y}(s))ds
\\
&=S_t 0 (\xi^y(t)),
\end{aligned}
\end{equation}
for every $ t\in [0,\tau], \ y\in [-36\cdot  r, 36\cdot  r]^N\setminus \,]- r,  r[^N$.
This, in particular, implies that 
\begin{equation}
\label{u-back-max-indata}
\qquad\qquad\qquad u(0, \xi^y(0)) = S_0\, 0( \xi^y(0))=0
\qquad\ \forall~y\in [-36\cdot  r,\, 36\cdot  r]^N\setminus \,]- r,  r[^N.
\end{equation}
Moreover,  for every $y\in [-36\cdot  r,\, 36\cdot  r]^N$,
the first component $\xi^y$ of the solution to~\eqref{hamilt-syst-3}-\eqref{final-data-3}
provides the minimizer for~$(CV)_{\tau,y}$,  with initial cost $u(0,\cdot)$. 
\quad
\\
{\bf 3.} 
Fix $\tau>0$ satisfying~\eqref{tbar2-def}, \eqref{tbar3-def}
and let $u(t,x)$ be the map defined in~\eqref{backw-sol-def1}. For every 
$y\in  [-6\cdot  r, 6\cdot r]^N$,
let $\xi^y$ be the first component of the solution to~\eqref{hamilt-syst-3}-\eqref{final-data-3}.
By the regularity of $\widetilde\psi^{\tau}$ it follows that the sets
\begin{equation}
\label{D0-def}
\qquad
\Lambda_1\doteq
\Big\{\xi^y(0) \ \big| \  y\in  \partial\,[-  r,  r]^N
\Big\},
\qquad\quad
\Lambda_2\doteq
\Big\{\xi^y(0) \ \big| \  y\in  \partial\,[-6\cdot   r, 6\cdot   r]^N
\Big\}
\end{equation}
are piecewise $C^1$, closed, hypersurfaces that separate $\R^N$ in two connected
components. Call $\Omega_i$, $i=1,2$, the bounded connected domains that
have $\Lambda_i$ as boundary, so that there holds
%
\begin{equation}
\label{Omt-def1}
\begin{aligned}
\Big\{\xi^y(0) \ \big| \  y\in  \,]\!- r,\,  r[^N
\Big\}&\subset \Omega_1,
\qquad\quad
\partial\, \Omega_1= \Lambda_1\,,
\\
\noalign{\smallskip}
\Big\{\xi^y(0) \ \big| \  y\in  \,]\!-6\cdot  r, \,6\cdot  r[^N
\Big\}&\subset \Omega_2,
\qquad\quad
\partial\, \Omega_2= \Lambda_2\,.
\end{aligned}
\end{equation}
Observe that, by~\eqref{Irbar-def}, \eqref{xistar-bounds4d}, 
and by the definition of $\Omega_i$, we have
\begin{equation}
\label{xistar-bounds5}
\overline{\Omega}_1\subset B\big(0, 2\cdot r\big)
\subset\overline{\Omega}_2 \subset  B\big(0, 12\cdot r\big)
\subset I_{r}^N.
\end{equation}
Moreover, 
because of~\eqref{u-back-max-indata}, one has
\begin{equation}
\label{u0-Omr}
u(0, x)=0\qquad\quad\forall~x\in  \Lambda_1\cup \Lambda_2\,.
\end{equation}
Hence, by virtue of~\eqref{xistar-bounds5},
and recalling~\eqref{lipsch-w4}, \eqref{backw-sol-def1}, we deduce
\begin{equation}
\label{linf-bound-u0}
|u(0,x)|\leq 120\, m\, \cdot r
\qquad\quad\forall~x\in {\Omega}_2\,.
\end{equation}
Then, define the function
\begin{equation}
\label{u0-sharp}
u_0^\sharp (x)\doteq
\begin{cases}
u(0, x)\quad&\text{if}\qquad x\in \Omega_1
\\
\max\big\{0, u(0, x)\big\}  \quad&\text{if}\qquad x\in  \Omega_2\setminus \Omega_1
%
\\
\ \  0 \quad&\text{if}\qquad x\in \R^N\setminus \Omega_2
\end{cases}
\end{equation}
and notice that, by~\eqref{xistar-bounds5}, \eqref{u0-Omr}, $u_0^\sharp$ is a continuous map, while
\eqref{lipsch-w4}, \eqref{backw-sol-def1}, \eqref{xistar-bounds5}, \eqref{u0-sharp}
imply
\begin{equation}
\label{u0-sharp-prop1}
\mathrm{supp}\big(u_0^\sharp\big)\subset [-12\cdot  r, 12\cdot  r]^N\,,
\qquad\qquad
\Lip\big[u_0^\sharp\big]\leq 10\, m\,.
\end{equation}
Therefore, recalling definition~\eqref{CLM-1}, and because of~\eqref{rbar-mbar-conds1-2}, there holds
\begin{equation}
\label{u0-sharp-prop2}
u_0^\sharp\in \mathcal L_{[12\cdot  r,\, 10\, m]}
\subset \mathcal L_{[R,M]}\,.
\end{equation}
We claim that
\begin{equation}
\label{u0-sharp-prop3}
S_{\tau} u_0^\sharp (x) =
\begin{cases}
\psi (x)\quad&\text{if}\qquad x\in [- r,  r]^N,
\\
\noalign{\smallskip}
S_{\tau} 0 (x) \quad&\text{if}\qquad x\in \R^N\setminus [- r,  r]^N.
\end{cases}
\end{equation}
In fact, for every $x\in\R^N$ let $\xi^\sharp_x$, $\xi^*_x$ be any minimizer for~$(CV)_{\tau,x}$ 
with initial cost~$u_0^\sharp$, $u(0,\cdot)$, respectively, so that one has
\begin{equation}
\label{Stauub-1}
\begin{aligned}
\xi^\sharp_x(\tau)&=x,\qquad\quad 
S_{\tau} u_0^\sharp (x)= u_0^\sharp(\xi^\sharp_x(0))+\int_0^\tau L(\xi^\sharp_x(s),\dot{\xi^\sharp_x}(s))ds\,,
\\
\noalign{\smallskip}
\xi^*_x(\tau)&=x,\qquad\quad 
u(\tau,x)= u(0,\xi^*_x(0))+\int_0^\tau L(\xi^*_x(s),\dot{\xi^*_x}(s))ds\,.
\end{aligned}
\end{equation}
Observe that, because of~\eqref{rbar-mbar-conds1-2},  \eqref{linf-bound-u0}, \eqref{u0-sharp}, one has
\begin{equation}
\label{linf-bound-u0-2}
\|u_0^\sharp\|_{\strut\mathbf{L}^\infty(\mathbb{R}^N)}\leq 
\min\big\{c_6(36\cdot  r), \, c_7(6\cdot  r)\big\}.
\end{equation}
Then, by the choice of $\tau$ in~\eqref{tbar2-def}, \eqref{tbar3-def},
relying on properties (a), (b) at point 1 and on~\eqref{xistar-bounds5}, we deduce that 
\begin{equation}
\label{xistar-bounds6}
\begin{aligned}
x \in[- r,  r]^N
\qquad&\Longrightarrow\qquad
\xi^\sharp_x(0)\in \Omega_2
\\
\noalign{\smallskip}
x\in \R^N\setminus [-36\cdot  r, \, 36\cdot  r]^N
\qquad&\Longrightarrow\qquad
\xi^\sharp_x(0)\in  \R^N\setminus \Omega_2.
\end{aligned}
\end{equation}
\quad
\\
{\bf 4.} 
In order to establish~\eqref{u0-sharp-prop3}, we shall distinguish three cases.
\\
\\
\noindent
Case 1: $x\in [- r, \,  r]^N$.\\
 By definition of $\Omega_1$ we have $\xi^*_x(0)\in\Omega_1$
for all $x\in [- r, \,  r]^n$.
Therefore, because of~\eqref{u0-sharp},
there holds $u(0, \xi^*_x(0)) = u^\sharp(\xi^*_x(0))$, while~\eqref{u0-sharp}, \eqref{xistar-bounds6}
imply $u^\sharp(\xi^\sharp_x(0))\geq u(0, \xi^\sharp_x(0))$.
Hence,  by~\eqref{Stauub-1}, we deduce
\begin{equation}
\label{St-eq1}
\begin{aligned}
u(\tau,x)&= u^\sharp(\xi^*_x(0)) +\int_0^\tau L(\xi^*_x(s),\dot{\xi^*_x}(s))ds
\\
\noalign{\smallskip}
&\geq S_{\tau} u_0^\sharp (x)
\\
\noalign{\smallskip}
&= u_0^\sharp(\xi^\sharp_x(0))+\int_0^\tau L(\xi^\sharp_x(s),\dot{\xi^\sharp_x}(s))ds
\\
\noalign{\smallskip}
&\geq u(0, \xi^\sharp_x(0)) + \int_0^\tau L(\xi^\sharp_x(s),\dot{\xi^\sharp_x}(s))ds
\\
\noalign{\smallskip}
&\geq u(\tau,x),
\end{aligned}
\end{equation}
which proves~\eqref{u0-sharp-prop3},
recalling \eqref{psit-def}, \eqref{in-data-back}, \eqref{forw-sol-def1}, \eqref{backw-sol-def1}.
\\
\\
\noindent
Case 2: $x\in [-36\cdot  r, \, 36\cdot  r]^N\setminus [- r, \, r]^N$.\\
By the observations at point 3 and because of~\eqref{u-back-max-indata}, \eqref{u0-sharp},
we know that
$u(0, \xi^*_x(0)) = 0 = u^\sharp(\xi^*_x(0))$
for all $x\in [-36\cdot  r, \, 36\cdot  r]^N\setminus [- r, \,  r]^N$.
Moreover, by~\eqref{Irbar-def},
\eqref{psit-def}, \eqref{in-data-back}, \eqref{forw-sol-def1}, \eqref{backw-sol-def1}, one has
\begin{equation}
\label{u0-St0}
u(\tau,x)=S_\tau 0(x)\qquad\quad\forall~x\in [-36\cdot  r, \, 36\cdot  r]^N\setminus [- r, \,  r]^N\,.
\end{equation}
Thus, if $u_0^\sharp(\xi^\sharp_x(0))\geq u(0, \xi^\sharp_x(0))$, by~\eqref{Stauub-1} we derive
\begin{equation}
\label{St-eq2}
\begin{aligned}
u(\tau,x)&= \int_0^\tau L(\xi^*_x(s),\dot{\xi^*_x}(s))ds
\\
\noalign{\smallskip}
&\geq S_{\tau} u_0^\sharp (x)
\\
\noalign{\smallskip}
&= u_0^\sharp(\xi^\sharp_x(0))+\int_0^\tau L(\xi^\sharp_x(s),\dot{\xi^\sharp_x}(s))ds
\\
\noalign{\smallskip}
&\geq u(0, \xi^\sharp_x(0)) + \int_0^\tau L(\xi^\sharp_x(s),\dot{\xi^\sharp_x}(s))ds
\\
\noalign{\smallskip}
&\geq u(\tau,x).
\end{aligned}
\end{equation}
Otherwise, if $u_0^\sharp(\xi^\sharp_x(0))< u(0, \xi^\sharp_x(0))$,
by\eqref{u0-sharp}
it must be $u_0^\sharp(\xi^\sharp_x(0))=0$.
Hence, relying on~\eqref{Stauub-1}, \eqref{u0-St0}, we get
\begin{equation}
\label{St-eq3}
\begin{aligned}
S_\tau 0(x)&= \int_0^\tau L(\xi^*_x(s),\dot{\xi^*_x}(s))ds
\\
\noalign{\smallskip}
&\geq S_{\tau} u_0^\sharp (x)
\\
\noalign{\smallskip}
&= \int_0^\tau L(\xi^\sharp_x(s),\dot{\xi^\sharp_x}(s))ds
\\
\noalign{\smallskip}
&\geq S_\tau 0(x).
\end{aligned}
\end{equation}
Together~\eqref{u0-St0}, \eqref{St-eq2}, \eqref{St-eq3}, yield~\eqref{u0-sharp-prop3}.
\\
\\
\noindent
Case 3: $x\in \R^N\setminus [-36\cdot  r, \, 36\cdot  r]^N$.\\
By~\eqref{u0-sharp}, \eqref{xistar-bounds6} we know that 
$u_0^\sharp(\xi^\sharp_x(0))=0$ for all $x\in \R^N\setminus [-36\cdot  r, \, 36\cdot  r]^N$.
Moreover, letting $\xi_x^o$ be  be any minimizer for~$(CV)_{\tau,x}$ 
with initial cost zero, 
relying on properties (b) at point 1 and on~\eqref{xistar-bounds5}, \eqref{u0-sharp},
we deduce that 
also $u_0^\sharp(\xi^o_x(0))=0$ for all $x\in \R^N\setminus [-36\cdot  r, \, 36\cdot  r]^N$.
Then, using~\eqref{Stauub-1}, we derive
\begin{equation}
\label{St-eq4}
\begin{aligned}
S_\tau 0(x)&= \int_0^\tau L(\xi^o_x(s),\dot{\xi^o_x}(s))ds
\\
\noalign{\smallskip}
&\geq S_{\tau} u_0^\sharp (x)
\\
\noalign{\smallskip}
&= \int_0^\tau L(\xi^\sharp_x(s),\dot{\xi^\sharp_x}(s))ds
\\
\noalign{\smallskip}
&\geq S_\tau 0(x),
\end{aligned}
\end{equation}
which proves~\eqref{u0-sharp-prop3}. \\
\\
This completes the proof of the proposition
taking  
\begin{equation}
\label{r1-m1-def-2}
\begin{gathered}
r_1(R)\doteq \frac{R}{12},
\\
\noalign{\smallskip}
m_1(r,M)\doteq 2
 \min
 \bigg\{\frac{M}{10},\
\frac{1}{120\cdot \sqrt{N}\cdot  r}\cdot \min\Big\{c_6(36\cdot  r), \, c_7(6\cdot  r)\Big\}
\bigg\},
\\
\noalign{\bigskip}
\tau_6(r,m)\doteq \overline\tau'(r,m/2),
\end{gathered}
\end{equation}
with $\overline\tau'$ as in~\eqref{tbar2-def}, \eqref{tbar3-def}, 
and letting ${k_1(r,m)}$ be
a semiconcavity constant for $S_\tau 0$, $\tau\leq \tau_6(r,m)$
on $I_{r}^N$.
\end{proof}
\smallskip
\begin{remark}
\label{control-prop1}
\rm
By Proposition~\ref{control-part}, for every $r\leq r_1(R)$, $m\leq m_1(r,M)$,
$T\leq \tau_6(r,m)$ and $K=k_1(r,m)$, one has
\begin{equation}
\label{incl-sc}
\widetilde{\mathcal{SC}}^{^{S_T 0, 1}}_{[r,\,2m,\,2K]}\subset S_T(\mathcal L_{[R,M]})\,,
\end{equation}
%
%
where $\mathcal L_{[R,M]}$ is the set  defined  in \eqref{CLM-1} while
\begin{equation}
\label{LSRM-3}
\widetilde{\mathcal{SC}}^{^{S_T 0, 1}}_{[r,\,2m,\,2K]}\doteq 
\Big\lbrace{
u_0\in \mathcal L_{[r,2m]}\, \big| \, \ u_0\!\!\restriction_{[-r,r]^N}\in
\mathcal{SC}^{^{S_T 0, 1}}_{[r,\,2m,\,2K]}, \ \  \ u_0\!\!\restriction_{R^N\setminus [-r,r]^N} = S_T 0
\Big\rbrace},
\end{equation}
with $\mathcal{SC}^{^{S_T 0, 1}}_{[r,\,m,\,K]}$  defined 
as in \eqref{LSRM-1}.
\par \medskip
\noindent
It remains an interesting open problem to analyze the global in time exact controllability of~\eqref{HJ}.
Namely, one would like to determine wether there exist constants $r_1(R,M), m_1(r,M)$ so that, for every time $T>0$
and for any $r\leq r_1(R,M)$, $m\leq m_1(r,M)$, 
there holds~\eqref{incl-sc} for some $K=k_1(r,m,T)$.
\end{remark}

\medskip

\subsection{Conclusion of the proof of Theorem~\ref{upper-lower-estimate}-${\bf (ii)}$}
\label{concl-proof-mainthm}

Given $R,M>0$, let  $r_1(R), m_1(r,M), k_1(r,m)$ and
$\tau_6(r,m)$ be the constants provided by Proposition~\ref{control-part}
and set $r_{\!_R}\doteq r_1(R)$, $m_{\!_{R,M}}\doteq m_1(r_{\!_R},M)$, $K_{\!_{R,M}}\doteq k_1(r_{\!_R},m_{\!_{R,M}})$,
$\tau_{\!_{R,M}}\doteq \tau_6(r_{\!_R},m_{\!_{R,M}})$,
so that, as observed in Remark~\ref{control-prop1}, there holds
\begin{equation}
\label{incl-sc-2}
\widetilde{\mathcal{SC}}^{^{S_T 0, 1}}_{[r_{\!_R},\,2m_{\!_{R,M}},\,2K_{\!_{R,M}}]}\subset 
S_T(\mathcal L_{[R,M]})
\qquad\quad\forall~T\leq \tau_{\!_{R,M}}\,.
\end{equation}
On the other hand, by definition~\eqref{LSRM-3}, one has
\begin{equation}
\label{supp-LSRM-3}
\text{supp}(\varphi)\subset [-r_{\!_R}, r_{\!_R}]^N\qquad\quad \forall~\varphi\in 
\widetilde{\mathcal{SC}}^{^{S_T 0, 1}}_{[r_{\!_R},\,2m_{\!_{R,M}},\,2K_{\!_{R,M}}]}-S_T \,0\,.
\end{equation}
Therefore, relying on~\eqref{incl-sc-2}, \eqref{supp-LSRM-3},
and applying Theorem~\ref{estSC}-(ii), we deduce that, for $T\leq \tau_{\!_{R,M}}$ and
$\varepsilon$ sufficiently small, there holds
\begin{equation}
\label{lower-entr-est-final}
\begin{aligned}
\mathcal{H}_{\epsilon}\Big(S_T(\mathcal{L}_{[R,M]})-S_T \,0\ \big|\ \mathbf{W}^{1,1}(\mathbb{R}^N)\Big)
&\geq
\mathcal{H}_{\epsilon}\Big(\widetilde{\mathcal{SC}}^{^{S_T 0, 1}}_{[r_{\!_R},\,2m_{\!_{R,M}},\,2K_{\!_{R,M}}]}-S_T \,0\ \big|\ \mathbf{W}^{1,1}(\mathbb{R}^N)\Big)
\\
\noalign{\smallskip}
&=
\mathcal{H}_{\epsilon}\Big(\widetilde{\mathcal{SC}}^{^{S_T 0, 1}}_{[r_{\!_R},\,2m_{\!_{R,M}},\,2K_{\!_{R,M}}]}-S_T \,0 \ \big|\ \mathbf{W}^{1,1}([-r_{\!_R}, r_{\!_R}]^N)\Big)
\\
\noalign{\smallskip}
&=\mathcal{H}_{\epsilon}\Big(\widetilde{\mathcal{SC}}^{^{S_T 0, 1}}_{[r_{\!_R},\,2m_{\!_{R,M}},\,2K_{\!_{R,M}}]} \ \big|\ \mathbf{W}^{1,1}([-r_{\!_R}, r_{\!_R}]^N)\Big)
\\
\noalign{\smallskip}
&\geqslant \gamma^-_{_{[l_T,\mu_T,\kappa_T,N]}}\cdot\frac{1}{\varepsilon^N}\,,
\end{aligned}
\end{equation}
where
\begin{equation}
\gamma^-_{_{[r_{\!_R},K_{\!_{R,M}},N]}}=
\frac{1}{8\cdot\ln 2}\cdot \bigg(\frac{K_{\!_{R,M}}\, \omega_N\, r_{\!\!_R}^{N+1}}{48(N+1)\,2^{N+1}}\bigg)^{\!\!N}.
\end{equation}
This completes the proof of the lower bound~\eqref{Lower-est-H}.

\qed
\section*{Acknowledgements}
This work was partially supported by the National Group for Mathematical Analysis and Probability (GNAMPA) of the Istituto Nazionale di Alta Matematica ``Francesco Severi'' (INdAM) and by the  INdAM-CNRS European Research Group (GDRE) on Control of Partial Differential Equations (CONEDP).
Fabio Ancona was partially supported by the Miur-Prin 2012 Project "Nonlinear Hyperbolic Partial Differential Equations, 
Dispersive and Transport Equations: theoretical and applicative aspects" 
and by the University of Padova grant "PRAT 2013 - Traffic Flow on Networks: Analysis and Control".
\bigskip


\begin{thebibliography}{99}


%

\bibitem{APN} F. Ancona, P. Cannarsa, Khai T. Nguyen, Quantitative compactness estimates for Hamilton-Jacobi equations, preprint 2014
(http://arxiv.org/abs/1403.4556).

\bibitem{AON1} F. Ancona, O. Glass and K. T. Nguyen, Lower compactness estimates for scalar balance laws, Comm. Pure Appl. Math 65 (2012), no. 9, 1303-1329.

\bibitem{AON2} F. Ancona, O. Glass and K. T. Nguyen, On compactness estimates for hyperbolic systems of conservation laws, preprint (2014) (http://arxiv.org/abs/1403.5070), to appear in Ann. Inst. H. Poincar\'e Anal. Non Lin\'eaire.

\bibitem{AON3} F. Ancona, O. Glass and K. T. Nguyen, 
On quantitative compactness estimates for hyperbolic conservation laws, 
Hyperbolic problems: theory, numerics and applications.
Proceedings of the 14th International Conference on Hyperbolic Problems (HYP2012), AIMS, Springfield, MO, 2014,
249-257.

\bibitem{BCD} M. Bardi and I. Capuzzo Dolcetta, Optimal control and viscosity solutions of Hamilton-Jacobi
equaitons. Birkh\"auser, Boston (1997).

\bibitem{BCJS} E.N. Barron, P.Cannarsa, R. Jensen and C. Sinestrari, Regularity of Hamilton-Jacobi equations
when forward Is backward, Indiana Univ. Math. J. {\bf 48}, no. 2 (1999), 385-409.

\bibitem{BKP} P. L. Bartlett, S. R. Kulkarni and S.E. Posner, Covering numbers for real-valued function classes. IEEE Trans. Inform. Theory 43 (1997), no. 5, 1721-1724.
 
\bibitem{BT} S. Bianchini and D. Tonon, SBV Regularity for Hamilton-Jacobi equations with Hamiltonian depending
on $(t,x)$, Siam J. Math. Anal. 44, no. 3, 2179-2203. 



\bibitem{BDR} S. Bianchini, C. DeLellis and R.Robyr, SBV Regularity for Hamilton-Jacobi equations in $\R^n$, 
Arch. Ration. Mech. Anal. 200 (2011), 1003-1021.  
 
\bibitem{CF} P.Cannarsa and H.Frankowska, From pointwise to local regularity for solutions of Hamilton-Jacobi equations. 
Calc. Var. Partial Differ. Equ. {\bf 49}  (2014), 1061-1074.  
%

\bibitem{CMS} P.Cannarsa, A. Mennucci and C. Sinestrari, Regularity results for solutions of a
class of  Hamilton-Jacobi equations. 
Arch. Ration. Mech. Anal. 140 (1997), 197-223.  
%


\bibitem{CS} P.Cannarsa and C.Sinestrari, Semiconcave functions,
Hamilton-Jacobi equations, and optimal control. Progress in Nonlinear
Differential Equations and their Applications, 58. Birkh\"auser
Boston, 2004. 
%

\bibitem{CSo} P.Cannarsa and H. Soner, On the singularities of viscosity solutions to Hamilton-Jacobi equations. 
Indiana Univ. Math. J. 36 (1987), 501-524.  
%

\bibitem{C} F.H. Clarke, Optimization and nonsmooth analysis, Wiley, New York, 1983.
\bibitem{CL} M.G. Crandall and P.-L. Lions, Viscosity solutions of Hamilton-Jacobi equations, 
 Trans. Amer. Math. Soc. 277 (1983), no. 1, 1-42.
 
\bibitem{CEL} M.G. Crandall, L.C. Evans and P.-L. Lions, Some properties of viscosity solutions of Hamilton-Jacobi equations, 
 Trans. Amer. Math. Soc. 282 (1984), 487-502.
 
\bibitem{DLG} C. De Lellis and F. Golse, A Quantitative Compactness Estimate for Scalar Conservation Laws, Comm. Pure Appl. Math. 58 (2005), no. 7, 989-998. 
%

\bibitem{E} L.C. Evans, Partial Differential Equations, Graduate Studies in Mathematics, vol. 319, AMS, Providence,
1991.
%

\bibitem{Fl} W. Fleming, The Cauchy problem for a nonlinear first order partial differential equations, 
J. Differential Equations, 5 (1969),  515-530. 
%

\bibitem{Hoeffding} W. Hoeffding, Probability inequalities for sums of bounded random variables. J. Amer. Statist. Assoc. 58 (1963), 13-30. 
%



\bibitem{KT} A.N. Kolmogorov and V.M Tikhomirov, $\varepsilon$-Entropy and $\varepsilon$-capacity of sets in functional spaces.
Uspekhi Mat. Nauk {\bf 14} (1959), 3-86.

%
\bibitem{Lax78} P. D. Lax, Accuracy and resolution in the computation of solutions of linear and nonlinear equations. Recent advances in numerical analysis (Proc. Sympos., Math. Res. Center, Univ. Wisconsin, Madison, Wis., 1978). Publ. Math. Res. Center Univ. Wisconsin, 107--117. Academic Press, New York, 1978.
%


\bibitem{Lax02} P.D. Lax, Course on hyperbolic systems of conservation laws.. XXVII Scuola Estiva di Fis. Mat., Ravello, 2002.

\bibitem{Lin-Tadmor} C.-T. Lin and E. Tadmor, $\mathbf{L}^1$-Stability and error estimates for approximate Hamilton-Jacobi equations, 
Numer. Math. 87 (2001), no. 4,  701Ð735.


%







%
\end{thebibliography}
\end{document}